\newcommand{\cb}{\color{blue}}
\def\R{{\mathbb{R}}}
\numberwithin{equation}{section}
\newtheorem{theor}{Theorem}[section]
\newtheorem{prop}[theor]{Proposition}
\newtheorem{lem}[theor]{Lemma}
\newtheorem{rem}{Remark}[section]
\newcommand{\eps}{\varepsilon}
\numberwithin{equation}{section}
\title{Single point gradient blow-up on the boundary 
for a Hamilton-Jacobi equation with $p$-Laplacian diffusion}
\author{Amal Attouchi and Philippe Souplet}
\date{}
\begin{document}
\maketitle
\begin{abstract}
We study the initial-boundary value problem for the Hamilton-Jacobi equation with nonlinear diffusion
$u_t=\Delta_p u+|\nabla u|^q$ in a two-dimensional domain for $q>p>2$. 
It is known that the spatial derivative of solutions may become unbounded in finite time while the solutions themselves remain bounded. 
We show that, for suitably localized and monotone initial~data, the gradient blow-up occurs at a single point of the boundary.
Such a result was known  up to now only in the case of linear diffusion ($p=2$). The analysis in the case $p>2$ is considerably more delicate.
\end{abstract}

\eject

\tableofcontents

\eject

\section{Introduction}

\subsection{Problem and main result}

This article is a contribution to the study of the influence of nonlinear diffusion on the qualitative
properties of equations of Hamilton-Jacobi type and, in particular, on the formation of finite-time singularity.
More specifically, we consider the following problem

\begin{equation}\label{eqprincipale}
\left\{
\begin{array}{lll}
u_t=\Delta_p u+|\nabla u|^q, \qquad&(x,y)\in\Omega,&t>0,\\ 
              \noalign{\vskip 1mm}
u(x,y,t)=\mu  y,   \qquad&(x,y)\in\partial\Omega,&t>0,\\
              \noalign{\vskip 1mm}
u(x,y,0)=u_0(x,y),\qquad&(x,y)\in\Omega,&
\end{array}
\right.
\end{equation}
where $\Delta_p$ denotes the $p$-Laplace operator, $\Delta_p=\nabla\cdot(|\nabla u|^{p-2}\nabla u)$.
Throughout this paper, we assume that $\mu\ge 0$ is a constant and that
\begin{equation}\label{hyppq}
q>p>2.
\end{equation}

For reasons that will appear later, we restrict ourselves to a class of planar domains~$\Omega$ which satisfy certain geometric properties.
We assume that, for some $L_1,L_2>0$,
\begin{eqnarray}
&&\hbox{$\Omega\subset \R^2$ is a smooth bounded domain
of class $C^{2+\epsilon}$ for some $\epsilon\in (0,1)$}; \label{hypOmega0} \\
&&\hbox{$\Omega$ is symmetric with respect to the axis $x=0$}; \label{hypOmega1} \\
&&\hbox{$\Omega\subset\{y>0\}$ and $\Omega$ contains the rectangle $(-L_1,L_1)\times(0,2L_2)$}; \label{hypOmega2} \\
&&\hbox{$\Omega$ is convex in the $x$-direction}. \label{hypOmega3}
\end{eqnarray}
In particular, by (\ref{hypOmega2}), $\partial\Omega$ has a flat part, centered at the origin $(0,0)$.
Note that assumption (\ref{hypOmega3}) is equivalent to the fact that $\Omega\cap\{y=y_0\}$ is a line segment for each~$y_0$.

The initial data $u_0$ is taken in $\mathcal{V}_\mu$, where 
$$\mathcal{V}_\mu:=\left\{ u_0\in C^1(\overline{\Omega}), \ u_0\geq \mu y\ \text{ in }\Omega,\ u_0=\mu y\ \text{ on }\, \partial\Omega\right\}.$$
We shall use the following notation throughout:
$$\Omega_+:=\left\{(x,y)\in\Omega;\ x>0\right\}.$$
For $T>0$, set $Q_T=\Omega\times (0,T)$,
$S_T=\partial \Omega\times (0,T)$ and 
$\partial_PQ_T=S_T\cup (\overline\Omega\times \{0\})$ its parabolic boundary.

Problem (\ref{eqprincipale}) is well posed locally in time (see Section~2 for details), with blow-up alternative in $W^{1,\infty}$ norm.
For brevity, when no confusion arises, the existence time of its maximal solution $u$ will be denoted by
$$T:=T_{max}(u_0)\le \infty.$$

It is known (see \cite{AmalJDE, LauSti}) that global nonexistence, i.e. $T<\infty$, occurs for suitably large initial data
(more generally, for problem (\ref{eqprincipale}) in an $n$-dimensional bounded domain with Dirichlet boundary conditions).
Note that the condition $q>p$ is sharp, since the solutions are global and bounded in $W^{1,\infty}$ if $1<q\le p$ (see \cite{stin, LauSti}).
Since it follows easily from the maximum principle that $u$ itself remains uniformly bounded on $Q_T$, 
global nonexistence can only occur through {\it gradient blow-up}, namely 
$$\sup_{Q_T}|u|<\infty \quad\text{ and }\quad \lim_{t\to T}\|\nabla u(\cdot,t)\|_\infty=\infty.$$
This is different from the usual blow-up, in which the $L^{\infty}$ norm
of the solution tends to infinity as $t\to T_{max}$,
which occurs for equations with zero-order nonlinearities,  
such as $u_t=\Delta_p u+u^q$ (see \cite{fujiohta,galak}).
The study of --$L^{\infty}$ or gradient-- blow-up singularities, in particular their location, time and spatial structure is  very  much 
of interest for the understanding of the  physical problems modelled by such equation, as well as for the mathematical richness that they involve.
The $L^{\infty}$ blow-up for the equation $u_t=\Delta_p u+u^q$ has been extensively studied,
both in the case of linear ($p=2$) and nonlinear ($p>2$) diffusion; see respectively the monographs \cite{superlinear} and \cite{SGKM87}
and the numerous references therein.

As for equation (\ref{eqprincipale}) with $p=q=2$, it is known  as the
(deterministic version of the) Kardar-Parisi-Zhang (KPZ) equation, describing the profile of a 
growing interface in certain physical models (see \cite{KPZ}), where $u$ then represents the height of the interface profile.
The case of $ p=2$ and $q\geq1$  is a more general model which was developed by Krug and Spohn,
aiming at studying the effect of the nonlinear gradient term on  the 
properties of solutions (see \cite{KS}). Our main interest in this paper  is to study the effect of a quasilinear gradient diffusivity
 on the localization of the singularities.

For the case of linear diffusion $p=2$,
various sufficient conditions for gradient blow-up and global existence were
provided and qualitative properties were
investigated, such as: nature of the blow-up set, rate and profile of blow-up,
maximum existence time and continuation after blow-up, boundedness of global
solutions and convergence to
a stationary state. We refer for these to the works \cite{Arri, guo, vaz, Zhan, superlinear, single} and the references therein.

The case $p>2$ is far from being completely understood and fewer works deal with the nonlinear diffusion.
The large time behavior of global solutions in bounded or unbounded domains has been studied in 
\cite{lauren3, stin, LauSti, bartier, barle, LauCPDE}.
Concerning the asymptotic description of singularities, results on the gradient blow-up rate in one space dimension can be found in \cite{amal1D,zhan1, rate}.
On the other hand, in any space dimension, it is known  \cite{AmalJDE} that gradient blow-up can take place only on the boundary,
i.e. 
$$GBUS(u_0)\subset\partial\Omega,$$
where the gradient blow-up set is defined by
$$GBUS(u_0)=\Bigl\{x_0\in \overline\Omega;\ \hbox{for any $\rho>0$,} \sup_{(\overline\Omega\cap B_\rho(x_0))\times (T-\rho,T)} |\nabla u|=\infty\Bigr\}.$$
Moreover, the following upper bounds for the space profile of the singularity were obtained in  \cite{AmalJDE}:
 \begin{equation}\label{BernsteinBound} 
|\nabla u|\leq C\delta^{-\frac{1}{q-p+1}}\ \ \text{and}\ \ u \leq C\delta^{\frac{q-p}{q-p+1}}\ \ \text{in}\ Q_T,
\ \ \text{where}\ \delta(x)={\rm dist}(x,\partial\Omega),
\end{equation} 
and they are sharp in one space dimension  \cite{amal1D}. (Our nondegeneracy lemma \ref{milouar1}
below indicates that they are also sharp in higher dimensions.)

It is easy to see, by considering radially symmetric
solutions with $\Omega$ being a ball,  that $GBUS(u_0)$ can be the whole of $\partial\Omega$.
A natural question is then:  
\begin{eqnarray*}
&\hbox{Can one produce examples (in more than one space dimension)} \\
&\hbox{when $GBUS(u_0)$ is a proper subset of $\partial\Omega$, especially a single point ?} 
\end{eqnarray*}
The goal of this article is to provide an affirmative answer to this question.
Our main result is the following.

\begin{theor}\label{singlegbu} Assume (\ref{hyppq})--(\ref{hypOmega3}).
\smallskip

(i) For any $\rho\in (0,L_1)$, there exists $\mu_0=\mu_0(p,q,\Omega,\rho)>0$ such that, for any $\mu\in (0,\mu_0]$,
there exist initial data $u_0$ in $\mathcal{V}_\mu\cap C^2(\overline\Omega)$ such that
the corresponding solution $u$ of~(\ref{eqprincipale}) enjoys the following properties:
\begin{equation}\label{don0}
\hbox{ $T:=T_{max}(u_0)<\infty$ and $GBUS(u_0)\subset [-\rho, \rho]\times\left\{0\right\}$,}
\end{equation}
\begin{equation}\label{don3}
\hbox{$u(\cdot,t)$ is symmetric with respect to the line $x=0$, for all $t\in (0,T)$,}
\end{equation}  
\begin{equation}\label{don4}
u_x\leq 0 \quad \text{ in $\Omega_+\times(0,T)$,}
 \end{equation}  
 \begin{equation}\label{don1} 
u_y\geq \mu/2 \quad\text{ in } Q_T.
\end{equation} 
(ii) For any such $\mu$ and $u_0$, we have
$$GBUS(u_0)=\{(0,0)\}.$$
\end{theor}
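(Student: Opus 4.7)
The proof naturally splits along the two parts of the theorem. For part (i), I would take initial data of the form $u_0=\mu y+\phi_0$, where $\phi_0\in C^2(\overline\Omega)$ is nonnegative, even in $x$, vanishes on $\partial\Omega$, is compactly supported in a small neighborhood of $(0,0)$ contained in $(-\rho,\rho)\times[0,L_2)$, and satisfies $\partial_x\phi_0\le 0$ in $\Omega_+$ and $\partial_y\phi_0\ge 0$. Taking $\phi_0$ sufficiently concentrated and large at the origin will force $T<\infty$ via a Kaplan-type eigenfunction argument as in \cite{AmalJDE,LauSti}. The symmetry (\ref{don3}) is immediate from uniqueness applied to $(x,y,t)\mapsto u(-x,y,t)$. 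The monotonicity (\ref{don4}) follows from the maximum principle applied to $w=u_x$ on $\Omega_+$, using that $w=0$ on $\{x=0\}\cup(\partial\Omega\cap\{y=0,\,x>0\})$ by symmetry and the flat boundary data, and that $w\le 0$ on the curved part of $\partial\Omega\cap\{x>0\}$ by the convexity hypothesis (\ref{hypOmega3}) combined with the lower bound (\ref{don1}) on $u_y$.

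For (\ref{don1}), I would first apply Hopf's lemma to the nonnegative function $v=u-\mu y$ on the flat portion of $\partial\Omega$ to obtain $u_y\ge\mu$ there, and then propagate the inequality $u_y\ge\mu/2$ into the interior via the maximum principle on $w=u_y-\mu/2$; smallness of $\mu$ is what guarantees that the boundary value of $u_y$ on the curved part of $\partial\Omega$ stays above $\mu/2$. The localization (\ref{don0}) is then obtained by comparing $u$ outside a small neighborhood of $[-\rho,\rho]\times\{0\}$ with a stationary supersolution of $\Delta_p U+|\nabla U|^q\le 0$ that coincides with $\mu y$ on the relevant portion of $\partial\Omega$ and dominates $u_0$ initially; combined with interior $C^{1,\beta}$-regularity for the $p$-Laplacian, this forces $|\nabla u|$ to remain bounded outside $[-\rho,\rho]\times\{0\}$.

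For part (ii), the plan is a Friedman--McLeod-type argument adapted to the quasilinear setting. I introduce the auxiliary function
\begin{equation*}
J(x,y,t) \;=\; u_x(x,y,t) + c\, x^{\alpha}\, F(u)
\end{equation*}
on $\Omega_+\times(0,T)$, with small $c>0$, $\alpha\ge 1$, and $F$ a positive increasing function whose growth is tuned to the blow-up profile (\ref{BernsteinBound}) (essentially a power $F(u)=u^{\beta}$ with $\beta>1$). The aim is to show $J\le 0$. This requires (a) $J\le 0$ at $t=0$, guaranteed by the support and monotonicity of $\phi_0$; (b) $J\le 0$ on the parabolic boundary of $\Omega_+$, using $J=0$ on $\{x=0\}$ by symmetry, $u_x=0$ on the flat part of $\partial\Omega$, and the convexity (\ref{hypOmega3}) together with (\ref{don1}) on the curved part; and (c) ruling out a positive interior maximum by computing $J_t-\mathcal{L}J$, where $\mathcal{L}$ is the linearization of $\Delta_p+|\nabla\cdot|^q$ at $u$, and absorbing the remainder terms. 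Once $u_x\le -c\,x^{\alpha}F(u)$ is established in $\Omega_+$, integrating in $x$ from $0$ to a fixed $x_0\in(0,L_1)$ yields a time-uniform upper bound on $u(x_0,y,t)$; combined with the $x$-monotonicity and standard interior/boundary regularity for the $p$-Laplacian near $(x_0,0)$ with $x_0\ne 0$, this produces a uniform gradient bound there, hence $GBUS(u_0)=\{(0,0)\}$.

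The hard part will be step~(c): the linearization carries a diffusion coefficient $|\nabla u|^{p-2}$ which degenerates where $\nabla u$ vanishes, and differentiating $|\nabla u|^p$ and $|\nabla u|^q$ in $x$ produces cross terms with no definite sign that cannot be treated as in the semilinear case $p=2$. These must be absorbed using the Bernstein-type bound (\ref{BernsteinBound}) on $|\nabla u|$ together with the lower bound (\ref{don1}) on $u_y$, which keeps $|\nabla u|$ bounded away from zero in the relevant region and thus prevents degeneracy of the diffusion. This is precisely the mechanism that makes the $p>2$ analysis considerably more delicate than the $p=2$ case, as noted in the abstract.
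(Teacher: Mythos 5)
Your overall strategy is the right one (well--prepared data, localization by a stationary supersolution, a Friedman--McLeod type auxiliary function), but several of the steps you describe would fail as stated. In part (i), the main tension you do not resolve is between ``large'' and ``small'': the localization by a supersolution equal to $\mu y$ on the relevant boundary portion, and even more the bound (\ref{don1}), force $u_0$ to be dominated by a barrier of amplitude $O(\mu)$ (in the paper, $u\le\mu(y+U)$ with $|U_y|\le 1/2$), whereas your blow-up mechanism asks $\phi_0$ to be ``large at the origin'' so that a Kaplan-type criterion applies. For admissible data the $L^1$ (or eigenfunction-weighted) mass is tiny, and no direct Kaplan argument gives $T<\infty$; the paper's way out is the scale invariance of the equation: one rescales a blowing-up solution of the Dirichlet problem in a unit ball to a small ball $B_\eps(0,\eps)$ tangent to the boundary, which shows that data of the small amplitude $C_1\eps^{(q-p)/(q-p+1)}$, concentrated at scale $\eps$, already blow up (Lemma~\ref{blowupdata}); this rescaling idea is missing from your proposal. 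Likewise, your route to (\ref{don1}) is unjustified on the upper curved boundary: there $u=\mu y$ and $u\ge\mu y$ inside only give $\partial_\nu u\le\mu\nu_2$, i.e.\ an \emph{upper} bound on $u_y$ where $\nu_2>0$; smallness of $\mu$ alone does not prevent $u_y$ from being very negative there. What controls it is precisely the global barrier $u\le\mu(y+U)$ with $|U_y|\le 1/2$ (Lemmas~\ref{GlobalBarrier}--\ref{uclass2}), i.e.\ the smallness of the data again, and the paper then gets (\ref{soluy}) by a translation comparison $u(x,y+h,t)\ge u(x,y,t)+\frac{\mu}{2}h$ rather than a maximum principle for $u_y$.

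In part (ii) the gaps are more serious. First, your $J=u_x+c\,x^\alpha F(u)$ carries no weight in $y$; integrating $J\le0$ in $x$ then yields only $u(x_0,y,t)\le C(x_0)$, a bound with no decay as $y\to0$, which is already known (by (\ref{minmaxu_0})) and cannot yield a gradient bound near $(x_0,0)$ -- if an $L^\infty$ bound on $u$ plus ``standard regularity'' sufficed, there would be no gradient blow-up at all. The paper's $J=u_x+kx\,y^{-\gamma}u^\alpha$ is designed so that integration gives the \emph{weakly singular} estimate $u\le C(x)\,y^{1-2\sigma}$ with $1-2\sigma>(q-p)/(q-p+1)$, and one then needs two further nontrivial ingredients you omit: the nondegeneracy (removability) Lemma~\ref{milouar1}, proved via a time-regularizing barrier, and the boundary-to-interior gradient control of Lemma~\ref{localgrad} (local Bernstein argument). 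Second, your initial condition ``$J\le0$ at $t=0$'' is false for compactly supported $\phi_0$: away from the support $u_{0,x}=0$ while $c\,x^\alpha F(u_0)\ge c\,x^\alpha(\mu y)^\beta>0$. The paper instead imposes $J\le0$ at $t=T/2$ on a small rectangle $(0,x_1)\times(0,y_1)$, using a quantitative bound $u_x(\cdot,T/2)\le -c_3xy$ obtained from a parabolic version of Serrin's corner lemma (Proposition~\ref{CornerLemma}), and verifies the lateral conditions via Hopf's lemma on $\{x=x_1\}$ and $\{y=y_1\}$, which requires keeping $D^2u$ bounded up to $t=T$ away from the blow-up set (Lemma~\ref{lemRegulBoundaryCondJ}); working on all of $\Omega_+$ as you propose gives no such quantitative lower bound for $-u_x$ on the curved boundary up to time $T$. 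Finally, the ``absorption'' in your step (c) is where the bulk of the work lies: the paper must eliminate $u_{yy}$ through the equation, exploit the bound on $u_t$ and the lower bound (\ref{don1}), and use the Bernstein estimate (\ref{BernsteinBound}) with a careful choice of $\alpha,\gamma,k,y_1$; asserting that the cross terms ``must be absorbed'' does not by itself establish the differential inequality $\mathcal{P}J\le0$.
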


\smallskip

A class of initial data satisfying the requirements of Theorem~\ref{singlegbu} is provided in Lemma~\ref{bonnedonnee} below.
We note that,  in the semilinear case $p=2$, a single-point boundary gradient blow-up result was obtained in \cite{single}. 
Although we follow the same basic 
strategy, the proof here is considerably more complicated.
We point out right away that, in view of property (\ref{don1}), the equation is not degenerate for the solutions under consideration.
However, since the essential goal of this article is to study the effect of nonlinear diffusion 
on gradient blow-up, what is relevant here are the {\it large} values of the gradient in the diffusion operator
(rather than the issues of loss of higher regularity that would arise from the degenerate nature of the equation near the level $\nabla u=0$).
It is an open question whether or not single-point gradient blow-up can still be proved in the case $\mu=0$.
Actually, the lower bound (\ref{don1}) on $|\nabla u|$ is crucially used at various points of the proof, which is already very long and involved,
due to the presence of the nonlinear -- even though nondegenerate -- diffusion term (see the next subsection for more details).

\smallskip

Section~2 is devoted to local in time well-posedness and regularity results.
The proof of Theorem~\ref{singlegbu} will be split into several sections, namely sections 3--7 for assertion (i)
and sections 8--10 for assertion (ii) (the latter uses also section~5).
Finally, in two appendices, we provide the proofs of some regularity properties and a suitable parabolic version of Serrin's corner lemma.
Since the proof is quite long and involved,  for the convenience of readers, we now give an outline
of the main steps of the proof.

\medskip

\subsection{Outline of proof}

For sake of clarity, we have divided the proof into a number of intermediate steps,
each of which being relatively short (one or two pages, say), except for step (f),
which involves long and hard computations.
For the convenience of readers, we outline the structure of the proof.
\smallskip 

(a) {\it Preliminary estimates} (Lemmas~\ref{uclass}--\ref{estimderivtemps}):  
The symmetry in the variable~$x$ and 
the decreasing property for $x>0$
are basic features in order to expect single-point gradient blow-up.
Besides $u$ being bounded, 
we also have boundedness of $u_t$. Moreover, for sufficiently small $\mu$ and under a suitable assumption on $u_0$,
we show that $u_y\ge \mu/2$.
These bounds on $u_t$ and $u_y$ seem necessary in the very long calculations of the key step (f) below.
In turn, the positivity of $u_y$ guarantees that solutions are actually classical and that $D^2u, D^3u$ satisfy some bounds
which seem also necessary to the argument, especially in view of the application of the Hopf lemma and 
the Serrin corner lemma in step (g).

\smallskip
(b) {\it Finite time gradient blow-up for suitably concentrated initial data} (Lemma~\ref{blowupdata}): 
by using a rescaling argument and known blow-up criteria, we show that the solution blows up in finite time provided
the initial data is suitably concentrated in a small ball near the origin.

\smallskip

(c) {\it Local boundary gradient control} (Lemma \ref{localgrad}): if the gradient remains bounded on the boundary near a given boundary point,
then the gradient remains also bounded near that point inside the domain, hence it is not a blow-up point.
This is proved by a local Bernstein type argument. 
\smallskip

(d) {\it Localization of the gradient blow-up set} (Lemma~\ref{localiz}): if an initial data is suitably concentrated near the origin, 
then the gradient blow-up set
is contained in a small neighborhood of the origin. This is proved by constructing comparison functions
which provide a control of the gradient on the boundary outside a small neighborhood of the origin,
and then applying step (c).
One then constructs (Lemma~\ref{bonnedonnee}) initial data which also fulfill the assumptions in (a) and (b).
This ensures the existence of ``well-prepared'' initial data
and thereby completes the proof of assertion (i) of Theorem~\ref{singlegbu}.

\smallskip

(e) {\it Nondegeneracy of gradient blow-up} (Lemma~\ref{milouar1}): 
if the solution is only ``weakly singular'' in a neighborhood of a boundary point,
then the singularity is removable. 
\footnote{In this context, the term ``nondegeneracy'' describes a property of finite-time singularities 
(like in, e.g., Giga and Kohn~\cite{GK89})
and should not be confused with the notion of nondegenerate diffusion mentioned after Theorem~\ref{singlegbu}.}
More precisely,
we show the existence of $m=m(p,q) \in(0,1)$ such that, for a given point $(x_0, 0)$ on the flat part of $\partial\Omega$,
if $u(x,y,t)\le c(x)y^m$ near $(x_0, 0)$ for $t$ close to $T$,
then $(x_0,0)$ is not a gradient blow-up point. In view of step (c), it suffices to control the 
gradient on the boundary near the point~$(x_0, 0)$.
This is achieved by constructing special comparison functions,
taking the form of ``regularizing (in time) barriers''.
\smallskip

(f) {\it Verification of a suitable parabolic inequality for an auxiliary function} $J$, of the form 
$$J(x,y,t)=u_x+ kxy^{-\gamma} u^{\alpha}.$$ 
(Proposition~\ref{PJneg}). This is the most technical step and gives rise to very long computations.
Those computations make use, among many other things,
of the singular, Bernstein-type boundary gradient estimate (\ref{BernsteinBound}), obtained in \cite{AmalJDE}.
They use the bound on $u_t$ and the lower bound on $u_y$, obtained at step (a)
(it is not clear if the latter could be relaxed
\footnote{It seems that the constants in some of the key estimates there are nonuniform as $\mu\to 0^+$,
which prevents us to argue by a limiting procedure from the case $\mu>0$.}).

We note that a similar function $J$ was introduced in \cite{single} to treat the semilinear case $p=2$.
The function in  \cite{single} was a 2D-modification of a one-dimensional device from~\cite{FML}, used there to show single point $L^\infty$ blow-up
for radial solutions of equations of the form $u_t-\Delta_p u=u^q$ (for $p=2$, see also~\cite{galak} for $p>2$).
Although the ideas are related, the calculations here are considerably harder than in \cite{FML, galak, single}.

\smallskip

(g) {\it Verification of initial-boundary conditions for the auxiliary function $J$
in a small subrectangle} near the origin. This requires a delicate
parabolic version of Serrin's corner lemma, which we prove in Appendix~2
(see Proposition~\ref{CornerLemma}).
\smallskip

(h) {\it Derivation of a weakly singular gradient estimate near the origin and conclusion.}
Steps (d) and (e) imply $J\le 0$ by the maximum principle.
By integrating this inequality we obtain an inequality of the form $u(x,y,t)\le c(x)y^k$ as $y\to 0$,
for each small $x\ne 0$ and some $k>m$. In view of step (e), this shows that $(0,0)$ is the only gradient blow-up point.

\smallskip

\section{Local well-posedness and regularity}
In this section we 
 consider the question of local existence and regularity for problem (\ref{eqprincipale}).
 Actually, we consider the slightly more general problem
\begin{eqnarray}
u_t-\Delta_p u&=&|\nabla v|^q \quad\hbox{in $Q_T$}, \label{1a}\\
u&=&g\quad\hbox{on $S_T$}, \label{1b}\\
u(\cdot,0)&=&u_0\quad\hbox{in $\Omega$},\label{1c}
\end{eqnarray}
where the boundary data $g$ and initial data $u_0$ satisfy:
\begin{equation}\label{HypBD}
\hbox{$g\ge 0$ is the trace on $\partial\Omega$ of a regular function in $C^{2+\gamma}(\overline\Omega)$
for some $\gamma\in(0,1)$}
\end{equation}
and
\begin{equation}\label{HypID}
u_0 \in W^{1,\infty}(\Omega),\quad u_0 \ge 0,\quad u_0=g\ \hbox{ on $\partial\Omega$}.
\end{equation}
A function $u$ is called a weak super- (sub-) solution of problem \eqref{1a}--\eqref{1c} on $Q_T$ if
$u(\cdot,0)\geq (\leq) \,u_0$ in $\Omega$, $u\geq  (\leq)\, g$ on $S_T$,
$$u\in C(\overline{\Omega}\times [0,T))\cap L^q(0,T; W^{1,q}(\Omega)),
 \quad u_t\in L^2(0,T; L^2(\Omega))$$
and the integral inequality
$$
 \int\int_{Q_T} u_t \psi+|\nabla u|^{p-2} \nabla u\cdot\nabla \psi \, dx\, dt\geq(\leq)\int\int _{Q_T} |\nabla u|^q \psi \,dx \,dt
$$
holds for all $\psi\in C^0(\overline{Q_T}) \cap L^p(0,T; W^{1,p}(\Omega))$ such that $ \psi\geq 0$ and $\psi=0$ on $S_T$.
A function $u$ is a weak solution of  \eqref{1a}--\eqref{1c}  if it is a super-solution and a sub-solution.

The following result was established in \cite[Theorem~1.1]{AmalJDE} (actually in any space dimension).

\begin{theor}\label{theorsou1} Assume (\ref{hypOmega0}) and $q>p-1>1$. Let $M_1>0$, let $u_0, g$ satisfy \eqref{HypBD}--\eqref{HypID}
and $\|\nabla u_0\|_\infty\le M_1$.
Then:
\begin{enumerate}[(i)]
\item  There exists a time $T _0=T_0(p,q,\Omega,M_1,\|g\|_{C^2})>0$ and a weak solution $u$ of \eqref{1a}--\eqref{1c} on $[0,T_0)$,
which moreover satisfies $u \in L^\infty(0, T_0; W^{1,\infty}(\Omega))$. 
Furthermore, $\nabla u$ is locally H\"older continuous in $Q_{T_0}$.

\item For any $\tau>0$, the problem \eqref{1a}--\eqref{1c} has at most one weak solution $u$ such that
$u \in L^\infty(0, \tau; W^{1,\infty}(\Omega))$. 
\smallskip
\item There exists a (unique) maximal, weak solution of \eqref{1a}--\eqref{1c}
in $L^\infty_{loc}([0, T);$ $W^{1,\infty}(\Omega))$, still denoted by $u$, with existence
time denoted by $T=T_{max}(u_0)$. Then
\begin{equation}\label{minmaxu_0}
\min_{\overline\Omega}u_0\le u\le\max_{\overline\Omega}u_0\quad\hbox{ in $Q_T$},
\end{equation}
$\nabla u$ is locally H\"older continuous in $Q_{T}$ and
$$\hbox{if $T<\infty$, then $\lim_{t\to T} \|\nabla u(t)\|_\infty=\infty$ (gradient blow up, GBU).}$$
\end{enumerate}
\end{theor}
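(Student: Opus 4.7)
The plan is to obtain a non-degenerate approximation of the $p$-Laplacian, solve the regularized problem by classical quasilinear parabolic theory, and pass to the limit via uniform estimates. Writing the $p$-Laplacian in divergence form, I would replace $|\nabla u|^{p-2}$ by the regularized coefficient $a_\eps(|\nabla u|^2):=(|\nabla u|^2+\eps)^{(p-2)/2}$ (for $\eps>0$), and similarly smooth the source $|\nabla u|^q$. This yields a uniformly parabolic quasilinear equation with smooth coefficients, for which the Ladyzenskaja--Solonnikov--Uralceva theory (or equivalently a Schauder fixed point in a parabolic H\"older space $C^{1+\alpha,(1+\alpha)/2}$ obtained by freezing the coefficients around a given iterate) produces a classical solution $u^\eps$ on some initial interval.

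The heart of the argument is a Lipschitz estimate for $u^\eps$ uniform in $\eps$, on an interval $[0,T_0]$ with $T_0$ depending only on $p,q,\Omega,M_1,\|g\|_{C^2}$. First an $L^\infty$ bound follows from the comparison principle (no zero-order term), giving (\ref{minmaxu_0}). For the Lipschitz bound I would use a Bernstein-type argument on $w^\eps:=|\nabla u^\eps|^2$: the parabolic inequality satisfied by $w^\eps$ has good sign in the principal diffusion term, while the bad contribution from $|\nabla u^\eps|^q$ can be absorbed at interior maximum points, so by the maximum principle one is reduced to bounding $w^\eps$ on the parabolic boundary. Interior data come from $M_1$; boundary data require a local barrier built from the $C^2$ extension of $g$ and the smoothness of $\partial\Omega$. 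A suitable ODE comparison of the form $\Phi'\le C\Phi^q$, $\Phi(0)\le C(M_1+\|g\|_{C^2})$, then yields the desired uniform estimate together with the quantitative existence time $T_0$. Once a uniform Lipschitz bound is in hand, DiBenedetto-type local H\"older estimates for gradients of solutions of degenerate parabolic equations give a uniform $C^{\beta,\beta/2}_{loc}$ bound on $\nabla u^\eps$, and Arzel\`a--Ascoli with a diagonal extraction produces a limit $u$ that is a weak solution of (\ref{1a})--(\ref{1c}) with the stated regularity, proving~(i).

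For uniqueness (ii), given two solutions $u_1,u_2$ in $L^\infty(0,\tau;W^{1,\infty})$, I would test the difference of the weak formulations against $u_1-u_2$, exploit the monotonicity inequality $(|\xi|^{p-2}\xi-|\eta|^{p-2}\eta)\cdot(\xi-\eta)\ge 0$ to discard the (nonnegative) diffusion contribution, and control the source difference $|\nabla u_1|^q-|\nabla u_2|^q$ by a uniform Lipschitz constant times $|\nabla u_1-\nabla u_2|$, followed by integration by parts and Gronwall. The maximal solution and the gradient blow-up alternative in (iii) then follow by a standard reiteration: if $T<\infty$ but $\|\nabla u(t)\|_\infty$ stays bounded as $t\to T^-$, one could apply the local existence result from some $t_0$ close to $T$ with initial Lipschitz bound $\|\nabla u(t_0)\|_\infty$, obtaining an extension beyond $T$ and contradicting maximality. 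The main obstacle is the uniform-in-$\eps$ Lipschitz control up to the boundary: unlike the semilinear case $p=2$, the superlinear source $|\nabla u|^q$ with $q>p-1$ competes with diffusion that behaves like $|\nabla u|^{p-2}$ for large $\nabla u$, and the sign of this competition at maximum points of $w^\eps$, together with a boundary barrier that is stable as $\eps\to 0$, is what genuinely fixes $T_0$ and couples it quantitatively to $M_1$ and~$\|g\|_{C^2}$.
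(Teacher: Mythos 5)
The paper does not actually prove Theorem~\ref{theorsou1}: it is quoted verbatim from \cite[Theorem~1.1]{AmalJDE} (and the comparison principle of Remark~\ref{remsou1} from \cite[Proposition~2.1]{AmalJDE}), so there is no in-paper argument to compare yours against. Your outline for part (i) -- regularize $|\nabla u|^{p-2}$ into $(|\nabla u|^2+\eps)^{(p-2)/2}$, solve by Ladyzenskaja--Solonnikov--Uralceva theory, prove an $\eps$-uniform Lipschitz bound on a time interval $T_0(p,q,\Omega,M_1,\|g\|_{C^2})$, and pass to the limit using DiBenedetto-type interior gradient H\"older estimates -- is the standard route and is broadly consistent with how such results are obtained (the cited proof works by approximation/fixed point with Bernstein-type gradient bounds). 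Two caveats on (i): the interior Bernstein step is indeed benign (the differentiated source only enters through $\nabla w^\eps$, which vanishes at an interior maximum, cf.\ Lemma~\ref{tech} of this paper), so the entire quantitative content sits in the boundary gradient barrier; your ODE $\Phi'\le C\Phi^q$ is asserted rather than derived, and since for $q>p$ no time-independent barrier can work (this is exactly the gradient blow-up mechanism), the construction of a time-dependent barrier whose lifespan yields $T_0$ in terms of $M_1$ and $\|g\|_{C^2}$ is the one step you would actually have to write out.

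The genuine gap is in (ii). For $p>2$ the scheme ``discard the monotone diffusion term and Gronwall'' does not close. The monotonicity $(|\xi|^{p-2}\xi-|\eta|^{p-2}\eta)\cdot(\xi-\eta)\ge c_p|\xi-\eta|^p$ gives coercivity only of order $\int|\nabla(u_1-u_2)|^p$ (degenerate where both gradients are small, so no $\int|\nabla(u_1-u_2)|^2$ term is available), and the source difference is only controlled by $C\int|\nabla(u_1-u_2)|\,|u_1-u_2|$. Young's inequality against the $p$-coercivity leaves $\int|u_1-u_2|^{p/(p-1)}$ with exponent $p/(p-1)<2$, so for $y(t)=\|u_1-u_2\|_2^2$ one only gets $y'\le Cy^{p'/2}$ with $p'/2<1$; this non-Osgood inequality with $y(0)=0$ admits nonzero solutions, so no uniqueness follows. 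The alternative you mention, integrating by parts the linearized term $b\cdot\nabla(u_1-u_2)\,(u_1-u_2)$, requires $\operatorname{div}b$, i.e.\ second derivatives of $u_1,u_2$, which are not available in the class $L^\infty(0,\tau;W^{1,\infty}(\Omega))$. So the comparison/uniqueness statement needs a genuinely different argument (this is precisely what \cite[Proposition~2.1, Theorem~1.1(ii)]{AmalJDE} supplies), and as written your step (ii) -- and hence the construction of the maximal solution in (iii), although the continuation/blow-up alternative itself is standard once (i)--(ii) are in hand -- does not go through.
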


\begin{rem}\label{remsou1} 
We also have a comparison principle for problem \eqref{1a}--\eqref{1c}, cf.~\cite[Proposition 2.1]{AmalJDE}.
More precisely, if $v_1,v_2\in C(\overline Q_T)$ are weak sub-/super-solutions of  \eqref{1a}--\eqref{1c} in $Q_T$, then
$$\sup_{Q_T} \ (v_1-v_2)\leq \sup_{\partial Q_T}\ (v_1-v_2).$$
\end{rem}

As one expects, the solution will possess additional regularity if
we know that $|\nabla u|$ remains bounded away from $0$.
This is made precise by the following result, which is a consequence of regularity theory \cite{lady,lieb}
for quasilinear uniformly parabolic equations.
However, for completeness, we provide a proof in Appendix~1.

\begin{theor}\label{theorsou2} 
Under the assumptions of Theorem \ref{theorsou1}, suppose also that $\inf_{Q_T}|\nabla u|>0$. 

(i) Then $u$ is a classical solution in $Q_T$ and
\begin{equation}\label{uC21}
u\in C^{2+\alpha,1+\alpha/2}_{loc}(\overline\Omega\times (0,T)).
\end{equation}
for some $\alpha\in (0,1)$.

(ii) Moreover,
\begin{equation}\label{nablauC21}
\nabla u\in C^{2+\beta,1+\beta/2}_{loc}(\Omega\times (0,T)).
\end{equation}
for some $\beta\in (0,1)$.

(iii) If the boundary conditions in \eqref{1b} depend only on $y$, then
\begin{equation}\label{uxC21}
u_x\in C^{2+\beta,1+\beta/2}_{loc}(\overline\Omega\times (0,T)).
\end{equation}
for some $\beta\in (0,1)$.
\end{theor}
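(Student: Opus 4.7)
The plan is to exploit the hypothesis $\inf_{Q_T}|\nabla u|>0$ to turn~\eqref{1a} into a linear, uniformly parabolic equation with H\"older continuous coefficients, and then to apply classical Schauder theory as in~\cite{lady,lieb}. Expanding the $p$-Laplacian gives
$$\Delta_p u=a_{ij}(\nabla u)\,u_{ij},\qquad a_{ij}(\xi)=|\xi|^{p-4}\bigl(|\xi|^2\delta_{ij}+(p-2)\xi_i\xi_j\bigr),$$
where the matrix $(a_{ij})$ is smooth and uniformly elliptic on the set $\{|\xi|\ge c\}$ for any $c>0$. Since Theorem~\ref{theorsou1} already provides local H\"older continuity of $\nabla u$ on $Q_T$, both $a_{ij}(\nabla u)$ and the source $|\nabla u|^q$ are H\"older continuous there, and $u$ satisfies the linear parabolic equation
$$u_t-a_{ij}(\nabla u)\,u_{ij}=|\nabla u|^q.$$

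For~(i), I would combine interior Schauder estimates on subdomains $\Omega'\Subset\Omega$ with boundary Schauder estimates in a neighborhood of $\partial\Omega\times[\tau,T-\tau]$ for arbitrary $\tau>0$. The hypotheses \eqref{HypBD} and \eqref{hypOmega0} supply $g\in C^{2+\gamma}(\overline\Omega)$ and $\partial\Omega\in C^{2+\epsilon}$; since we stay away from $t=0$, no initial compatibility conditions are needed. This yields~\eqref{uC21}.

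For~(ii), I differentiate the equation in $x_\ell$: letting $v:=u_{x_\ell}$, a direct computation gives the linear parabolic equation
$$v_t-a_{ij}(\nabla u)\,v_{ij}-\bigl[(\partial_{\xi_k}a_{ij})(\nabla u)\,u_{ij}+q|\nabla u|^{q-2}u_k\bigr]v_k=0,$$
whose coefficients are H\"older continuous on $Q_T$ thanks to~(i). Interior Schauder, applied for each $\ell$ and followed by a short bootstrap, then yields~\eqref{nablauC21}.

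For~(iii), the key step is to identify the trace of $u_x$ on $\partial\Omega$. When $g=g(y)$, tangential differentiation of the identity $u=g(y)$ along $\partial\Omega$ gives, for any unit tangent $\tau=(\tau_1,\tau_2)$,
$$\tau_1\,u_x+\tau_2\,u_y=\tau_2\,g'(y)\quad\text{on }\partial\Omega.$$
On the flat portion of $\partial\Omega$ (where $\tau_2=0$) this reduces to $u_x\equiv 0$; on the curved parts, $u_x\big|_{\partial\Omega}$ is determined by $u_y$, $g'$ and the geometry of $\partial\Omega$, and thus inherits regularity from~(i). Boundary Schauder estimates applied to the linear equation for $u_x$ from~(ii), together with a bootstrap using the increasing regularity of $\nabla u$, then give~\eqref{uxC21}. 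The main technical obstacle is precisely this up-to-the-boundary bootstrap: one must verify that the trace of $u_x$ on $\partial\Omega$ carries enough smoothness to support the $C^{2+\beta,1+\beta/2}$ estimate, most likely by working locally and exploiting the fact that on the flat part of $\partial\Omega$ the trace $u_x\equiv 0$ is trivially smooth.
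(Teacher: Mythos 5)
The central gap is in part~(i): you treat $u$ as if it were already a strong solution of the nondivergence-form equation $u_t-a_{ij}(\nabla u)u_{ij}=|\nabla u|^q$ and then ``apply'' Schauder theory to it. Under the hypotheses of Theorem~\ref{theorsou1}, however, $u$ is only a weak solution of the divergence-form problem, with $\nabla u\in L^\infty$ and $u_t\in L^2$; it is not known to possess second space derivatives, so the expansion of $\Delta_p u$ and the subsequent use of Schauder estimates (which are a priori estimates for solutions already known to be classical or strong, or must be combined with an existence-plus-uniqueness identification) are unjustified at this stage. Moreover, the boundary half of your argument requires $a_{ij}(\nabla u)$ and $|\nabla u|^q$ to be H\"older continuous up to $\partial\Omega$, whereas Theorem~\ref{theorsou1} gives H\"older continuity of $\nabla u$ only locally \emph{inside} $Q_T$; assuming it up to the boundary is circular, since boundary regularity of $\nabla u$ is part of what \eqref{uC21} delivers. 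The paper's proof of~(i) is built precisely to avoid both issues: it truncates the nonlinearities ($b(s)=s^{(p-2)/2}$ and $F(s)=s^{q/2}$ modified outside $[\delta_0^2,M_\tau^2]$ so that the quasilinear problem becomes uniformly parabolic with bounded data), invokes the quasilinear existence theory of \cite[Chapter~V]{lady} to obtain a classical solution $v\in C^{2+\alpha,1+\alpha/2}(\overline\Omega\times(0,\tau))$ with the same initial-boundary data, and identifies $v$ with $u$ through the uniqueness of weak solutions in $L^\infty(0,\tau;W^{1,\infty}(\Omega))$ (Theorem~\ref{theorsou1}(ii)). Some such existence-and-identification (or difference-quotient) mechanism is indispensable and is absent from your proposal.

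The same difficulty recurs in (ii) and (iii): writing a linear parabolic equation for $u_{x_\ell}$ presupposes third derivatives of $u$, which must be produced, not assumed. In the paper this is done in Lemma~\ref{lemRegulLoc} by difference quotients: first $D^2u\in L^2_{loc}$, so $u$ is a local strong solution of the nondivergence equation and interior Schauder applies; then interior $L^r$ estimates on difference quotients of that equation give $D^3u,\,Du_t\in L^r_{loc}$, after which $\partial_\ell u_x$ is a strong solution of the differentiated equation and Schauder upgrades it to $C^{2+\beta,1+\beta/2}_{loc}$. For (iii) you correctly note that only the tangential part of the boundary condition is inherited by $u_x$, and you explicitly flag --- but do not resolve --- the smoothness of the trace of $u_x$ on $\partial\Omega$, which is exactly the step that needs proof; the paper instead uses the vanishing of $u_x$ on $S_T$, cuts off in time and solves the resulting linear problem with zero initial-boundary data via \cite[Theorem~4.28]{lieb}, identifying that solution with $\theta u_x$ by uniqueness of classical solutions. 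As written, your (i) rests on unavailable hypotheses and your (iii) is an acknowledged open step rather than an argument.
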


\section{Preliminary estimates: $x$-symmetry, lower bound on $u_y$ and  bound on~$u_t$}

{\bf Notation.}\ Throughout the paper, we shall use the summation convention on repeated indices,
in expressions of the form $a_{ij}u_{ij}$ or $a_{ij}u_iu_j$.
Also, the letter $C$ will denote positive constants which may vary from line to line,
and whose dependence will be indicated if necessary.

\begin{lem}\label{uclass}
Let $\mu>0$ and $u_0\in\mathcal{V}_\mu$.
Assume
\begin{eqnarray}
&\hbox{$u_0$ is symmetric with respect to the line $x=0$}, \label{don00a} \\
&\hbox{$\partial_x u_0\le 0$ in $\Omega_+$}. \label{don0a}
\end{eqnarray}
Then we have
\begin{equation}\label{compuh0}
u(x,y,t)\geq \mu y \quad\text{in }Q_T.
\end{equation}
and properties (\ref{don3})-(\ref{don4}) are satisfied.
\end{lem}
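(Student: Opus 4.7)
The conclusion of the lemma splits into three independent pieces, each obtainable from a comparison-type argument.

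To prove (\ref{compuh0}), the plan is to use $v(x,y,t) := \mu y$ as an explicit sub-solution. Since $\nabla v = (0,\mu)$ is constant, $\Delta_p v \equiv 0$, hence $v_t - \Delta_p v - |\nabla v|^q = -\mu^q \le 0$; moreover $v = \mu y$ on $\partial\Omega$ matches the boundary data of $u$, and $v = \mu y \le u_0$ in $\overline\Omega$ by the definition of $\mathcal{V}_\mu$. The comparison principle (Remark~\ref{remsou1}) then yields $u \ge v = \mu y$ throughout $Q_T$.

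For the symmetry property (\ref{don3}) my plan is to argue by uniqueness. Set $\tilde u(x,y,t) := u(-x,y,t)$. By (\ref{hypOmega1}) the domain is invariant under $x\mapsto -x$, the PDE and the boundary datum $\mu y$ are likewise invariant, and $u_0$ is invariant by (\ref{don00a}). Hence $\tilde u$ is a weak solution of (\ref{eqprincipale}) with the same initial and boundary data as $u$, and Theorem~\ref{theorsou1}(ii) gives $\tilde u \equiv u$.

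The monotonicity (\ref{don4}) I would obtain by a reflection (moving-plane) argument, which avoids any need to differentiate the degenerate PDE. For each $\lambda\ge 0$, set $\Omega_\lambda := \Omega \cap\{x>\lambda\}$ and define the reflected function $u^\lambda(x,y,t) := u(2\lambda-x,y,t)$. Symmetry and $x$-convexity of $\Omega$ (assumptions (\ref{hypOmega1}), (\ref{hypOmega3})) ensure that $(2\lambda-x,y) \in \Omega$ whenever $(x,y)\in\Omega_\lambda$, so $u^\lambda$ is well-defined, and by invariance of $\Delta_p$ and of $|\nabla\cdot|^q$ under $x\mapsto -x$ it is a weak solution of the same PDE on $\Omega_\lambda\times(0,T)$. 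One then checks $u\le u^\lambda$ on the parabolic boundary of this cylinder: equality on $\{x=\lambda\}\cap\overline\Omega$; on $\partial\Omega \cap\{x>\lambda\}$ one has $u=\mu y$ while $u^\lambda\ge \mu y$ by the already-established (\ref{compuh0}); and at $t=0$, since $|2\lambda-x|\le x$ for $x>\lambda$, the symmetry and monotonicity of $u_0$ in (\ref{don00a})--(\ref{don0a}) give $u_0(2\lambda - x, y) \ge u_0(x,y)$. A comparison argument applied on the parabolic cylinder $\Omega_\lambda\times(0,T)$ then yields $u\le u^\lambda$ throughout. Finally, for arbitrary $0\le x_1<x_2$ with $(x_i,y)\in\Omega$, choosing $\lambda:=(x_1+x_2)/2$ gives $u(x_2,y,t)\le u(x_1,y,t)$; hence $u$ is non-increasing in $x$ on $\Omega_+$, and the H\"older continuity of $\nabla u$ from Theorem~\ref{theorsou1} upgrades this to the pointwise bound $u_x\le 0$.

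The main difficulty lies in this last step: the comparison principle of Remark~\ref{remsou1}/Proposition~2.1 of \cite{AmalJDE} is stated on $Q_T$, while the reflection argument needs it on the sub-cylinder $\Omega_\lambda\times(0,T)$, where the natural ``boundary data'' on $\{x=\lambda\}$ is not explicit. However, the proof of that comparison principle rests on the monotonicity of $\xi\mapsto|\xi|^{p-2}\xi$ and on pointwise manipulations of the weak formulation, both of which localize verbatim to any parabolic sub-cylinder; I would expect this to add only bookkeeping, not any essentially new idea.
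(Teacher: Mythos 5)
Your proof is correct and follows essentially the same route as the paper: $\mu y$ as an explicit subsolution for (\ref{compuh0}), local-in-time uniqueness for the symmetry (\ref{don3}), and a comparison argument on sub-cylinders for the monotonicity (\ref{don4}). The only cosmetic difference is that for (\ref{don4}) the paper compares the two translates $u(x+h,y,t)\le u(x-h,y,t)$ on $\Omega_h=\{(x,y)\in\Omega_+:\ (x+h,y)\in\Omega\}$ instead of $u$ with its reflection across $x=\lambda$; both versions rest on exactly the same ingredients ($u\ge\mu y$, the $x$-symmetry, the $x$-convexity of $\Omega$) and on applying the comparison principle of Remark~\ref{remsou1} on a sub-cylinder, a point which you flag explicitly and which the paper uses without further comment.
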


\begin{proof}
Property (\ref{don3}) is a direct consequence of (\ref{don00a}) and the local-in-time uniqueness.
Due to the assumption $u_0\in\mathcal{V}_\mu$, $\underline{v}=\mu y$ is a subsolution of \eqref{eqprincipale}.
This implies (\ref{compuh0}).

To prove (\ref{don4}), fix $h>0$ and let 
$$u_\pm=u(x\pm h,y,t)\quad\hbox{ for $(x,y)\in \Omega_h:=
\{(x,y)\in \Omega_+;\ (x+h,y)\in \Omega\}$ and $t\in (0,T)$.}$$
Owing to (\ref{hypOmega1}) and (\ref{hypOmega3}), we see that $(x-h,y)\in \Omega$ for all $(x,y)\in \Omega_h$,
so that $u_-$ is well defined.
The functions $u_\pm$ are weak solutions of (\ref{eqprincipale})$_1$ in $\Omega_h\times (0,T)$.
Also $u_+\le u_-$ at $t=0$, due to (\ref{don00a})-(\ref{don0a}) and (\ref{hypOmega3}).

Let $(x,y)\in \partial\Omega_h$. If $x=0$, then 
$u_+(x,y,t)=u(h,y,t)=u(-h,y,t)=u_-(x,y,t)$ by~(\ref{don3}).
If $x>0$, then $(x+h,y)\in \partial\Omega$ as a consequence of $(x,y)\in \partial\Omega_h$ and (\ref{hypOmega3}).
So, by (\ref{compuh0}), we have
$$
u_+(x,y,t)=u(x+h,y,t)=\mu y \le u_-(x-h,y,t).
$$
We deduce from the comparison principle that $u_+\le u_-$ in $\Omega_h\times (0,T)$,
which implies~(\ref{don4}).
\end{proof}

Our next lemma provides a useful supersolution of problem \eqref{eqprincipale}.

\begin{lem}\label{GlobalBarrier}
For $0<\rho<L_1$, denote $\Sigma_\rho=[-\rho/2,\rho/2]\times \{0\}$ and 
$\Sigma'_\rho=\partial\Omega\setminus ((-\rho,\rho)\times \{0\})$.
There exist $\mu_0=\mu_0(p,q,\Omega,\rho)>0$ and a function $U\in C^2(\overline\Omega)$,
depending on $p,q,\rho$, with the following properties:
\begin{eqnarray}
&&U>0\quad\text{ on  $\Omega\cup\Sigma_\rho$}, \label{proppsi1} \\
&&U=0\quad\text{ on  $\Sigma'_\rho$},  \label{proppsi2} \\
&&|U_y|\le 1/2\quad\text{ in  $\Omega$} \label{proppsi3}
\end{eqnarray}
and, for all $0<\mu\le\mu_0$, the function $\bar U=\mu (y+U)$ satisfies
\begin{equation} \label{auxilbarU}
-\Delta_p \bar U\ge |\nabla \bar U|^q
\quad\hbox{in $\Omega$}.
\end{equation}
\end{lem}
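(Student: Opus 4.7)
The plan is a perturbation argument exploiting the small parameter $\mu$ and building $U$ from the linearization of $-\Delta_p$ around the background gradient $(0,1)$ contributed by the term $\mu y$ in $\bar U$. I seek $U = \lambda\phi$ with $\lambda > 0$ small and $\phi$ a fixed auxiliary function. The key observation is that at $\nabla v = (0,1)$ the $p$-Laplacian linearizes to the uniformly elliptic, constant-coefficient operator $L := \partial_{xx}^2 + (p-1)\partial_{yy}^2$, so a supersolution of $-L\phi \ge 1$ will translate, for $\lambda$ small, into a supersolution of $-\Delta_p(y+\lambda\phi) \ge \lambda/2$.

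Since $\Sigma_\rho$ and $\Sigma'_\rho$ are disjoint closed subsets of the smooth curve $\partial\Omega$, one picks $h \in C^\infty(\partial\Omega)$ with $h \equiv 1$ on $\Sigma_\rho$ and $h \equiv 0$ on $\Sigma'_\rho$. Define $\phi$ as the solution in $C^{2,\alpha}(\overline\Omega)$ to the linear Dirichlet problem $-L\phi = 1$ in $\Omega$, $\phi = h$ on $\partial\Omega$, whose existence and regularity follow from classical Schauder theory (constant coefficients and $\partial\Omega \in C^{2+\epsilon}$). The strong maximum principle ensures $\phi > 0$ in $\Omega$, so setting $U = \lambda\phi$ with $\lambda \le 1/(2\|\phi\|_{C^1(\overline\Omega)})$ inherits \eqref{proppsi1}, \eqref{proppsi2} from $\phi$ and $h$, while automatically enforcing \eqref{proppsi3}.

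It remains to verify \eqref{auxilbarU}. Writing $v := y + U$, one has $\nabla v = (\lambda\phi_x,\, 1 + \lambda\phi_y)$ with $|\nabla v| \in [1/2, 3/2]$, so $v$ falls within the nondegenerate regime of $-\Delta_p$ and the pointwise formula
$$\Delta_p v = |\nabla v|^{p-4} \bigl( |\nabla v|^2\, \Delta v + (p-2)\, \nabla v \cdot D^2 v\, \nabla v \bigr)$$
applies. Expanding the right-hand side in powers of $\lambda$ around $(0,1)$, using $\Delta v = \lambda \Delta \phi$ and $\nabla v \cdot D^2 v\, \nabla v = \lambda \phi_{yy} + O(\lambda^2)$, one obtains
$$-\Delta_p v = -\lambda L\phi + O(\lambda^2) = \lambda + O(\lambda^2),$$
with implied constant depending only on $p$ and $\|\phi\|_{C^2(\overline\Omega)}$. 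Shrinking $\lambda$ once more to absorb the remainder yields $-\Delta_p v \ge \lambda/2$ in $\Omega$. Then the scaling identity $\Delta_p(\mu v) = \mu^{p-1} \Delta_p v$ and the upper bound $|\nabla \bar U|^q \le (3/2)^q \mu^q$ reduce \eqref{auxilbarU} to $\mu^{p-1}\lambda/2 \ge (3/2)^q\mu^q$, i.e.\ $\mu^{q-p+1} \le c\lambda$ for an explicit constant $c = c(p,q)$. Since $q - p + 1 > 0$, this defines the required threshold $\mu_0 > 0$.

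The main obstacle is the pointwise expansion of $\Delta_p(y + \lambda\phi)$ in the third step: one must keep the $O(\lambda^2)$ remainder explicit and uniformly controlled over $\Omega$ by $\|\phi\|_{C^2}$ and $p$, so that $\lambda$ can be frozen independently of, and before, the much smaller parameter $\mu$. Once that elementary but careful Taylor analysis is in place, all remaining ingredients are standard linear elliptic theory and the homogeneity of $\Delta_p$.
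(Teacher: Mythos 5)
Your proposal is correct and follows essentially the same route as the paper: the paper also sets $U=\eps V$ where $V$ solves $-\bigl[V_{xx}+(p-1)V_{yy}\bigr]=1$ with cut-off boundary data equal to $1$ on $\Sigma_\rho$ and $0$ on $\Sigma'_\rho$, uses the strong maximum principle for positivity, takes $\eps$ small to get \eqref{proppsi3} and to absorb the $O(\eps^2)$ terms in the expansion of $\Delta_p(y+\eps V)$ around the gradient $(0,1)$, and finally exploits $q>p$ together with $|\nabla\bar U|\le C\mu$ to choose $\mu_0$. Only two harmless details: take the boundary datum $h$ nonnegative (as the paper does) so the maximum principle indeed yields $\phi>0$ in $\Omega$, and your bound $|\nabla v|\le 3/2$ should be, say, $2$, which only changes the explicit constant in the threshold for $\mu_0$.
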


\begin{proof}
Fix a nonnegative function $\phi\in C^3(\R^2)$ such that $\phi=1$ on $\Sigma_\rho$ and $\phi=0$ on $\Sigma'_\rho$.
We shall look for $U$ under the form $U=\eps V$, 
$\eps>0$, where $V\in C^2(\overline\Omega)$ is the classical solution of the linear elliptic problem
\begin{equation} \label{auxilvarphi}
\left\{
\begin{array}{lll}
-\bigl[V_{xx}+(p-1)V_{yy}\bigr] =1 \qquad&\hbox{in $\Omega$,}\\
              \noalign{\vskip 1mm}
V=\phi \qquad&\hbox{on $\partial\Omega$.}
\end{array}
\right.
\end{equation}
Note that $V>0$ in $\Omega$ by the strong maximum principle which,
along with the boundary conditions in \eqref{auxilvarphi}, will guarantee \eqref{proppsi1}-\eqref{proppsi2}.
Let $\bar U=\mu (y+U)$. 
Assume $0<\eps<1/(\|V_x\|_\infty+2\|V_y\|_\infty)$, 
which implies \eqref{proppsi3}, as well as $|1+\eps V_y|\ge 1/2$
and $|\nabla \bar U|\le 2\mu$.
To check \eqref{auxilbarU}, we compute:
$$
\begin{aligned}
\Delta_p \bar U
&=|\nabla \bar U|^{p-2}\Bigl[\Delta \bar U+(p-2)\dfrac{\bar U_i\bar U_j\bar U_{ij}}{|\nabla \bar U|^2}\Bigr]\\
&=\mu\eps|\nabla \bar U|^{p-2}\Bigl[V_{xx}+V_{yy}+
(p-2)\dfrac{(\eps V_x)^2V_{xx}+2(\eps V_x)(1+\eps V_y)V_{xy}
+(1+\eps V_y)^2V_{yy}}{(1+\eps V_y)^2+(\eps V_x)^2}\Bigr]\\
&=\mu\eps|\nabla \bar U|^{p-2}\Bigl[V_{xx}+(p-1)V_{yy}+
(p-2)\dfrac{(\eps V_x)^2(V_{xx}-V_{yy})+2(\eps V_x)(1+\eps V_y) V_{xy}}{(1+\eps V_y)^2+(\eps V_x)^2}\Bigr]\\
&\le\mu\eps |\nabla \bar U|^{p-2}\Bigl[-1+C\eps^2+C\eps\Bigr]\\
\end{aligned}
$$
 in $\Omega$, with $C>0$ depending only on $\Omega,\rho, p$ (through $V$).
We may then choose $\eps$ depending only on $\Omega,\rho, p$, such that
$-\Delta_p \bar U\ge \frac{\mu\eps}{2}|\nabla \bar U|^{p-2}.$
Next, since $q>p$, for any $\mu\le \mu_0$ with $\mu_0=\mu_0(p,q,\Omega,\rho)>0$ sufficiently small, we have
$$-\Delta_p \bar U\ge \frac{\mu\eps}{2}|\nabla \bar U|^{p-2}\ge \frac{\mu\eps}{2(2\mu)^{q+2-p}}|\nabla \bar U|^q\ge |\nabla \bar U|^q
\quad\hbox{in $\Omega$}.$$
\end{proof}

Based on Lemma \ref{GlobalBarrier}, we construct a class of solutions such that $u_y$ satisfies a positive lower bound.

\begin{lem}\label{uclass2}
Let $0<\rho<L_1$ and let $\mu_0, \bar U$ be given by Lemma \ref{GlobalBarrier}. 
Assume that $0<\mu\le\mu_0$ and $u_0\in\mathcal{V}_\mu$ satisfy
\begin{equation}\label{don0b2}
u_0(x,y)\le\mu \bigl(y+c \chi_{(-\rho/2,\rho/2)\times (0,L_2)}\bigr) 
\quad\text{in }\Omega,
\end{equation}
with $c=c(p,q,\Omega,\rho)>0$ sufficiently small.

(i) Then $u\le \bar U$ in $Q_T$.

(ii) Assume in addition that 
\begin{equation}\label{donuy}
\partial_y u_0\geq \mu/2 \quad\text{in }\Omega.
\end{equation}
Then
\begin{equation} \label{soluy}
\partial_y u\geq \mu/2\quad\text{in } Q_T.
\end{equation}
\end{lem}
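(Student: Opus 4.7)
The plan is a direct comparison with the stationary supersolution $\bar U=\mu(y+U)$ provided by Lemma~\ref{GlobalBarrier}. Since $\bar U_t\equiv 0$, the estimate \eqref{auxilbarU} says precisely that $\bar U$ is a supersolution of the evolution problem, so it suffices to verify $u\le \bar U$ on the parabolic boundary of $Q_T$. On $S_T$ one has $u=\mu y\le \mu y+\mu U=\bar U$ thanks to $U\ge 0$ on $\partial\Omega$ (from \eqref{proppsi1}-\eqref{proppsi2}). At $t=0$, hypothesis \eqref{don0b2} reduces the matter to showing $U\ge c$ on the compact set $K=[-\rho/2,\rho/2]\times[0,L_2]$, which lies inside $\Omega\cup\Sigma_\rho$ by \eqref{hypOmega2} and $\rho<L_1$. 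Property \eqref{proppsi1} gives $c_0:=\min_K U>0$, so any choice $c\in(0,c_0]$ does the job, and the comparison principle (Remark~\ref{remsou1}) delivers $u\le \bar U$ in $Q_T$.

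\textbf{Part (ii).} To propagate the lower bound on $u_y$ I would use a $y$-translation argument built on (i). For small $h>0$ set $\Omega^h:=\{(x,y)\in\Omega : (x,y+h)\in\Omega\}$ and on $\overline{\Omega^h}\times[0,T)$ define
$$w_1(x,y,t)=u(x,y+h,t),\qquad w_2(x,y,t)=u(x,y,t)+\mu h/2.$$
Both are weak solutions of \eqref{1a} on $\Omega^h\times(0,T)$, the equation being invariant under $y$-translations and under addition of constants. It then suffices to check $w_1\ge w_2$ on the parabolic boundary of $\Omega^h\times(0,T)$; dividing the resulting inequality $u(x,y+h,t)-u(x,y,t)\ge \mu h/2$ by $h$ and sending $h\to 0^+$, using the continuity of $\nabla u$ from Theorem~\ref{theorsou1}, will yield \eqref{soluy}.

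The inequality $w_1\ge w_2$ at $t=0$ is \eqref{donuy} integrated. On the lateral boundary of $\Omega^h\times(0,T)$ there are two cases. Case (a), $(x,y)\in\partial\Omega$: then $w_2=\mu y+\mu h/2$ while $w_1\ge \mu(y+h)$ by \eqref{compuh0}, giving $w_1-w_2\ge \mu h/2$. Case (b), $(x,y)\in\Omega$ with $(x,y+h)\in\partial\Omega$: here $w_1=\mu(y+h)$ and one must show $u(x,y,t)\le \mu(y+h/2)$. By (i), $u(x,y,t)\le \bar U(x,y)=\mu(y+U(x,y))$, so it remains to prove $U(x,y)\le h/2$. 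Since $(x,y)\in\Omega$ forces $y>0$, the endpoint $(x,y+h)$ lies on $\partial\Omega\setminus((-\rho,\rho)\times\{0\})=\Sigma'_\rho$, hence $U(x,y+h)=0$ by \eqref{proppsi2}; together with $|U_y|\le 1/2$ from \eqref{proppsi3}, this gives $U(x,y)=-\int_y^{y+h} U_s\,ds\le h/2$, as required.

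\textbf{Main obstacle.} The delicate point is case (b): one needs $u$ to be at most $\mu y$ with an error no larger than $\mu h/2$ \emph{uniformly in time} in an $h$-neighbourhood of the upper part of $\partial\Omega$. Nothing in \eqref{eqprincipale} provides this directly, and a crude continuity argument based only on $u=\mu y$ on $\partial\Omega$ would lose uniformity in $t$. The resolution is the time-independent barrier of Lemma~\ref{GlobalBarrier}, which is engineered so that the slope bound \eqref{proppsi3} converts the boundary identity $U=0$ on $\Sigma'_\rho$ into a usable $O(h)$ interior estimate; this is precisely why (i) is proved before (ii) and why the auxiliary function $U$ was built with \eqref{proppsi3} in mind.
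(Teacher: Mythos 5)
Your overall strategy coincides with the paper's: part (i) is the same comparison with $\bar U$ after taking $c$ below $\min_{[-\rho/2,\rho/2]\times[0,L_2]}U>0$, and part (ii) is the same $y$-translation comparison with shift $\mu h/2$, with the barrier of Lemma~\ref{GlobalBarrier} and the slope bound \eqref{proppsi3} carrying the delicate boundary case. There is, however, a gap at the two places where you integrate along a vertical segment: $\Omega$ is only assumed convex in the $x$-direction (\eqref{hypOmega3}), so a vertical segment joining two points of $\overline\Omega$ may leave $\Omega$, and the hypotheses \eqref{hypOmega0}--\eqref{hypOmega3} do allow such domains.

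Concretely, in your case (b) you write $U(x,y)=-\int_y^{y+h}U_s\,ds$ with $(x,y)\in\Omega$ and $(x,y+h)\in\partial\Omega$; if the segment $\{(x,y+s):\,0\le s\le h\}$ exits $\Omega$, then $U_s$ is undefined along part of it and the bound $|U_y|\le 1/2$ (valid only in $\Omega$) cannot be invoked, so the identity is meaningless as written. The repair is the paper's device: take the minimal $\tilde h\in(0,h]$ with $(x,y+\tilde h)\in\partial\Omega$, so the segment up to $\tilde h$ stays in $\Omega$; since $y+\tilde h>0$, the hitting point lies in $\Sigma'_\rho$, hence $U(x,y+\tilde h)=0$, and the mean value theorem gives $U(x,y)\le \tilde h/2\le h/2$. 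The same issue affects your one-line claim that the initial comparison ``is \eqref{donuy} integrated'': for $(x,y)\in\Omega^h$ whose vertical segment to $(x,y+h)$ leaves $\Omega$, \eqref{donuy} alone does not yield $u_0(x,y+h)\ge u_0(x,y)+\mu h/2$; one must also use $u_0=\mu y$ on $\partial\Omega$ and $u_0\ge\mu y$ to see that $y\mapsto u_0(x,y)-(\mu/2)y$ is nondecreasing along the whole (possibly disconnected) vertical section --- a point the paper explicitly flags as not being a mere consequence of \eqref{donuy} when $\Omega$ is nonconvex. With these two repairs your argument is exactly the paper's proof.
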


\begin{proof}
(i) Let $\bar U=\mu (y+U)$ be given by Lemma \ref{GlobalBarrier}.
From \eqref{proppsi1}, we know that
$$c:=\displaystyle\min_{[-\rho/2,\rho/2]\times [0,L_2]} U>0.$$
Under assumption \eqref{don0b2}, we thus have $u_0\le \bar U$ in $\Omega$.
Since $u=\mu y\le\bar U$ on $S_T$,
we infer from the comparison principle that $u\le \bar U$ in $Q_T$.

\smallskip
(ii) Set $\delta_0=\mu/2$, fix $h>0$ and let $\tilde\Omega_h=\{(x,y)\in \Omega;\ (x,y+h)\in \Omega\}$. We observe that 
$$u_1:=u(x,y,t)+\delta_0 h\quad\text{and}\quad \text u_2:=u(x, y+h,t)$$
are weak solutions of (\ref{eqprincipale})$_1$ in $\tilde\Omega_h\times (0,T)$.

Let $(x,y)\in \partial\tilde\Omega_h$. If $(x,y)\in \partial\Omega$, then, by (\ref{compuh0}), we have
$$
u_2(x,y,t)=u(x,y+h,t)\ge \mu(y+h) =u(x,y,t)+\mu h\ge u_1(x,y,t).
$$
Otherwise, we have $(x,y)\in \Omega$ and $(x,y+h)\in \partial\Omega$. So there is a minimal $\tilde h\in (0,h]$ such that
$(x,y+\tilde h)\in \partial\Omega$. By the mean-value inequality, it follows that,
for some $\theta\in (0,1)$,
$$U(x,y)=U(x,y+\tilde h)-\tilde h U_y(x,y+\theta \tilde h)\leq |U_y(x,y+\theta \tilde h)|h\le h/2,$$
where we used \eqref{proppsi3}.
Therefore, $\bar U(x,y)\le \mu(y+h/2)$, hence
$$
u(x,y+h,t)-u(x,y,t)\geq \mu(y+h)-\bar U(x,y)\ge \frac{\mu h}{2}.
$$
We have thus proved that
\begin{equation}\label{compuh3}
u_2\ge u_1 \quad\text{on } \partial\tilde\Omega_h.
\end{equation}

On the other hand, using (\ref{donuy}) and the fact that $u_0=\mu y$ on $\partial\Omega$, 
it is not difficult to show that $y\mapsto u_0(x,y)-\delta_0 y$ is nondecreasing in $\Omega$.
(Note that in case $\Omega$ is nonconvex, this is not a mere consequence of (\ref{donuy}) alone).
It follows that
\begin{equation}\label{compuh4}
u(x,y+h,0)\ge u(x,y,0)+\delta_0 h \quad\text{in}\ \tilde\Omega_h.
\end{equation}
Owing to (\ref{compuh3})-(\ref{compuh4}), we may then apply the comparison principle
to deduce that $u_2\ge u_1$ in $\tilde\Omega_h\times (0,T)$. 
Since $h$ is arbitrary, the desired conclusion (\ref{soluy}) follows immediately.
\end{proof}

Assuming that $u_0$ is sufficiently regular, we also get an estimate on the time derivative. 

\begin{lem}\label{estimderivtemps}
Let $\mu\ge 0$ and assume that $u_0\in\mathcal{V}_\mu\cap C^2(\overline{\Omega})$.
Then 
 \begin{equation}
|u_t|\leq  \tilde{C}_1 :=\|\Delta_p u_0+|\nabla u_0|^q\|_\infty \quad\text{in }Q_T.  
\end{equation}
\end{lem}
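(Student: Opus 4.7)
The plan is to derive the bound $|u_t|\le \tilde C_1$ via a two--step comparison argument which avoids any differentiation of the equation in time (thereby requiring no regularity beyond what Theorem~\ref{theorsou1} already supplies). Set $M:=\tilde C_1$; this quantity is finite because $u_0\in C^2(\overline\Omega)$ and, for $p>2$, one has $\Delta_p u_0=|\nabla u_0|^{p-2}\Delta u_0+(p-2)|\nabla u_0|^{p-4}(u_0)_i(u_0)_j(u_0)_{ij}$, and both terms are bounded by $C|\nabla u_0|^{p-2}\to 0$ at critical points, so $\Delta_p u_0\in C(\overline\Omega)$.

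\smallskip
\textbf{Step 1: explicit time--affine barriers.} I would first show that $\bar u(x,y,t):=u_0(x,y)+Mt$ and $\underline u(x,y,t):=u_0(x,y)-Mt$ are respectively a classical super-- and subsolution of \eqref{eqprincipale}. Since $\Delta_p\bar u=\Delta_p u_0$ and $|\nabla \bar u|^q=|\nabla u_0|^q$ are time--independent,
$$\bar u_t-\Delta_p\bar u-|\nabla\bar u|^q=M-\bigl(\Delta_p u_0+|\nabla u_0|^q\bigr)\ge 0,$$
and symmetrically for $\underline u$. The initial and boundary conditions match since $\bar u(\cdot,0)=\underline u(\cdot,0)=u_0$ and $\underline u|_{\partial\Omega}=\mu y-Mt\le \mu y\le \mu y+Mt=\bar u|_{\partial\Omega}$. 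The comparison principle (Remark~\ref{remsou1}) then yields
$$|u(x,y,t)-u_0(x,y)|\le Mt \quad\hbox{in $Q_T$.}$$

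\smallskip
\textbf{Step 2: bootstrap to arbitrary time increments.} For $h\in(0,T)$, I would compare the shifted function $v(x,y,t):=u(x,y,t+h)$ with $u(x,y,t)+Mh$ on $Q_{T-h}$. Both are weak solutions of \eqref{1a}--\eqref{1b}, because the equation is autonomous in $t$ and invariant under adding a constant to $u$. On the lateral boundary one has $v=\mu y\le \mu y+Mh$, and at $t=0$ one has $v(\cdot,0)=u(\cdot,h)\le u_0+Mh$ by Step~1. Remark~\ref{remsou1} therefore gives $u(x,y,t+h)\le u(x,y,t)+Mh$, and the analogous comparison against $u-Mh$ produces the reverse inequality. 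Hence $u(x,y,\cdot)$ is $M$--Lipschitz in $t$, uniformly on $\overline\Omega$, so $|u_t|\le M=\tilde C_1$ in the almost--everywhere sense (and classically under the supplementary hypotheses of Theorem~\ref{theorsou2}).

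\smallskip
The only delicate point I anticipate is checking that $\bar u$, $\underline u$, and the shifted function $v$ all qualify as admissible weak sub-/super-solutions in the sense of the class preceding Remark~\ref{remsou1}; this is routine from $u_0\in C^2(\overline\Omega)$ together with the regularity of $u$ on $[h,T)$ coming from Theorem~\ref{theorsou1}(iii). The key virtue of this strategy is that it never requires one to linearize the degenerate $p$--Laplacian or to differentiate the equation in $t$, which would otherwise force a separate approximation argument.
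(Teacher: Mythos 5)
Your proposal is correct and follows essentially the same route as the paper: first the time--affine barriers $u_0\pm\tilde C_1 t$ combined with the comparison principle to get $|u-u_0|\le \tilde C_1 t$, then comparison of the time--shifted solution with $u\pm\tilde C_1 h$ on $Q_{T-h}$ to obtain the uniform Lipschitz bound in time. The paper's proof is exactly this two--step argument, so there is nothing to add.
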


\begin{proof}
 It is easy to see that $v_\pm(x,y,t):=u_0(x,y)\pm\tilde{C}_1t$ are respectively super- and sub-solution of 
\eqref{eqprincipale} in $Q_T$. The comparison principle implies that
\begin{equation} \label{compu0}
u_0(x,y)-\tilde{C}_1 t\leq u(x,y,t)\leq u_0(x,y)+\tilde{C}_1 t\quad\text{in } Q_T.
\end{equation}
Now fix $h\in (0,T)$ and set $w_\pm(x,y,t):=u(x,y,t+h)\pm\tilde{C}_1h$.
By (\ref{compu0}), we have $w_-(x,y,0)\le u_0(x,y)\le w_+(x,y,0)$ and it follows that 
$w_\pm$ are respectively super- and sub-solution of~\eqref{eqprincipale} in $Q_{T-h}$.
By a further application of the comparison principle, we deduce that
$$|u(x,y,t+h)-u(x,y,t)|\le \tilde{C}_1 h\quad\text{in } Q_{T-h}.$$
Since $h$ is arbitrary, we conclude by dividing by $h$ and sending $h\to 0$.
\end{proof}

\section{Finite-time gradient  blow-up for concentrated initial data}

In this section, by a rescaling argument, we show that the solution of \eqref{eqprincipale} blows up in finite time provided
the initial data is suitably concentrated in a small ball near the origin.
For such concentrated initial data, under some additional assumptions, we will show in section~6 that
the gradient blow-up set is contained in a small neighborhood of the origin.

The following lemma shows in particular that gradient blow-up may occur
for initial data of arbitrarily small $L^\infty$-norm 
(but the $W^{1,\infty}$ norm has to be sufficiently large).
We note that we do not assume (\ref{donuy}) here, so that in the proof, we work only with weak solutions
(in particular, we cannot use the continuity of $\nabla u$ up to the boundary).

\begin{lem}\label{blowupdata}
Let $\kappa =(q-p)/(q-p+1)$. There exists $C_1=C_1(p,q)>0$ such that,
if $\eps\in (0,\min(L_1,L_2))$, $\mu\ge 0$ and $u_0\in \mathcal{V}_\mu$ satisfies
\begin{equation}\label{blowupdatacond}
u_0(x,y)\ge C_1\eps^\kappa \quad\text{in }B_{\eps/3}(0,\eps)\subset\Omega,
\end{equation}
then $T_{max}(u_0)<\infty$. 
\end{lem}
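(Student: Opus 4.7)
The approach is a parabolic rescaling matched to the self-similar exponent $\kappa=(q-p)/(q-p+1)$, combined with a known fixed-scale blow-up criterion; the scaling is suggested by the Bernstein bound \eqref{BernsteinBound}, where $u\sim\delta^\kappa$.

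I would first set $\sigma:=p-(p-2)\kappa>0$ and introduce
$$v(\xi,\eta,\tau):=\eps^{-\kappa}\,u(\eps\xi,\eps\eta,\eps^{\sigma}\tau).$$
Using the identity $\kappa(q-p+1)=q-p$, a direct substitution shows that $v$ is a weak solution of $v_\tau=\Delta_p v+|\nabla v|^q$ on the dilated domain $\Omega_\eps:=\Omega/\eps$, with Dirichlet trace $v=\mu\eps^{1-\kappa}\eta$ on $\partial\Omega_\eps$ and initial datum $v_0(\xi,\eta)=\eps^{-\kappa}u_0(\eps\xi,\eps\eta)$. The concentration hypothesis \eqref{blowupdatacond} rescales to the $\eps$-free lower bound $v_0\ge C_1$ on $B_{1/3}(0,1)$. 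Since $\eps<\min(L_1,L_2)$, hypothesis \eqref{hypOmega2} ensures that $\Omega_\eps$ contains the rectangle $(-1,1)\times(0,2)$, hence also the closed disk $\overline{B_{1/2}(0,1)}$, which sits at positive distance from both $\partial\Omega_\eps$ and the axis $\{\eta=0\}$. Moreover, since $\underline{v}:=\mu y$ is a weak subsolution of \eqref{eqprincipale} and $u_0\in\mathcal{V}_\mu$, the comparison principle (Remark~\ref{remsou1}) gives $u\ge\mu y\ge 0$ in $Q_T$, so $v\ge 0$ on all of $\Omega_\eps$.

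I would then invoke a finite-time blow-up criterion for the Dirichlet problem $w_\tau-\Delta_p w=|\nabla w|^q$ on the fixed disk $B_{1/2}(0,1)$: since $q>p>2$, there exists $C_1=C_1(p,q)>0$ such that any nonnegative initial datum with $w_0\ge C_1$ on $B_{1/3}(0,1)$ produces a solution with finite maximal existence time (the type of criterion established in \cite{AmalJDE,LauSti} via Kaplan-type weighted integral identities supported in a fixed inner subregion). Applied directly to the restriction of $v$ to the parabolic cylinder over $B_{1/2}(0,1)$, whose lateral boundary lies in $\{v\ge 0\}$, this forces $T_{max}(v_0)<\infty$, and unwinding the rescaling yields $T_{max}(u_0)=\eps^\sigma\,T_{max}(v_0)<\infty$.

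The main conceptual obstacle is that $u$ stays $L^\infty$-bounded throughout its lifetime (by comparison with the constant $\max u_0$), so the breakdown at $T_{max}$ is gradient blow-up rather than an $L^\infty$ singularity. Consequently, naive pointwise comparison of $v$ with an auxiliary Dirichlet solution $w$ on $B_{1/2}(0,1)$ does not transfer gradient blow-up from $w$ to $v$. The criterion must therefore be applied directly to $v$ through an integral functional that is monotone under pointwise lower bounds on the data and whose support lies in the compactly contained disk $B_{1/3}(0,1)$; this locality is exactly what makes the threshold $C_1$ depend only on $(p,q)$ after rescaling.
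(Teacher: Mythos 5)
Your rescaling and geometric setup are fine (and indeed $\sigma=p-(p-2)\kappa=(2q-p)/(q-p+1)$ matches the natural scaling), but the final step is a genuine gap. The blow-up criterion you invoke is a criterion for the \emph{Dirichlet} problem on $B_{1/2}(0,1)$, whereas the restriction of $v$ to the cylinder over $B_{1/2}(0,1)$ is not a solution of that problem: all you know on the lateral boundary is $v\ge 0$ (in fact $v$ can be as large as $\eps^{-\kappa}\|u_0\|_\infty$ there). No criterion of the kind you describe --- ``interior largeness of the data forces finite $T_{max}$, regardless of boundary values'' --- can exist for this equation: any large constant $A\ge C_1$ is a global, smooth solution of $w_t=\Delta_p w+|\nabla w|^q$ on the cylinder, is $\ge C_1$ on $B_{1/3}(0,1)$ at $t=0$ and nonnegative on the lateral boundary, and never develops any singularity. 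The same obstruction appears at the technical level: the Kaplan-type functional argument behind \cite[Theorem~1.4]{AmalJDE} needs to bound $\int|\nabla w|^q\varphi$ from below by a superlinear function of $\int w\varphi$, which requires $w$ to vanish (or be controlled) on the boundary of the region carrying $\varphi$; since adding a constant to $w$ leaves the left-hand side unchanged but increases the right-hand side, no such inequality can hold with $\varphi$ compactly supported and only $w\ge0$ on the boundary. Gradient blow-up here is a boundary-layer phenomenon created by the pinning $u=\mu y$ on $\partial\Omega$; interior largeness alone cannot trigger it, so your ``direct application'' of the criterion to $v$, and hence the conclusion $T_{max}(u_0)=\eps^\sigma T_{max}(v_0)<\infty$, does not follow.

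The paper closes exactly this gap differently: it constructs an auxiliary radially symmetric solution $v$ of the zero-Dirichlet problem on the unit ball (blowing up in finite time by the $L^1$ criterion), rescales it to $v_\eps$ on the ball $B_\eps(0,\eps)\subset\Omega$ \emph{tangent to $\partial\Omega$ at the origin}, and uses $v_\eps\le u$ together with $u(0,0,t)=0$ to get the difference-quotient bound $v_\eps(0,h,t)/h\le\|\nabla u(\cdot,t)\|_\infty$ at the tangency point; a translation-in-$y$ comparison combined with the radial monotonicity of $v_\eps$ then upgrades this to $\|\nabla v_\eps\|_{L^\infty(Q_\tau)}\le\max\bigl(\|\nabla v_\eps(\cdot,0)\|_\infty,\|\nabla u\|_{L^\infty(Q_\tau)}\bigr)$, so the gradient blow-up of $v_\eps$ forces $T_{max}(u_0)\le T_\eps<\infty$. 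Some such mechanism transferring the singularity through a common boundary point where both solutions vanish (or an equivalent device) is indispensable; without it your argument stops short of the conclusion.
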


\begin{proof}
We denote $B_r=B_r(0,0)\subset\R^2$ for $r>0$.
Fix a radially symmetric function $h\in C^\infty_0(B_1)$, $h\ge 0$,
such that ${\rm supp}(h)\subset B_{1/3}$ and $\|h\|_\infty=1$.
We consider the following problem
\begin{equation}\label{pbpourbu}
\left\{
\begin{array}{lll}
v_t-\Delta_p v=|\nabla v|^q,&x\in B_1, &t>0\\
v(x,y,t)=0,& x\in \partial B_1, &t>0\\
v(x,y,0)=v_0(x,y):= C_1h(x,y), &x\in B_1.& 
\end{array}
\right.
\end{equation}
We know from \cite[Theorem~1.4]{AmalJDE} that there exists $C_0= C_0(p,q)>0$ such that if $\|v_0\|_{L^1}\geq C_0$, then $T_{max}(v_0)<\infty$. 
Therefore, if we take $C_1=C_1(p,q)>0$ large enough then $\nabla v$ blows up in finite time in $L^\infty$ norm.

Next we use the scale invariance of the equation, considering the rescaled functions 
$$v_{\eps}(x,y,t):=\eps^{\kappa} v\left(\dfrac{x}{\eps}, \dfrac{y-\eps}{\eps}, \dfrac{t}{\eps^{(2q-p)/(q-p+1)}}\right).$$
Pick $\eps\in (0,\min(L_1,L_2))$ and denote $\tilde B_{\eps}=B_{\eps}(0,\eps)$, 
which is included in $\Omega$ and tangent to $\partial \Omega$ at the origin. 
Set 
$T_{\eps}=\eps^{(2q-p)/(q-p+1)}T_{max}(v_0)$ and $\tilde{T}_\eps=\min \bigl(T_{max}(u_0), T_{\eps}\bigr)$.
We shall show that, for each $\tau\in (0, \tilde{T}_\eps)$,
\begin{equation}\label{goalNableveps}
 \|\nabla v_\eps\|_{L^\infty(Q_\tau)}
\le \max \Bigl(\|\nabla v_\eps(\cdot,0)\|_\infty, \|\nabla u\|_{L^\infty(Q_\tau)} \Bigr).
\end{equation}
Since gradient blow-up occurs in finite time $T_{\eps}$ for $v_\eps$, this will guarantee $T_{max}(u_0)\leq T_{\eps}<\infty$.

First observe that $v_{\eps}$ solves \eqref{pbpourbu} in $\tilde B_{\eps}\times (0, T_{\eps})$,
with initial data $v_{\eps}(x,y,0)\leq u_0(x,y)$, due to (\ref{blowupdatacond}).
It follows from the comparison principle that
$v_{\eps} \le u$ in $\tilde B_{\eps}\times(0, \tilde{T}_\eps)$.
In particular, for each $h\in (0,\eps)$ and $t\in (0, \tilde{T}_\eps)$, since $u(0,0,t)=0$,
we get that
\begin{equation}\label{hous}
 \dfrac{v_{\eps}(0,h,t)}{h} \le \dfrac{u(0,h,t)}{h}\le \| \nabla u(\cdot,t)\|_\infty.
\end{equation}
We shall next show that the 
first quantity in (\ref{hous}) can be suitably bounded from below in terms of the sup norm of $\nabla v_\eps$.

Fix $h\in (0,\eps)$ and let $\tilde B_{\eps}^h:=B_{\eps}(0,\eps-h)$
Since $v_{\eps}^h(x,y,t):=v_{\eps}(x, y+h,t)$ is a solution of \eqref{pbpourbu} in $\tilde B_{\eps}^h$,
it follows from the comparison principle (see Remark~\ref{remsou1}) that, for any $0<\tau<\tilde T_{\eps}$,
\begin{equation}\label{hous2}
\aligned 
&\displaystyle\sup_{(\tilde B_{\eps}\cap \tilde B_{\eps}^h)\times(0, \tau)}|v_{\eps}(x, y+h,t)-v_{\eps}(x, y, t)|
\qquad\qquad\\
&\leq \max\left(\underset{\tilde B_{\eps}\cap \tilde B_{\eps}^h}{\sup}\, |v_{\eps}^h(x,y, 0)-v_{\eps}(x, y,0)|,
\underset{\partial (\tilde B_{\eps}\cap \tilde B_{\eps}^h)\times(0, \tau)}{\sup}\, |v_{\eps}^h(x, y, t)-v_{\eps}(x,y,t)| \right).
\endaligned 
\end{equation}

We claim that, for any $0<t<T_\eps$,
\begin{equation}\label{hous3}
\underset{\partial (\tilde B_{\eps}\cap \tilde B_{\eps}^h)}{\sup}\, |v_{\eps}^h(x, y, t)-v_{\eps}(x,y,t)|
\le 
v_{\eps}(0,h,t).
\end{equation}
First consider the case $(x,y)\in\partial \tilde B_{\eps}$. Then $\|(x,y+h)-(0,\eps)\|\ge \|(0,h)-(0,\eps)\|$, due to
$$x^2+(y+h-\eps)^2-(h-\eps)^2
=\eps^2-(y-\eps)^2+(y+h-\eps)^2-(h-\eps)^2=2hy\ge 0.$$
Since $v_{\eps}$ is radially symmetric and non-increasing
with respect to the point $(0,\eps)$, we deduce that
$$|v_{\eps} (x,y+h,t)-v_{\eps}(x,y, t)|=v_{\eps}(x,y+h,t)\leq v_{\eps}(0,h, t).$$
Next consider the case $(x,y)\in\partial \tilde B^h_{\eps}$ that is, $(x, y+h)\in\partial \tilde B_{\eps}$. Then
$\|(x,y)-(0,\eps)\|\ge \|(0,h)-(0,\eps)\|$, due to
$$x^2+(y-\eps)^2-(h-\eps)^2=\eps^2-(y+h-\eps)^2+(y-\eps)^2-(h-\eps)^2
=2h(2\eps-h-y)\ge 0.$$
Therefore, $|v_{\eps} (x,y+h,t)-v_{\eps}(x,y, t)|=v_{\eps}(x,y, t)\leq v_{\eps}(0,h,t)$,
and the claim~(\ref{hous3}) is proved.

Now fix $y\in [0,\eps)$. It follows from (\ref{hous})-(\ref{hous3}) that, for each $0<h<\eps-y$ and $0<t<\tau<\tilde T_{\eps}$,
$$
\dfrac{|v_{\eps}(0,y+h,t)-v_{\eps}(0,y,t)|}{h}
\le \max \Bigl(\|\nabla v_\eps(\cdot,0)\|_\infty, \|\nabla u\|_{L^\infty(Q_\tau)} \Bigr)$$
hence, letting $h\to 0$,
$$
|\partial_y v_\eps(0,y,t)|
\le \max \Bigl(\|\nabla v_\eps(\cdot,0)\|_\infty,  \|\nabla u\|_{L^\infty(Q_\tau)} \Bigr).
$$
For each $0<\tau<\tilde T_{\eps}$, taking supremum over $y\in [0,\eps)$ and $t\in (0,\tau)$ and using the fact that $v_\eps$ is radially symmetric, we obtain
(\ref{goalNableveps}). This concludes the proof of the lemma.
\end{proof}

\section{Local boundary control for the gradient
and localization of the gradient blow-up set}

For simplicity we shall here assume \eqref{don0b2} and (\ref{donuy}), so as to have the continuity of $\nabla u$ up to the boundary
(although one might possibly relax this assumption at the expense of additional work).

\begin{lem}\label{localgrad}
Let $\rho, \mu, u_0$ be as in Lemma~\ref{uclass2}(ii) and
let $(x_0,y_0)\in\partial \Omega$.
If there exist $M_0, R>0$ such that 
\begin{equation}\label{hyplocalgrad}
|\nabla u|\leq M_0 \qquad\text{in }(B_R(x_0,y_0)\cap\partial\Omega)\times [0,T_{max}(u_0)),
\end{equation}
then $(x_0, y_0)$ is not a gradient blow-up point.
\end{lem}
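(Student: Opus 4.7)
The goal is a local Bernstein-type estimate: we want to show that the assumption \eqref{hyplocalgrad} of a boundary gradient bound on $B_R(x_0,y_0)\cap\partial\Omega$ propagates to a full (interior $+$ boundary) gradient bound on a slightly smaller set $B_{R/2}(x_0,y_0)\cap\overline\Omega$, up to the maximal time $T$. Since $u_0\in\mathcal{V}_\mu\cap C^2(\overline\Omega)$ and $u_y\ge \mu/2$ (by Lemma~\ref{uclass2}(ii)), Theorem~\ref{theorsou2} tells us that $u$ is a classical solution and $u\in C^{2+\alpha,1+\alpha/2}_{loc}(\overline\Omega\times (0,T))$, so it is legitimate to differentiate the equation and work pointwise.

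Write the PDE in nondivergence form: since $|\nabla u|\ge \mu/2>0$, we have $\Delta_p u = a_{ij}(\nabla u)\,u_{ij}$ with $a_{ij}(\xi)=|\xi|^{p-2}\bigl(\delta_{ij}+(p-2)\,\xi_i\xi_j/|\xi|^2\bigr)$, which is uniformly elliptic on compact subsets of $\{|\xi|\in[\mu/2,M]\}$. Fix a cutoff $\eta\in C^2_c(B_R(x_0,y_0))$ with $0\le\eta\le 1$ and $\eta\equiv 1$ on $B_{R/2}(x_0,y_0)$, and consider an auxiliary function of the form
\begin{equation*}
w(x,y,t)=\eta^{2\ell}(x,y)\,|\nabla u|^2+\lambda\bigl(u-\mu y\bigr)^2,
\end{equation*}
with exponent $\ell$ and constant $\lambda>0$ to be chosen. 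Differentiating $u_t=a_{ij}(\nabla u)u_{ij}+|\nabla u|^q$ in $x_k$, multiplying by $2u_k$ and summing, yields a parabolic identity for $|\nabla u|^2$ of the schematic form
\begin{equation*}
\partial_t|\nabla u|^2 - a_{ij}(\nabla u)\bigl(|\nabla u|^2\bigr)_{ij}
+ 2a_{ij}u_{ik}u_{jk}
= 2q|\nabla u|^{q-2}u_j u_k u_{jk} + (\text{third-order cross terms from }\partial_k a_{ij}),
\end{equation*}
and after a further reorganization the ``good'' Bernstein term $-2a_{ij}u_{ik}u_{jk}$, which is $\ge c|\nabla u|^{p-2}|D^2u|^2$, appears on the left-hand side.

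At a spatio-temporal maximum $(x_*,y_*,t_*)$ of $w$ in $\bigl(B_R(x_0,y_0)\cap\overline\Omega\bigr)\times[0,T)$ we apply the maximum principle: $\partial_t w\ge 0$, $\nabla w=0$ and $D^2w\le 0$. Using $\nabla w=0$ to eliminate $\nabla(|\nabla u|^2)$ in favor of $\nabla\eta$, and bounding all third-order terms by Cauchy--Schwarz against the good Bernstein term (this is where $|\nabla u|\ge \mu/2$ is crucial, in order to keep $a_{ij}$ nondegenerate), one arrives at an inequality of the schematic form
\begin{equation*}
c_0|\nabla u|^{q+p-2}\eta^{2\ell}\le C\bigl(|\nabla\eta|^2+\eta|D^2\eta|\bigr)|\nabla u|^{p} + \lambda\,C(u,u_y)\,|\nabla u|^{p}+\text{l.o.t.},
\end{equation*}
at the maximum point. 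Since $q>p$ the left-hand side grows strictly faster in $|\nabla u|$ than the right-hand side, so $|\nabla u|(x_*,y_*,t_*)$ is bounded by a constant depending only on $p,q,\Omega,R,\mu,\|u_0\|_\infty$. By comparing with the values of $w$ on the parabolic boundary of $\bigl(B_R(x_0,y_0)\cap\overline\Omega\bigr)\times[0,T)$---namely $w\le C$ at $t=0$ by the $C^2$-regularity of $u_0$, $w=\lambda(u-\mu y)^2\le C$ on $\partial B_R\cap\Omega$ thanks to $\eta=0$ and the boundedness of $u$ from \eqref{minmaxu_0}, and $w\le M_0^2+\lambda(u-\mu y)^2$ on $B_R\cap\partial\Omega$ by \eqref{hyplocalgrad}---we conclude $\sup w<\infty$, hence $|\nabla u|\le C$ on $B_{R/2}(x_0,y_0)\cap\overline\Omega$ up to time $T$, so $(x_0,y_0)\notin GBUS(u_0)$.

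\textbf{Main obstacle.} The computation for $p>2$ is considerably more intricate than the classical Bernstein identity for $p=2$: the differentiation of $a_{ij}(\nabla u)$ produces third-order cross terms $\partial_k a_{ij}\cdot u_{ij}u_k$ that must be absorbed into the quadratic form $a_{ij}u_{ik}u_{jk}$ using the lower bound $|\nabla u|\ge \mu/2$, and the cutoff terms $\nabla\eta,D^2\eta$ interact nonlinearly with the degenerate coefficients. The rest of the argument (choice of $\ell$, choice of $\lambda$, the final comparison on the parabolic boundary) is standard, but carrying out the bookkeeping so that the $q>p$ gain dominates all error terms is the delicate part.
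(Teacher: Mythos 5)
Your overall architecture is the same as the paper's: localize $|\nabla u|^2$ with a cutoff supported in $B_R(x_0,y_0)$, evaluate at a spatio-temporal maximum, dispose of boundary maxima by the hypothesis \eqref{hyplocalgrad}, of the initial slice by regularity, and of interior maxima by a superlinear absorption term coming from $q>p$. The difference is that the paper does not rederive the Bernstein inequality at all: it quotes Lemma~\ref{tech} (i.e.\ \cite[Lemma~3.1]{AmalJDE}), which states precisely that $z=\eta|\nabla u|^2$ satisfies $\mathcal{L}z+C_2z^{\frac{2q-p+2}{2}}\le C_3(\|u_0\|_\infty/t_0)^{\frac{2q-p+2}{q}}+C_3R^{-\frac{2q-p+2}{q-p+1}}$ for a cutoff with $|\nabla\eta|\le CR^{-1}\eta^\alpha$, $\alpha=(q+1)/(2q-p+2)$, and then the proof is only the three-case maximum argument (started at $t_0=T/2$, which also sidesteps continuity of $\nabla u$ at $t=0$; the initial slice is handled by $M_1=\sup_{t\le t_0}\|\nabla u(t)\|_\infty<\infty$ rather than by $u_0\in C^2$).

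The genuine gap is in the central estimate you leave as the ``main obstacle''. Your schematic inequality at the maximum, $c_0|\nabla u|^{q+p-2}\eta^{2\ell}\le C(|\nabla\eta|^2+\eta|D^2\eta|)|\nabla u|^{p}+\lambda C|\nabla u|^{p}+\text{l.o.t.}$, is not justified and, as written, cannot be the outcome of the computation you describe. After using $\nabla w=0$, the Hamilton--Jacobi term $q|\nabla u|^{q-2}\nabla u\cdot\nabla(|\nabla u|^2)$ produces a bad term of order $|\nabla u|^{q+1}|\nabla\eta|\,\eta^{2\ell-1}$, which does not appear on your right-hand side and which cannot be absorbed by terms of order $|\nabla u|^{p}$; moreover your claimed good exponent $q+p-2$ exceeds $q+1$ only when $p>3$, so even granting it the absorption fails for $2<p\le 3$. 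The correct absorption term has exponent $2q-p+2$ (note $2q-p+2>q+1$ exactly because $q>p-1$), and it is \emph{not} obtained by Cauchy--Schwarz on the cross terms: it requires using the equation itself to bound the good Bernstein quantity $a_{ij}u_{ik}u_{jk}\ge|\nabla u|^{p-2}|D^2u|^2$ from below by $c\,|\nabla u|^{2-p}\bigl(|\nabla u|^q-|u_t|\bigr)^2$, hence by $c|\nabla u|^{2q-p+2}$ modulo a term in $u_t$, which must then be controlled (this is where the $(\|u_0\|_\infty/t_0)$ term in \eqref{ouf}, or alternatively Lemma~\ref{estimderivtemps}, enters); your sketch never mentions this step. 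Finally, the extra term $\lambda(u-\mu y)^2$ does not rescue the argument: it only yields a good term of order $\lambda|\nabla u|^{p}$, plus a term $\lambda(u-\mu y)|\nabla u|^{q}$ whose coefficient vanishes on $\partial\Omega$, so it cannot dominate the $|\nabla u|^{q+1}$-type errors near the boundary. With the key differential inequality replaced by the quotable Lemma~\ref{tech}, your surrounding maximum-principle argument does match the paper's proof.
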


The proof is based on a local Bernstein technique. 
For $(x_0,y_0)\in\partial\Omega$, $R>0$ 
and given $\alpha\in (0,1)$, we may select a cut-off function $\eta\in C^2(\overline{B}_{R}(x_0, y_0))$, with $0 < \eta \leq 1$, such that
$$\eta=1\ \hbox{ on $\overline{B}_{R/2}(x_0, y_0)$}, \qquad \eta = 0\ \hbox{ on $\partial B_{R}(x_0, y_0)$}$$
and
\begin{eqnarray}
\left.
\begin{array}{rr}
|\nabla \eta|\leq C R^{-1} \eta ^{\alpha}\\
\noalign{\vskip 1mm}
|D^2 \eta|+\eta^{-1}|\nabla \eta|^2\leq C R^{-2} \eta^{\alpha}
\end{array}
\right\}
\quad\text{on $\overline{B}_{R}(x_0, y_0)$},
\end{eqnarray}
where $C = C (\alpha) > 0$  (see~e.g.~\cite{Zhan} for an example of such function).
Also, for $0< t_0<\tau<T=T_{max}(u_0)$, we denote
$$Q^{t_0}_{\tau, R}= \left(B_R(x_0, y_0)\cap \Omega\right)\times (t_0,\tau).$$
For the proof of Lemma \ref{localgrad},
we then rely on the following lemma from \cite{AmalJDE}
(cf.~\cite[Lemma~3.1]{AmalJDE}; it was used there to derive upper estimates on $|\nabla u|$ away from the boundary).

\begin{lem}\label{tech}
Let $\mu\ge 0$ and $u_0\in \mathcal{V}_\mu$.
Let $(x_0,y_0)\in\partial\Omega$, $R>0$, $0< t_0<\tau<T$ and
choose $\alpha=(q+1)/(2q-p+2)$.
Denote $w= |\nabla u|^2$ and $z=\eta w$.
Then $z\in C^{2,1}(Q_{\tau,R}^{t_0})$ and satisfies the following differential inequality
\begin{eqnarray}\label{ouf}
\mathcal{L} z+ C_2 z^{\frac{2q-p+2}{2}}\leq C_3\left(\dfrac{\left\|u_0\right\|_{\infty}}{t_0}\right)^{\frac{2q-p+2}{q}} + C_3R^{-\frac{2q-p+2}{q-p+1}},
\end{eqnarray}
where $C_i=C_i(p, q)>0$,
\begin{eqnarray}
&\mathcal{L} z=z_t- \bar{\mathcal{A}} z-\bar{H} \cdot\nabla z,\\
\noalign{\vskip 1mm}
&\bar{\mathcal{A}} z= |\nabla u|^{p-2} \Delta z + (p-2) |\nabla u|^{p-4}(\nabla u)^t D^2 z \nabla u,
\end{eqnarray}
and $\bar{H}$ is defined by 
\begin{align}
& \bar{H}:= \left[ (p-2) w^{\frac{p-4}{2}}\Delta u+\dfrac{(p-2)(p-4)}{2} w^{\frac{p-6}{2}} \nabla u \cdot\nabla w+q w^{\frac{q-2}{2}}\right]\nabla u \nonumber \\
 &\quad\quad+\dfrac{p-2}{2} w^{\frac{p-4}{2}} \nabla w. \label{apender}
\end{align}
 \end{lem}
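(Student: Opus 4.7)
I would follow the classical Bernstein--Friedman--Ladyzhenskaya approach, adapted to the quasilinear $p$-Laplacian diffusion and the superquadratic gradient nonlinearity, splitting the proof into three stages. To make all computations classical, I would first regularize by replacing $|\nabla u|^{p-2}$ with $(\varepsilon^2+|\nabla u|^2)^{(p-2)/2}$, working with the smooth solutions provided by the standard theory for uniformly parabolic quasilinear equations, deriving the inequality uniformly in $\varepsilon$, and passing to the limit $\varepsilon\to 0^+$ via the local H\"older continuity of $\nabla u$ from Theorem~\ref{theorsou1}. The core of Stage~1 is a striking identity for $w=|\nabla u|^2$: differentiating the PDE in $x_k$, multiplying by $2u_k$, summing, and using the standard identities $2u_i(\Delta u)_i=\Delta w-2|D^2u|^2$, $u_ju_{ij}=\tfrac12 w_i$, $u_iu_ju_{ij}=\tfrac12\nabla u\cdot\nabla w$, and $u_iu_ju_ku_{ijk}=\tfrac12(\nabla u)^tD^2w\,\nabla u-\tfrac14|\nabla w|^2$ (the last obtained by differentiating $w_{jk}=2u_{ik}u_{ij}+2u_iu_{ijk}$ and contracting), a routine but lengthy algebraic reorganization shows that all but one term cancel, yielding
$$w_t-\bar{\mathcal{A}}w-\bar H\cdot\nabla w=-2w^{(p-2)/2}|D^2u|^2,$$
with $\bar{\mathcal{A}}$ and $\bar H$ \emph{exactly} as in the statement.

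In Stage~2 I would convert the Bernstein absorption $w^{(p-2)/2}|D^2u|^2$ into the stated $w^{(2q-p+2)/2}$. In two dimensions $|D^2u|^2\geq \tfrac12(\Delta u)^2$, and $\Delta_pu=w^{(p-2)/2}\Delta u+(p-2)w^{(p-4)/2}u_iu_ju_{ij}$ yields $|\Delta_pu|\leq C(p)w^{(p-2)/2}|D^2u|$. Combining with $\Delta_pu=u_t-w^{q/2}$,
$$w^{(p-2)/2}|D^2u|^2\geq c\,w^{-(p-2)/2}(w^{q/2}-|u_t|)_+^2\geq c'\,w^{(2q-p+2)/2}-C|u_t|^{(2q-p+2)/q},$$
the second inequality by case analysis on $w\gtrless|u_t|^{2/q}$. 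Using the standard time-smoothing estimate $|u_t|\leq C\|u_0\|_\infty/t_0$ on $[t_0,\tau]$ --- provable via time-translation invariance applied to $u(\cdot,t+t_0)$ together with the $L^\infty$ bound (\ref{minmaxu_0}), and for $C^2$ initial data already contained in Lemma~\ref{estimderivtemps}, then extended by approximation --- one obtains
$$w_t-\bar{\mathcal{A}}w-\bar H\cdot\nabla w+c\,w^{(2q-p+2)/2}\leq C(p,q)(\|u_0\|_\infty/t_0)^{(2q-p+2)/q}\quad\hbox{in }Q_{\tau,R}^{t_0}.$$

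In Stage~3 I would localize. Setting $z=\eta w$, one computes
$$\mathcal{L}z=\eta(w_t-\bar{\mathcal{A}}w-\bar H\cdot\nabla w)-w\bar{\mathcal{A}}\eta-2\bar a_{ij}\eta_iw_j-w\bar H\cdot\nabla\eta,$$
where $\bar a_{ij}=w^{(p-2)/2}\delta_{ij}+(p-2)w^{(p-4)/2}u_iu_j$. The first parenthesis is controlled by Stage~2, while the remaining ``cutoff errors'' have the form $R^{-j}\eta^\alpha\cdot$(monomial in $w$, $|\nabla w|$, $|D^2u|$). Using $|\nabla w|\leq 2w^{1/2}|D^2u|$, each $|D^2u|$-containing factor is Young-split with half absorbed into the still-available Bernstein term $\eta w^{(p-2)/2}|D^2u|^2$, and the rest bounded by $(c/2)\eta w^{(2q-p+2)/2}$ plus a power of $R^{-1}$. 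The decisive point is that the choice $\alpha=(q+1)/(2q-p+2)$ is \emph{precisely} what balances the dominant error $w^{(q+1)/2}R^{-1}\eta^\alpha$ (arising from the $qw^{(q-2)/2}\nabla u$ summand of $\bar H$ in $w\bar H\cdot\nabla\eta$) against $\eta w^{(2q-p+2)/2}$ via Young's inequality with conjugate exponents $s=(2q-p+2)/(q+1)$ and $s'=(2q-p+2)/(q-p+1)$: indeed, one checks $s\cdot\alpha=1$, so the $\eta$-powers match exactly, and $s'$ yields the sharp $R^{-(2q-p+2)/(q-p+1)}$. All other error terms admit similar (or easier) Young estimates with the same $R$-exponent. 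Since $\eta\leq 1$ and $(2q-p+2)/2\geq 1$, $\eta w^{(2q-p+2)/2}\geq z^{(2q-p+2)/2}$, converting the absorption to $C_2 z^{(2q-p+2)/2}$ and yielding (\ref{ouf}).

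The main obstacles are: first, the algebraic verification of the Bernstein identity in Stage~1, which, while routine, requires meticulous tracking of many terms and the ``miraculous'' cancellation of all lower-order contributions into $\bar H\cdot\nabla w$; second, the precise bookkeeping of monomial exponents in the Young-inequality step, since the specific choice $\alpha=(q+1)/(2q-p+2)$ and the sharp $R$-exponent depend on delicate power-matching; and third, justifying the $C^{2,1}$ regularity of $z$ on $Q^{t_0}_{\tau,R}$ (since $\nabla u$ is only known to be H\"older continuous), which is handled via the $\varepsilon$-regularization mentioned above together with uniform-in-$\varepsilon$ bounds.
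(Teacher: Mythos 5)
First, a structural remark: the paper does not prove this lemma at all --- it is quoted verbatim from \cite[Lemma~3.1]{AmalJDE}, so there is no internal proof to compare against. Your reconstruction follows the route one would expect that reference to take, and most of it checks out: the Bochner/Bernstein identity $w_t-\bar{\mathcal A}w-\bar H\cdot\nabla w=-2w^{(p-2)/2}|D^2u|^2$ is correct with exactly the stated $\bar H$ (I verified the term-by-term matching of $\bar H\cdot\nabla w$ and $\bar{\mathcal A}w$ against $2u_k\partial_k$ of the equation), the conversion $|D^2u|^2\ge\frac12(\Delta u)^2$ together with $|\Delta_pu|\le C(p)w^{(p-2)/2}|D^2u|$ and $\Delta_pu=u_t-w^{q/2}$ does produce the absorption $w^{(2q-p+2)/2}$, and your Young-inequality bookkeeping in the localization step is right: with $s=(2q-p+2)/(q+1)$ one has $s\alpha=1$ and $s'=(2q-p+2)/(q-p+1)$, which is precisely how the stated $\alpha$ and the $R$-exponent arise.

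The genuine gap is the bound $|u_t|\le C\|u_0\|_\infty/t_0$, which is the only possible source of the term $(\|u_0\|_\infty/t_0)^{(2q-p+2)/q}$ and which you justify by ``time-translation invariance together with the $L^\infty$ bound.'' That argument does not give a $1/t_0$ smoothing factor: comparing $u(\cdot,\cdot+h)$ with $u$ yields $|u(t+h)-u(t)|\le\sup_\Omega|u(\cdot,h)-u_0|$, hence $|u_t|\le\|\Delta_pu_0+|\nabla u_0|^q\|_\infty$ as in Lemma~\ref{estimderivtemps} --- a bound controlled by $\|u_0\|_{C^2}$, not by $\|u_0\|_\infty/t_0$, and approximating $W^{1,\infty}$ data by smooth data does not improve this. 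A smoothing estimate of B\'enilan--Crandall type is what is actually needed, and it is not free here because the equation mixes two different homogeneities; one workable route is to use that $\Delta_p$ is $(p-1)$-homogeneous and $|\nabla u|^q$ is $q$-homogeneous with $q>p-1$, so that $h:=tu_t+\frac{u}{p-2}$ satisfies $h_t-\mathcal Mh=(1-\frac{q-1}{p-2})|\nabla u|^q\le0$ for the linearized operator $\mathcal M$, whence $u_t\le\frac{\|u_0\|_\infty}{(p-2)\,t}$ by the maximum principle. Note this gives only a one-sided bound --- but a one-sided bound suffices, since in your Stage~2 one only needs $|\Delta_pu|\ge(w^{q/2}-u_t)_+\ge\frac12 w^{q/2}$ when $w^{q/2}\ge 2u_t^+$; your use of $|u_t|$ there is both unnecessary and harder to justify. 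A secondary soft spot is the claimed $C^{2,1}$ regularity of $z$ and the passage to the limit $\varepsilon\to0$ of a pointwise second-order differential inequality, which is delicate where $\nabla u$ may vanish (for $\mu\ge0$ the lemma is stated without a nondegeneracy hypothesis); in the paper's actual application this is covered by (\ref{soluy}) and Theorem~\ref{theorsou2}, but as stated your regularization step needs more detail than ``uniform-in-$\varepsilon$ bounds.''
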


\begin{proof}[{Proof of Lemma \ref{localgrad}}]
Let $t_0=T/2<\tau<T$ and set
$$M_1:=\underset{0\leq t\leq t_0}{\text{sup}} \left\|\nabla u\right\|_{L^{\infty}}<\infty.$$
By Lemma~\ref{uclass2}(ii) and Theorem \ref{theorsou2},
we know that $\nabla u$ is a continuous function on $\overline{\Omega}\times (0,T)$,
hence $z\in C(\overline{Q_{\tau,R}^{t_0}})$.
Therefore, unless $z\equiv 0$ in $\overline{Q_{\tau, R}^{t_0}}$,  $z$  must reach a positive maximum at some point $(x_1,y_1,t_1)\in \overline{Q_{\tau,R}^{t_0}}$.  
Since 
\begin{equation}\label{boundz1}
z=0\quad\hbox{ on $\left(\partial B_{R}(x_0,y_0)\cap\overline\Omega\right)\times [t_0, \tau]$,}
\end{equation}
 we deduce that either 
$(x_1,y_1)\in B_{R}(x_0,y_0)\cap\Omega$ or $(x_1, y_1)\in B_R(x_0,y_0)\cap\partial\Omega$.
\smallskip

$\bullet$ If $t_1=t_0$, then 
\begin{equation}\label{boundz2}
z(x_1, y_1, t_1)\leq \left\|\nabla u(t_0)\right\|_{L^{\infty}}^2\leq M_1^2.
\end{equation}

$\bullet$ If $t_0< t_1\leq \tau$ and $(x_1, y_1)\in B_R(x_0,y_0)\cap\partial\Omega$, then, by (\ref{hyplocalgrad}),
\begin{equation}\label{boundz3}
z(x_1, y_1, t_1)\leq M_0^2.
\end{equation}

$\bullet$ Next consider the case $t_0< t_1\leq \tau$ and $(x_1,y_1)\in B_R(x_0,y_0)\cap\Omega$.
Then we have $\nabla z(x_1, y_1,  t_1)=0$, $z_t(x_1, y_1, t_1)\geq 0$ and $D^2 z(x_1, y_1, t_1)\leq 0$,
and therefore $\mathcal{L} z\geq0$.
Using~\eqref{ouf} we arrive at
$$
C_2  z(x_1, y_1, t_1)^{\frac{2q-p+2}{2}}\leq C_3 \left(\dfrac{\left\|u_0\right\|_{\infty}}{t_0}\right)^{\frac{2q-p+2}{q}}+ C_3 R^{-\frac{2q-p+2}{q-p+1}}
$$
that is,
\begin{equation}\label{boundz4}
\sqrt{z(x_1, y_1, t_1)}\leq C\left(\dfrac{\left\|u_0\right\|_{\infty}}{t_0}\right)^{\frac{1}{q}} + CR^{-\frac{1}{q-p+1}}=:M_2>0.
\end{equation}

It follows from (\ref{boundz1})-(\ref{boundz4}) that 
$$\underset{\overline{Q_{\tau,R}^{t_0}}}{\max}\ z\leq M_3^2, \quad\hbox{ with $M_3=\max\left\{M_0, M_1, M_2\right\}$}.$$
Since $z=|\nabla u|^2$ in $\left(B_{R/2}(x_0,y_0)\cap\overline\Omega\right)\times(t_0,\tau)$ and $\tau\in(t_0,T)$ is arbitrary, we get
$$|\nabla u|\leq M_3 \quad\hbox{in $\left(B_{R/2}(x_0,y_0)\cap\overline\Omega\right)\times(t_0,T)$,}$$
and we conclude that $(x_0,y_0)$ is not a gradient blow-up point.
\end{proof}

By combining Lemmas \ref{uclass2} and \ref{localgrad}, we can now easily obtain a class of initial data 
whose possible gradient blow-up set is contained in a small neighborhood of the origin.

\begin{lem}\label{localiz}
Let $\rho, \mu, u_0$ be as in Lemma~\ref{uclass2}(ii).
Then $GBUS(u_0) \subset [-\rho, \rho]\times\left\{0\right\}$.\end{lem}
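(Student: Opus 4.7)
My plan is to combine three ingredients: the two-sided sandwich
$$\mu y \le u \le \bar U \quad\text{in }\ Q_T,$$
whose lower bound follows from Lemma~\ref{uclass} (the proof of \eqref{compuh0} only uses $u_0\in\mathcal{V}_\mu$) and whose upper bound is Lemma~\ref{uclass2}(i); the boundary-to-interior gradient estimate of Lemma~\ref{localgrad}; and the fact, recalled in the introduction and proved in \cite{AmalJDE}, that $GBUS(u_0)\subset \partial\Omega$. Since $\bar U\equiv\mu y$ on $\Sigma'_\rho$, the sandwich will squeeze $|\nabla u|$ uniformly in $t$ on $\Sigma'_\rho$, which is exactly what the hypothesis of Lemma~\ref{localgrad} requires.

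Concretely, I would first invoke Lemma~\ref{uclass2}(ii) and Theorem~\ref{theorsou2}(i) to ensure that $u$ is a classical solution with $\nabla u$ continuous up to $\partial\Omega$ on $(0,T)$. Since $u\equiv\mu y$ on $\partial\Omega$, the tangential part of $\nabla u$ on $\partial\Omega$ is automatically bounded. For the outward normal derivative $\partial_\nu u$, I would use Hopf-style arguments: the nonnegative function $\phi_1:=u-\mu y$ vanishes identically on $\partial\Omega$, so attains its minimum at every boundary point $P$, giving $\partial_\nu u(P,t)\le \partial_\nu(\mu y)(P)$; symmetrically, $\phi_2:=\bar U-u\ge 0$ in $\overline\Omega$ vanishes at every $P\in\Sigma'_\rho$, yielding $\partial_\nu u(P,t)\ge \partial_\nu\bar U(P)$. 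Both $\mu y$ and $\bar U$ are time-independent $C^2(\overline\Omega)$ functions, so both bounds are uniform in $t\in[0,T)$ and continuous in $P\in\Sigma'_\rho$.

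With these estimates in hand, I would fix any $(x_0,y_0)\in\partial\Omega\setminus([-\rho,\rho]\times\{0\})$. Because this set is relatively open in $\partial\Omega$ and contained in $\Sigma'_\rho$, I can choose $R>0$ with $B_R(x_0,y_0)\cap\partial\Omega\subset\Sigma'_\rho$, and the preceding step then gives a time-uniform bound $|\nabla u|\le M_0$ on $(B_R(x_0,y_0)\cap\partial\Omega)\times[0,T)$. Lemma~\ref{localgrad} would then rule out $(x_0,y_0)$ as a GBU point, and combining with $GBUS(u_0)\subset\partial\Omega$ yields $GBUS(u_0)\subset[-\rho,\rho]\times\{0\}$. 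The main obstacle, if one can call it that, is simply checking that the Hopf-type normal-derivative comparison is uniform in $t$; this is painless here since the barriers $\mu y$ and $\bar U$ do not depend on $t$ and $u$ is $C^1$ up to $\partial\Omega$ by the regularity results of Section~2.
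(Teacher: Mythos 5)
Your proposal is correct and follows essentially the same route as the paper: the paper also sandwiches $u$ between $\mu y$ (from \eqref{compuh0}) and $\bar U$ (Lemma~\ref{uclass2}(i)), notes that these coincide on $\Sigma'_\rho$, deduces $\partial_\nu\bar U\le\partial_\nu u\le\mu\,\partial_\nu y$ and hence a time-uniform bound $|\nabla u|^2\le\mu^2+|\partial_\nu u|^2\le C$ on $\Sigma'_\rho\times(0,T)$, and then concludes with Lemma~\ref{localgrad} (together with $GBUS(u_0)\subset\partial\Omega$). Your extra remarks on regularity up to the boundary and on the normal-derivative comparison being uniform in $t$ are consistent with the paper's argument.
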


\begin{proof} Denote again $\Sigma_\rho=[-\rho/2,\rho/2]\times \{0\}$ and $\Sigma'_\rho=\partial\Omega\setminus ([-\rho,\rho]\times \{0\})$.
In view of  Lemma \ref{localgrad}, it suffices to show that
\begin{equation}\label{BoundSigmaRho}
\sup_{(x,y)\in\Sigma'_\rho,\  t\in (0,T)} |\nabla u(x,y,t)| <\infty.
\end{equation}

But (\ref{BoundSigmaRho}) easily follows from a comparison with the function $\bar U$
provided in Lemma~\ref{GlobalBarrier}.
Indeed, under the assumptions of Lemma \ref{uclass2}(i), we already know that $u\le \bar U$ in $Q_T$.
Also, $u=\mu y= \bar U$ on $\Sigma'_\rho\times (0,T)$.
From this, along with (\ref{compuh0}), it follows that 
\begin{equation}\label{BoundSigmaRho2}
 \partial_\nu\bar U\le \partial_\nu u \le \mu \,\partial_\nu y\ \text{ on $\Sigma'_\rho\times (0,T)$.}
\end{equation}
From (\ref{BoundSigmaRho2}) and (\ref{eqprincipale})$_2$, we get
$$ |\nabla u|^2\le \mu^2+|\partial_\nu u|^2 \le C \ \text{ on $\Sigma'_\rho\times (0,T)$,}$$
hence (\ref{BoundSigmaRho}), and the lemma is proved.
\end{proof}

\section{Existence of well-prepared initial data: proof of Theorem \ref{singlegbu}(i)}

We need to construct initial data
meeting the requirements from sections~3--5.
This will be achieved in the following lemma.
Let us fix an even function $\varphi\in C^{\infty}(\R)$ such that $s\varphi'(s)\le 0$, with
\begin{equation}\label{defvarphicutoff}
\varphi(s)=\left\{\begin{array}{ll}
       1\quad &\text{for\ } |s|\le 1/3\\
              \noalign{\vskip 1mm}
	0 \quad&\text{for\ } |s|\ge 2/3.
      \end{array}
\right.
\end{equation}

\begin{lem}\label{bonnedonnee}
Let $\kappa=(q-p)/(q-p+1)$ and let $C_1=C_1(p,q)>0$ be given by Lemma~\ref{blowupdata}.
 For $\eps\in (0,\min(L_1,L_2/2))$, define
\begin{equation}\label{defpsicutoff}
\psi_\eps(y)=\left\{\begin{array}{ll}
       \varphi\Bigl(\displaystyle\frac{y-\eps}{\eps}\Bigr)\quad &\text{for} \ 0\le y\le \eps\\
       \noalign{\vskip 1mm}
	\varphi\Bigl(\displaystyle\frac{y-\eps}{L_2}\Bigr) \quad&\text{for} \ y\ge \eps\\
      \end{array}
\right.
\end{equation}
and let $u_0$ be defined by
$$u_0(x,y)=\mu y+C_1\eps^\kappa\varphi\Bigl(\displaystyle\frac{x}{\eps}\Bigr)\psi_\eps(y).$$
Next fix $0<\rho<L_1$ and let $\mu_0=\mu_0(p,q,\Omega,\rho)>0$ and $c=c(p,q,\Omega,\rho)>0$ 
be given by Lemmas~\ref{GlobalBarrier} and \ref{uclass2}.
For any $\mu\in (0,\mu_0]$, there exists $\eps_0=\eps_0(p,q,\Omega,\mu,\rho)>0$
such that, for all $\eps\in(0,\eps_0]$, the function $u_0\in \mathcal{V}_\mu$ and satisfies 
\begin{eqnarray}
&&\hbox{$u_0$ is symmetric with respect to the line $x=0$}, \label{don00} \\
&&\partial_x u_0\leq 0 \ \text{ in $\Omega_+$}, \label{don0b} \\
&&\partial_y u_0\geq \mu/2 \quad\text{in }\Omega, \label{don1b} \\
&&u_0(x,y)\le\mu \bigl(y+c \chi_{(-\rho/2,\rho/2)\times (0,L_2)}\bigr) 
\quad\text{in }\Omega, \label{don0b2c}\\
&&u_0(x,y)\ge C_1 \eps^\kappa \quad\text{in }B_{\eps/3}(0,\eps)\subset\Omega. \label{don4b}
\end{eqnarray}
\end{lem}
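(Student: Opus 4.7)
The plan is to verify each of the listed properties by direct inspection of the explicit formula for $u_0$, choosing $\eps_0$ small enough to enforce the quantitative inequalities. Smoothness of $u_0$ is immediate: since $\varphi\equiv 1$ on $[-1/3,1/3]$, all derivatives of $\varphi$ vanish at $s=0$, so the piecewise function $\psi_\eps$ is in fact $C^\infty$ at the junction point $y=\eps$, and therefore $u_0\in C^\infty(\R^2)$. A key first observation is that the support of the perturbation $(x,y)\mapsto\varphi(x/\eps)\psi_\eps(y)$ is contained in the rectangle
\[
R_\eps:=[-2\eps/3,2\eps/3]\times[\eps/3,\eps+2L_2/3],
\]
which, as soon as $\eps$ is small enough (so that $2\eps/3<\rho/2$ and $\eps+2L_2/3<L_2$), lies inside $(-\rho/2,\rho/2)\times(0,L_2)\subset\Omega$ by (\ref{hypOmega2}). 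In particular, $u_0=\mu y$ on $\partial\Omega$, and combined with $\varphi\psi_\eps\ge 0$ this yields $u_0\in\mathcal{V}_\mu$.

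Properties (\ref{don00}) and (\ref{don0b}) follow immediately: the first from the evenness of $\varphi$, the second by differentiating in $x$ and using $s\varphi'(s)\le 0$ together with $\psi_\eps\ge 0$. For (\ref{don4b}), I note that in $B_{\eps/3}(0,\eps)$ one has $|x|\le\eps/3$, hence $\varphi(x/\eps)=1$; and for each of the two cases $y\le\eps$ and $y\ge\eps$, the argument of $\varphi$ entering the definition of $\psi_\eps$ lies in $[-1/3,1/3]$ (the second case using $\eps\le L_2$), so $\psi_\eps(y)=1$. The lower bound $u_0\ge C_1\eps^\kappa$ follows, and the ball itself sits in $\Omega$ since it is contained in $(-L_1,L_1)\times(0,2L_2)$ for small $\eps$.

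The two remaining items are quantitative, and both are driven by the smallness of $\eps^\kappa$. For (\ref{don1b}), I write
\[
\partial_y u_0=\mu+C_1\eps^\kappa\varphi(x/\eps)\psi_\eps'(y).
\]
For $y\le\eps$, the sign condition $s\varphi'(s)\le 0$ yields $\varphi'((y-\eps)/\eps)\ge 0$, so the correction term is nonnegative. For $y\ge\eps$ one has the $\eps$-independent bound $|\psi_\eps'(y)|\le\|\varphi'\|_\infty/L_2$, hence
\[
\partial_y u_0\ge\mu-C_1\|\varphi'\|_\infty L_2^{-1}\eps^\kappa\ge\mu/2
\]
as soon as $\eps\le\eps_0(p,q,\Omega,\mu)$; this is precisely where $\kappa>0$ is used. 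For (\ref{don0b2c}), the perturbation vanishes outside $R_\eps$, while on $R_\eps\subset(-\rho/2,\rho/2)\times(0,L_2)$ one has $u_0-\mu y\le C_1\eps^\kappa\le\mu c$, provided $\eps_0$ is further shrunk in terms of $\rho$ (which determines $c$ via Lemma~\ref{uclass2}). There is no real technical obstacle here; the only point to watch is that $\eps_0$ be chosen small enough in function of $p$, $q$, $\Omega$, $\mu$ and $\rho$ simultaneously, which is exactly the dependence claimed in the statement.
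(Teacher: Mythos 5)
Your proposal is correct and follows essentially the same route as the paper: direct verification of each property from the explicit formula, with the quantitative constraints $C_1\eps^\kappa\|\varphi'\|_\infty\le \mu L_2/2$ (for the lower bound on $\partial_y u_0$) and $C_1\eps^\kappa\le\mu c$, $\eps$ small relative to $\rho$ and $L_2$ (for the comparison with $\mu(y+c\chi)$ and for $u_0\in\mathcal{V}_\mu$) absorbed into the choice of $\eps_0$. The only cosmetic difference is your explicit support rectangle $R_\eps$ and the smoothness remark at the junction $y=\eps$, which the paper handles implicitly via the same observation that $\psi_\eps$ vanishes outside $[\eps/3,3L_2/4]$.
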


\begin{proof}
Assume $\eps\le \min(L_1,L_2/12)$. Then 
\begin{equation}\label{controlPsiEps1}
\psi_\eps(y)=0 \quad\text{for\ } y\not\in [\textstyle\frac{\eps}{3},\frac{3L_2}{4}]
\end{equation}
(indeed, $y\ge \frac{3L_2}{4}$ implies $\frac{y-\eps}{L_2}\ge \frac{3}{4}-\frac{1}{12}=\frac{2}{3}$)
and therefore $u_0\in \mathcal{V}_\mu$.
Properties (\ref{don00})-(\ref{don0b}) are clear by the choice of $\varphi$.

To check (\ref{don1b}), we note that 
$$\partial_y u_0 =\mu +C_1\varepsilon^\kappa\varphi\Bigl(\displaystyle\frac{x}{\eps}\Bigr)\psi'_\eps(y).$$
For  $0\leq y\leq \eps$, we have $\psi'_\eps(y)\ge 0$, hence $\partial_y u_0 \ge\mu$.
Whereas, for $y\ge \eps$, we have 
$$\psi'_\eps(y)=L_2^{-1}\varphi'\bigl((y-\eps)/L_2\bigr)\ge -L_2^{-1}\|\varphi'\|_\infty,$$
hence 
$$\partial_y u_0 \ge \mu-C_1\varepsilon^\kappa L_2^{-1}\|\varphi'\|_\infty\ge \mu/2$$
whenever $\varepsilon^\kappa\le \mu L_2/(2C_1\|\varphi'\|_\infty)$.

As for (\ref{don0b2c}), if $C_1\varepsilon^\kappa\le \mu c$ and $\varepsilon\le\rho/2$,
it immediately follows from $\varphi,\psi_\eps\le 1$ and ${\rm supp}(\varphi)\subset (-1,1)$.
Finally, since $\varphi(x/\eps)=1$ for $|x|\le \eps/3$ and $\psi_\eps(y)=1$ for $|y-\eps|\le \eps/3$,
we have (\ref{don4b}). The lemma is proved.
\end{proof}

\begin{proof}[Proof of Theorem \ref{singlegbu}(i)]
Let $\mu$ and $u_0$ be as in Lemma~\ref{bonnedonnee}.

$\bullet$ The fact that $T_{max}(u_0) < \infty$ follows from Lemma~\ref{blowupdata}.

$\bullet$ Next, we have $GBUS(u_0)\subset [-\rho, \rho]\times\left\{0\right\}$
as a consequence of Lemma~\ref{localiz}.

$\bullet$ Properties (\ref{don3})-(\ref{don4}) 
follow from Lemma \ref{uclass}.

$\bullet$ Finally, property (\ref{don1}) is a consequence of Lemma \ref{uclass2}(ii).

This proves the assertion.
\end{proof}

\section{Nondegeneracy of gradient blow-up points}

In this section, we show that if $u$ is only ``weakly singular'' in a neighborhood of a boundary point
$(x_0, 0)$, then the singularity is removable.

\begin{lem}\label{milouar1}
Let $\rho, \mu, u_0$ be as in Lemma~\ref{uclass2}(ii) and
let $x_0\in (-L_1,L_1)$. There exist $c_0=c_0(p,q)>0$ such that,
if $u_0\in \mathcal{V}_\mu$ with $T:=T_{max}(u_0)<\infty$ and 
\begin{equation}  
u(x,y)\leq  c_0 y ^{(q-p)/(q-p+1)}\qquad\text{in}\quad \left(B_R(x_0,0)\cap \Omega\right)\times [t_0, T), 
\end{equation} 
for some $R>0$ and $t_0\in (0,T)$, then $(x_0, 0)$ is not a gradient blow-up point.
\end{lem}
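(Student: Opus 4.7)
By Lemma~\ref{localgrad}, it suffices to bound $|\nabla u|$ on $\partial\Omega$ in a neighborhood of $(x_0, 0)$ up to time $T$. Near $(x_0, 0)$, $\partial\Omega$ contains the flat segment $\{y = 0\}$, on which the boundary value $u = \mu y$ vanishes; hence $u_x(x, 0, t) = 0$ and $|\nabla u(x, 0, t)| = u_y(x, 0, t)$. The lower bound $u_y \ge \mu/2$ from Lemma~\ref{uclass2}(ii) then reduces the problem to an \emph{upper} bound on $u_y$ uniform for $t \in [t_0, T)$, or equivalently to promoting the sublinear hypothesis $u \le c_0 y^\kappa$ to a linear estimate $u(x, y, t) \le Cy$ in a small half-neighborhood of $(x_0, 0)$.

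The plan is to construct a ``regularizing in time'' barrier, i.e., a local supersolution $V$ on a parabolic cylinder $D \times [t_1, T) \subset \bigl(\Omega \cap B_R(x_0, 0)\bigr) \times [t_0, T)$, with $D = (x_0 - r, x_0 + r) \times (0, r)$ and $t_1 \in [t_0, T)$. The model is the critical one-dimensional stationary profile $A_* y^\kappa$ solving $-\Delta_p U = |\nabla U|^q$, where
$$A_* := \kappa^{-1}\bigl[(p-1)(1-\kappa)\bigr]^{1/(q-p+1)}.$$
A time-shifted, optionally subtracted ansatz
$$V(y, t) = K\bigl(y + \eta(t)\bigr)^\kappa \quad\text{or}\quad V_*(y, t) = K\bigl[\bigl(y + \eta(t)\bigr)^\kappa - \eta(t)^\kappa\bigr],$$
with $K > 0$ and $\eta(t_1) = 0$, is the natural candidate. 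The algebraic fact that forces the exponent $\kappa$ is the identity $(\kappa - 1)(q - p + 1) = -1$, which makes $-\Delta_p V$ and $|\nabla V|^q$ share the same power $s^{(\kappa-1)(p-1) - 1}$ of $s = y + \eta(t)$; their combined coefficient $(p - 1)(1 - \kappa) K^{p-1} \kappa^{p-1} - K^q \kappa^q$ is strictly positive precisely when $K < A_*$. Fixing $c_0 < K < A_*$ thus determines the quantitative smallness of $c_0 = c_0(p, q)$ in the statement; with $\eta(t)$ slowly increasing, the parabolic term $V_t$ is dominated by the positive diffusion--source residual, so that $V$ is a supersolution.

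Comparison on the parabolic boundary of $D \times [t_1, T)$ is then straightforward: at $t = t_1$, $V(\cdot, t_1) = K y^\kappa \ge c_0 y^\kappa \ge u(\cdot, t_1)$ by hypothesis; on $\{y = 0\}$, $V \ge 0 = u$; and on the lateral/top faces, $V \ge K y^\kappa \ge c_0 y^\kappa \ge u$ provided $\eta$ is kept small. By Remark~\ref{remsou1}, $u \le V$ in $D \times [t_1, T)$. The subtracted variant $V_*$ then yields the ``regularizing'' output: for any $\tau > 0$ and $t \ge t_1 + \tau$, $V_*$ vanishes linearly at $\{y = 0\}$ with bounded slope $V_{*,y}(0, t) = K\kappa\, \eta(t)^{\kappa - 1} \le C(\tau)$, so that $u(x, y, t) \le V_*(y, t) \le C(\tau)\, y$ in a sub-neighborhood, giving $u_y(x, 0, t) \le C(\tau)$ along a subsegment of the flat boundary near $x_0$. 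Continuity of $\nabla u$ up to the boundary on $\overline\Omega \times [t_0, t_1 + \tau]$ (Lemma~\ref{uclass2}(ii) and Theorem~\ref{theorsou2}) covers the remaining time interval, and Lemma~\ref{localgrad} concludes that $(x_0, 0)$ is not a gradient blow-up point.

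The main obstacle is the delicate construction of the supersolution. The non-subtracted $V = K(y + \eta)^\kappa$ easily dominates $u$ on the full parabolic boundary but fails to vanish at $\{y = 0\}$, hence does not directly control $u_y$; the subtracted $V_*$ vanishes and has bounded slope at $\{y = 0\}$, yet fails to dominate the profile $c_0 y^\kappa$ on the lateral sides of $D$ in the regime $y \ll \eta(t)$ (where $V_*$ is linear, $\sim K\kappa\,\eta^{\kappa-1} y$, while $c_0 y^\kappa$ is sublinear). A careful two-scale argument --- applying $V$ in a thin strip near the boundary to secure boundedness of $u$, then refining to $V_*$ inside (or combining them into a hybrid barrier) --- together with the subcritical choice $K < A_*$ and slow growth of $\eta(t)$, constitutes the technical heart of the proof.
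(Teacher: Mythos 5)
Your overall strategy is the same as the paper's (reduce to a boundary gradient bound near $(x_0,0)$, obtain it from a ``regularizing in time'' barrier dominating $u$, then invoke Lemma~\ref{localgrad}), but the barrier you actually propose does not close the argument, and you say so yourself: the non-subtracted $V=K(y+\eta)^\kappa$ does not vanish on $\{y=0\}$, so $u\le V$ gives no bound on $u_y(x,0,t)$, while the subtracted $V_*$ fails the comparison on the lateral sides, where the hypothesis only provides the sublinear bound $c_0y^\kappa$ and $V_*\sim K\kappa\,\eta^{\kappa-1}y$ is beaten by $c_0y^\kappa$ as $y\to0$. The ``two-scale / hybrid barrier'' that you defer to is precisely the technical heart of the lemma, and it is not supplied; moreover, the thin-strip variant you sketch cannot repair it, because the lateral sides of any smaller rectangle still meet $\{y=0\}$ at points where only the sublinear estimate is available, so the same mismatch recurs at those corners. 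As it stands, this is a genuine gap rather than a detail.

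The paper resolves exactly this difficulty by making the regularization $x$-dependent: with $\beta=1/(q-p+1)$ it takes
$$v(x,y,t)=\eps\, y\, V^{-\beta},\qquad V=y+\eta\bigl(r^2-(x-x_0)^2\bigr)(t-t_0),$$
on $\omega_1=\{|x-x_0|<r,\ 0<y<d\}$. At $t=t_0$ and on the lateral sides $|x-x_0|=r$ one has $V=y$, so $v=\eps y^{1-\beta}$ dominates the hypothesized profile $c_0y^{1-\beta}$ (this is where $c_0=2^{-\beta}\eps_0$ is fixed, playing the role of your subcriticality condition $K<A_*$); on $\{y=d\}$ one imposes $\eta\le dT^{-1}r^{-2}$; and a lengthy computation, for $\eta$ and $\eps$ small depending only on $p,q$ (and $d,r,T$ for $\eta$), shows $v_t-\Delta_p v\ge|\nabla v|^q$. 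Comparison then gives $u\le v$, and since $V\ge\eta\bigl(r^2-|x-x_0|^2\bigr)(t-t_0)>0$ strictly inside, $v$ is Lipschitz in $y$ at $y=0$ there, yielding $|u_y|\le M_0$ on $(B_{r/2}(x_0,0)\cap\partial\Omega)\times((t_0+T)/2,T)$; Lemma~\ref{localgrad} concludes. So the missing idea in your proposal is this single $x$-dependent barrier interpolating between the singular profile on the lateral boundary and a linear-in-$y$ profile inside, together with the explicit supersolution verification, which your text only gestures at.
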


\begin{proof}
Let $x_0\in (-L_1,L_1)$. Then for some constants $r\in (0, R)$ and $d\in (0,L_2)$,
we  have  that $$\omega_1:=\left\{(x,y)\in \mathbb{R}^2; |x-x_0|<r,\ 0<y<d\right\}\subset B_{R}(x_0,0)\cap\,\Omega.$$
Setting $\beta=1/(q-p+1)$, we define the comparison function
$$v=v(x,y,t)=\eps y V^{-\beta}\qquad\text{in }Q:=  \overline{\omega_1}\times(t_0, T)$$
with
$$V=y+\eta \left( r^2-(x-x_0)^2\right)(t-t_0),$$
where $\eta, \eps>0$ are to be determined later.
We compute, in $Q$,
$$v_t=- \eps\beta\eta y (r^2-(x-x_0)^2)V^{-\beta-1},$$
$$v_x=2 \eps\beta\eta y  (x-x_0) (t-t_0)V^{-\beta-1},$$
$$v_y=\eps V^{-\beta}- \eps\beta y V^{-\beta-1}
=\eps V^{-\beta} \Bigl[1-\beta \frac{y}{V}\Bigr],
$$
\begin{eqnarray*}
v_{xx}
&=&2 \eps\beta\eta y (t-t_0)V^{-\beta-1}-4 \eps\beta\eta^2 y  (x-x_0)^2 (t-t_0)^2(-\beta-1)V^{-\beta-2} \\
&=&2 \eps\beta\eta (t-t_0)V^{-\beta-1}\Bigl[y+2(\beta+1)\eta (x-x_0)^2(t-t_0)\frac{y}{V}\Bigr],
\end{eqnarray*}
$$
v_{yy}
=-2\eps\beta V^{-\beta-1}+\eps\beta(\beta+1) y V^{-\beta-2}
=\eps\beta V^{-\beta-1} \Bigl[-2+(\beta+1) \frac{y}{V}\Bigr],
$$
\begin{eqnarray*}
v_{xy}
&=&2 \eps\beta\eta  (x-x_0) (t-t_0)V^{-\beta-1}-2 \eps\beta(\beta+1)\eta y  (x-x_0) (t-t_0)V^{-\beta-2} \\
&=&2 \eps\beta\eta  (x-x_0) (t-t_0)V^{-\beta-1} \Bigl[1-(\beta+1)\frac{y}{V}\Bigr].
\end{eqnarray*}
Noting that $\beta <1$ and  $\dfrac{y}{V}\leq 1$, we see that, in $Q$,
$$0\le v_{xx}\le 2 \eps\beta\eta T\bigl(d+2(\beta+1)\eta r^2T\bigr)V^{-\beta-1},
\qquad
|v_{xy}| \le 2 \eps\beta\eta  rT V^{-\beta-1}
$$
and
$$
v_{yy}\le \eps \beta(\beta-1)V^{-\beta-1}<0.$$
It follows that
\begin{eqnarray*}
\Delta_p v
&=&|\nabla v|^{p-2}\Bigl[\Delta v+(p-2)\dfrac{v_iv_jv_{ij}}{|\nabla v|^2}\Bigr]
\le |\nabla v|^{p-2}\bigl[(p-1)v_{xx}+v_{yy}+(p-2)|v_{xy}|\bigr] \\
&\le &\eps \beta |\nabla v|^{p-2}V^{-\beta-1}\Bigl[2 (p-1)\eta T\bigl(d+2(\beta+1)\eta r^2T\bigr)+(\beta-1)+2(p-2)\eta rT\Bigr].
\end{eqnarray*}
On the other hand, we have
$$|\nabla v|\ge |v_y| \ge \eps(1-\beta) V^{-\beta} \ge \eps(1-\beta)(d+\eta T r^2)^{-\beta},$$
hence
$$v_t \ge - \eps\beta\eta dr^2V^{-\beta-1}
\ge -\eps\beta  |\nabla v|^{p-2}V^{-\beta-1}\Big[\eta d r^2\left((1-\beta)\eps\right)^{2-p}(d+\eta T r^2)^{(p-2)\beta}\Bigr].$$
Therefore, 
\begin{eqnarray*}
&&v_t-\Delta_p v 
\ge \eps\beta  |\nabla v|^{p-2}V^{-\beta-1}\times \Big[-\eta d r^2\left((1-\beta)\eps\right)^{2-p}(d+\eta T r^2)^{(p-2)\beta} \\
&&\qquad\qquad\qquad\qquad\qquad-2 (p-1)\eta T\bigl(d+2(\beta+1)\eta r^2T\bigr)-2(p-2)\eta rT+(1-\beta)\Bigr].
\end{eqnarray*}
Since also
$$
|v_x| \le 2\eps\beta\eta rTV^{-\beta},\quad
|v_y|  \le\eps V^{-\beta}, $$
if we choose $\eta=\eta(p,q,d,r,T, \eps)>0$ small enough, we get that, in $Q$,
$$|\nabla v|\le 2\eps V^{-\beta}$$
and
$$
v_t-\Delta_p v 
\ge \dfrac{\eps\beta(1-\beta)}{2} |\nabla v|^{p-2}V^{-\beta-1},
$$
hence
$$
v_t-\Delta_p v \ge \dfrac{\eps\beta(1-\beta)}{2} |\nabla v|^{p-2}(2\eps)^{-\frac{\beta+1}{\beta}}|\nabla v|^{\frac{\beta+1}{\beta}}
= \dfrac{\beta(1-\beta)}{4} (2\eps)^{-\frac{1}{\beta}}|\nabla v|^q,$$
due to $\beta=1/(q-p+2)$.
If $\eps=\eps_0(p,q)>0$ is small enough, we thus obtain 
\begin{equation}
v_t-\Delta_p v\geq |\nabla v|^q.
\end{equation}

\smallskip

Now we shall check the comparison on  the parabolic boundary of $\omega_1\times (t_0,T)$.
On $\omega_1\times\left\{t_0\right\}$, choosing $c_0=2^{-\beta}\eps_0$, we have
\begin{equation}\label{initial}
u\leq c_0 y^{1-\beta}= 2^{-\beta}\eps_0 y^{1-\beta}\leq v.
\end{equation}  
On the lateral boundary  part $\left\{ (x,y)\in \mathbb{R}^2; |x-x_0|=r, 0\leq y\leq d\right\}\times (t_0,T)$, inequality \eqref{initial} holds also.  On the surface $\left\{ (x,y)\in \mathbb{R}^2; |x-x_0|\leq r,  y=0\right\}\subset \partial\Omega$, we have for $t_0<t<T$,
$$u(.,.,t)=v(.,., t)=0$$
Finally, on $\left\{ (x,y)\in \mathbb{R}^2; |x-x_0|\leq r, y=d\right\}\times (t_0,T)$, assuming in addition that 
$\eta$ satisfies $\eta\leq dT^{-1}r^{-2}$, we get 
$$u\leq c_0 d^{1-\beta}\leq \eps_0d(d+\eta r^2T)^{-\beta}\leq v.$$
Using the comparison principle, we get that 
\begin{equation}
u\leq v \qquad\text{in }\omega_1\times(t_0, T).
\end{equation}
This implies that 
$$|u_y|\leq \eps \left(\eta(r^2-|x-x_0|^2)(t-t_0)\right)^{-\beta}\leq  M_0$$
on $(B_{r/2}((x_0,0))\cap\partial\Omega)\times((t_0+T)/2,T)$
 for some constant $M_0>0$. Lemma \ref{milouar1} is then a direct consequence of Lemma \ref{localgrad}.
 \end{proof}

\section{The auxiliary function $J$ and the proof of single-point gradient blow-up}

In all this section, we fix $\rho, x_1$ with
\begin{equation}\label{defrhox1}
0<\rho< x_1< L_1
\end{equation}
and we assume that $\mu$ and $u_0\in\mathcal{V}_\mu\cap C^2(\overline\Omega)$
satisfy the assumption of Theorem~\ref{singlegbu}(ii)
i.e., the corresponding solution of (\ref{eqprincipale}) fulfills properties (\ref{don0})-(\ref{don1}).
We denote as before $T=T_{max}(u_0)$.

We consider the auxiliary function 
	$$ 
	J(x,y,t):=u_x+ c(x)d(y) F(u), 
	$$ 
with
	$$ 
	\left\{
	\begin{array}{lll}
	F(u)&=& u^{\alpha},\\
	c(x)&=&kx,\qquad k>0,\\
	d(y)&=&y^{-\gamma}
	\end{array}
	\right.
	$$ 
and
	\begin{equation}\label{defFalphagamma}
	1<\alpha<1+q-p,\qquad \gamma=(1-2 \sigma)(\alpha -1),
	\end{equation}
where	
\begin{equation}\label{defsigma}
	0<\sigma < \dfrac{1}{2(q-p +1)}
	\end{equation}
is fixed.
Letting
$$D=(0,x_1)\times(0,y_1),$$
our goal is to use a comparison principle to prove that
$$	J\leq 0 \quad\text{in } D\times (T/2, T),$$
provided $\alpha>1$ is chosen close enough to $1$ (hence making $\gamma>0$ small)
and $y_1\in (0,L_2)$ and $k>0$ are chosen sufficiently small.

 \subsection{Parabolic inequality for the auxiliary function $J$}
 
By the regularity of $u$ 
(see Theorem \ref{theorsou2}), we have
$$J\in C^{2,1}(Q_T).$$
A key step is to derive a parabolic inequality for $J$.
To this end, we define the operator
\begin{equation}\label{DefOperP}
\mathcal {P} J:=J_t- |\nabla u|^{p-2} \Delta J-(p-2)|\nabla u|^{p-4}\left\langle D^2J\,\nabla u,\nabla u\right\rangle
+\mathcal{H }\cdot\nabla J+\mathcal{A}J,
\end{equation}
where the functions $\mathcal{H}=\mathcal{H}(x,y,t)$ and $\mathcal{A}=\mathcal{A}(x,y,t)$
are given by formulae (\ref{defH})--(\ref{defA1b}) below.

\begin{prop}\label{PJneg}
Assume (\ref{defrhox1}), (\ref{defsigma}) and let $\mu, u_0$ satisfy the assumption of Theorem~\ref{singlegbu}(ii).
There exist $\alpha,\gamma$ satisfying (\ref{defFalphagamma}), $y_1\in (0,L_2)$ and $k_0>0$,
all depending only on $p,q,\Omega,\mu,\|u_0\|_{C^2},\sigma$, such that, for any $k\in (0,k_0]$, 
the function $J$ satisfies
\begin{equation}
\mathcal{P}J\leq 0\qquad\text{in }   D\times (T/2, T).
\end{equation}
Moreover, 
 \begin{equation}
 \label{propAH}
 \mathcal{H}, \mathcal{A}\in C(D\times (0, T))\quad\hbox{and}\quad
\mathcal{A}\in L^\infty(D\times (T/2, \tau)) \quad\hbox{for each $\tau\in (T/2,T)$.}
 \end{equation}
\end{prop}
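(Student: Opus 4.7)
The strategy is to decompose $J = u_x + P$ with $P(x,y,t) := c(x)\,d(y)\,F(u) = kx\,y^{-\gamma}u^\alpha$, and to exploit the fact that $u_x$ itself solves the linear parabolic equation obtained by differentiating $u_t = \Delta_p u + |\nabla u|^q$ in $x$. The principal part of that linearization is precisely the second-order operator appearing in (\ref{DefOperP}). Accordingly, I would \emph{define} $\mathcal{H}$ to absorb the drift $-q|\nabla u|^{q-2}\nabla u$ together with all first-order coefficients generated by differentiating the factors $|\nabla u|^{p-2}$ and $(p-2)|\nabla u|^{p-4}\nabla u\otimes\nabla u$ in $\Delta_p$, and set $\mathcal{A}$ to be whatever zeroth-order piece makes the identity $\mathcal{P}u_x = -\mathcal{A}\,P$ hold. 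The proof then reduces to verifying that $\mathcal{P}P - \mathcal{A}P \le 0$ on $D\times(T/2,T)$.

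The next step is to expand $\mathcal{P}P$ via the product rule. Since $c(x)=kx$ is affine, $c''\equiv 0$ eliminates several terms. The remainder splits cleanly into three groups. First, applying the principal part $-|\nabla u|^{p-2}\Delta - (p-2)|\nabla u|^{p-4}\langle D^2\cdot\,\nabla u,\nabla u\rangle$ to $F(u) = u^\alpha$ via the chain rule produces the \emph{good term}
\begin{equation*}
-(p-1)\,c\,d\,F''(u)\,|\nabla u|^{p}
= -(p-1)\alpha(\alpha-1)\,kx\,y^{-\gamma}u^{\alpha-2}|\nabla u|^{p} \;<\;0,
\end{equation*}
where positivity of $F''$ comes from $\alpha>1$ and the lower bound $|\nabla u|\ge u_y\ge\mu/2$ (from Lemma~\ref{uclass2}(ii)) keeps things uniformly elliptic. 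Second, the derivative-of-coefficient contributions involve $c'=k$, $d'=-\gamma y^{-\gamma-1}$, $d''=\gamma(\gamma+1)y^{-\gamma-2}$. Third, the reaction/transport errors arise from $cdF'(u)u_t$, from $q|\nabla u|^{q-2}\nabla u\cdot\nabla P$ via $\mathcal{H}\cdot\nabla P$, and from the $-\mathcal{A}P$ substitution. Each error term carries at least one factor of $k$, so the target inequality reduces to showing that the good term beats every error term once the parameters are calibrated.

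The heart of the argument—and its chief difficulty compared with the semilinear case \cite{single,FML}—is the scaling bookkeeping. Using the Bernstein-type estimate $|\nabla u|\le C\delta^{-\beta}$ with $\beta=1/(q-p+1)$ from (\ref{BernsteinBound}), together with $u\le C\delta^{1-\beta}$ and $u\ge\mu y/2$ (so powers of $u$ and of $y$ are comparable near the flat boundary), every term can be bounded by a constant times a power of $y$. The good term scales like $y^{\,-\gamma+(\alpha-2)(1-\beta)-p\beta}$, while the most dangerous errors—the $d''$-term and the convective term $q|\nabla u|^{q-2}\nabla u\cdot\nabla P$—scale like $y^{-\gamma-2}u^{\alpha}$ and $y^{-\gamma}u^{\alpha-1}|\nabla u|^{q-1}$ respectively; the extra $p$-Laplacian cross term $(p-2)|\nabla u|^{p-4}\langle D^2P\,\nabla u,\nabla u\rangle$, absent when $p=2$, contributes still further terms that must each be tracked. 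The calibration $\gamma=(1-2\sigma)(\alpha-1)$ with $\sigma<1/[2(q-p+1)]$ is forced by the requirement that the good-term exponent be strictly more negative than every error-term exponent as $y\to 0^+$. Choosing, in this order, $\alpha>1$ close to $1$ (so that $\gamma$ is small), then $y_1\in(0,L_2)$ small, and finally $k\in(0,k_0]$ small, closes the inequality $\mathcal{P}J\le 0$ on $D\times(T/2,T)$. All thresholds depend only on $p,q,\Omega,\mu,\|u_0\|_{C^2},\sigma$, as required.

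The regularity statement (\ref{propAH}) is then a consequence of Theorem~\ref{theorsou2}. Since $|\nabla u|\ge\mu/2$ throughout $Q_T$, the equation is uniformly parabolic on $Q_T$ and $u\in C^{2+\alpha,1+\alpha/2}_{\mathrm{loc}}(\overline\Omega\times(0,T))$, so that $\mathcal{H}$ and $\mathcal{A}$—which are built from $u$, $\nabla u$, $D^2u$, fractional powers of $|\nabla u|^2$ bounded away from $0$, and the smooth coefficients $c,d,F',F''$—are continuous on $D\times(0,T)$. For any $\tau\in(T/2,T)$, the set $\overline D\times[T/2,\tau]$ is a relatively compact subset of $\overline\Omega\times(0,T)$ on which the interior parabolic Schauder estimates yield uniform bounds on $u$ and its derivatives, giving $\mathcal{A}\in L^\infty(D\times(T/2,\tau))$. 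The main obstacle is really the second step: the sheer number of terms generated by the nonlinear diffusion, and the tightness of the exponent balance that fixes $\gamma$ and $\sigma$, make this the computational core of the paper.
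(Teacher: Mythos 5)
Your outline reproduces the general shape of the argument (perturb $u_x$ by $P=kxy^{-\gamma}u^\alpha$, use the ellipticity coming from $u_y\ge\mu/2$, the Bernstein bounds (\ref{BernsteinEst}), and a hierarchy of parameter choices), but it has a genuine gap at exactly the point the paper identifies as the crux. In your reduction $\mathcal{P}J=\mathcal{P}u_x+\mathcal{P}P$, the drift of the linearized equation (\ref{eqnw}) for $u_x$ is the vector field $B$, which contains the \emph{second derivatives} of $u$ (through $\Delta u$, $\langle D^2u\,\nabla u,\nabla u\rangle$ and $D^2u\,\nabla u$). If you put these into $\mathcal{H}$ so that $\mathcal{P}u_x$ reduces to a zero-order term, then the remaining inequality $\mathcal{P}P-\mathcal{A}P\le 0$ contains $\mathcal{H}\cdot\nabla P$, i.e. explicit terms of the form $(\hbox{powers of }|\nabla u|)\times D^2u\times\nabla P$ with $\nabla P\neq 0$; these are neither proportional to $J$ or $\nabla J$ nor sign-controlled, and $D^2u$ admits no bound that is uniform as $t\to T$ near the blow-up point, so no choice of $\alpha,\gamma,k,y_1$ can absorb them into the good terms. (Your prescription ``set $\mathcal{A}$ so that $\mathcal{P}u_x=-\mathcal{A}P$'' is also circular: with $\mathcal{H}$ equal to the full drift it forces $\mathcal{A}\equiv 0$, and otherwise $\mathcal{A}$ is not a genuine coefficient but depends on $J$ itself.) The paper's way out is the device you are missing: the pieces of $D^2u$ equal to $u_{xx}$ and $u_{xy}$ are rewritten through $\nabla u_x=\nabla J-\nabla(cdF)$ and thus absorbed into $\mathcal{H}\cdot\nabla J$ plus lower-order terms, while $u_{yy}$ — which cannot be expressed through $\nabla J$ — is eliminated by solving the PDE for it, $u_{yy}=\bigl(u_t-|\nabla u|^q-\nabla u_x\cdot M\bigr)/w$; this is where the bound $|u_t|\le\tilde C_1$ of Lemma~\ref{estimderivtemps} enters essentially, and it is the source of the terms $(3_t),(4_t),(5_t),(6_t)$ and of the explicit, bounded formulas (\ref{defH})--(\ref{defA1b}) for $\mathcal{H},\mathcal{A}$ needed for (\ref{propAH}). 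Without this step the proposed proof cannot be closed.

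Two secondary points in your bookkeeping are also off. First, the $d''$-term is not a ``dangerous error'': since $d''=\gamma(\gamma+1)y^{-\gamma-2}>0$, the contribution $-(p-1)cd''F|\nabla u|^{p-4}u_y^2$ is one of the \emph{good} negative terms, and in the paper it (together with the term $-(q-p+1)\alpha\,|\nabla u|^q/u$ coming from the differentiated Hamiltonian) does most of the absorbing, while the $F''$-term you single out is essentially used up in one Young inequality against the cross term $2\gamma\alpha(p-1)|\nabla u|^{p-2}u_y/(yu)$, leaving only the margin $-2\gamma\sigma(p-1)|\nabla u|^{p-4}u_y^2/y^2$. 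Second, it is not true that every error term carries a factor of $k$: after dividing by $cdF$, the $u_t$-induced term $(p-2)\alpha K/u$ and the terms proportional to $\gamma$ (e.g. $q^2\gamma C_0^{q-p+1}|\nabla u|^q/(2\sigma u)$) contain no $k$, which is precisely why the paper must choose, in this order, $\alpha$ close to $1$ (so $\gamma$ small), then $k_0$ small, then $y_1$ small; a smallness argument resting on $k$ alone does not close the inequality.
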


The proof of Proposition~\ref{PJneg} is very long and technical.
In order not to disrupt the main line of argument,
we postpone it to section~9 and now present the rest of the proof of Theorem~\ref{singlegbu}(ii).

\subsection{Boundary conditions for the auxiliary function $J$}

The verification of the appropriate boundary and initial conditions for the function $J$
depends on an essential way on the applicability 
to $u_x$ of the Hopf boundary lemma
at the points $(x_1,0)$ and $(0,y_1)$, up to $t=T$.
To this end, besides the nondegeneracy of problem (\ref{eqprincipale}), guaranteed by (\ref{don1}),
we also need the following local regularity lemma, which ensures that $D^2u$ remains bounded up to $t=T$
away from the gradient blow-up set.

\begin{lem}\label{lemRegulBoundaryCondJ}
Let $\rho\in (0,L_1)$ and let $\mu, u_0$ satisfy the assumption of Theorem~\ref{singlegbu}(ii).
Let $\omega'\subset\omega\subset\Omega$
be such that $dist(\omega',\Omega\setminus\omega)>0$ and $t_0\in (0,T)$.
If 
$$\sup_{\omega\times(0,T)}|\nabla u|<\infty,$$
then
$$\sup_{\omega'\times(t_0,T)}|D^2u|<\infty.$$
\end{lem}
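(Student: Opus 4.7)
The strategy is to use that on $\omega\times(0,T)$ the equation is uniformly parabolic with nondegenerate, smooth structure, so standard interior parabolic regularity yields local $C^{2+\beta,1+\beta/2}$ bounds uniformly up to $t=T$. Rewriting (\ref{eqprincipale})$_1$ in non-divergence form gives
$$u_t - a_{ij}(\nabla u)\,u_{ij} = |\nabla u|^q,\qquad a_{ij}(\xi)=|\xi|^{p-2}\delta_{ij}+(p-2)|\xi|^{p-4}\xi_i\xi_j.$$
Setting $M:=\sup_{\omega\times(0,T)}|\nabla u|$, the hypothesis, together with the lower bound $|\nabla u|\ge u_y\ge \mu/2$ from (\ref{don1}), yields
$$\lambda|\zeta|^2 \le a_{ij}(\nabla u)\,\zeta_i\zeta_j \le \Lambda|\zeta|^2\quad\text{in }\omega\times(0,T),$$
for constants $0<\lambda\le\Lambda<\infty$ depending only on $p,\mu,M$; and since $\xi\mapsto a_{ij}(\xi)$ is smooth on the annulus $\{\mu/2\le|\xi|\le M\}$, the equation is uniformly parabolic on $\omega\times(0,T)$ with no degeneration as $t\to T$.

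Next I would apply the interior H\"older gradient estimate for quasilinear uniformly parabolic equations (\cite{lieb, lady}, as used already in the proof of Theorem~\ref{theorsou2}) on an intermediate set $\omega''$ with $\omega'\Subset\omega''\Subset\omega$. Because the structural and ellipticity bounds hold on the whole cylinder $\omega\times(0,T)$, these estimates are one-sided in time and produce
$$\|\nabla u\|_{C^{\beta,\beta/2}(\omega''\times(t_0/2,T))}\le C$$
for some $\beta\in(0,1)$ and $C$ depending only on $p,q,\mu,M,\omega,\omega''$ and $t_0$. With $\nabla u$ uniformly H\"older continuous, the coefficients $a_{ij}(\nabla u)$ and the source term $|\nabla u|^q$ lie in $C^{\beta,\beta/2}(\omega''\times(t_0/2,T))$ uniformly, and classical linear interior parabolic Schauder estimates then yield $\|u\|_{C^{2+\beta,1+\beta/2}(\omega'\times(t_0,T))}\le C'$, whence $\sup_{\omega'\times(t_0,T)}|D^2u|<\infty$.

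The main obstacle is uniformity of the above constants as $t\to T^-$: a priori, the quasilinear regularity theory is quantitative, but its constants could in principle deteriorate if either the ellipticity or the smoothness of the coefficients degenerates. What makes everything work is the two-sided control on $|\nabla u|$: it is pinched between $\mu/2$ and $M$ throughout $\omega\times(0,T)$, so that all constants entering the H\"older and Schauder estimates are uniform in time; consequently nothing blows up as $t\to T^-$, and the resulting $D^2u$ bound persists on all of $\omega'\times(t_0,T)$, as claimed.
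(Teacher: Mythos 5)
Your proposal is correct and reaches the conclusion by the same two-stage scheme as the paper (a space--time H\"older bound on $\nabla u$ that is uniform up to $t=T$, followed by linear interior Schauder estimates for the nondivergence-form equation), but the first stage is carried out differently. The paper does not invoke the parabolic quasilinear $C^{1,\beta}$ interior theory directly: it freezes time, views the equation as the elliptic equation $-\nabla\cdot(|\nabla u|^{p-2}\nabla u)=|\nabla u|^q-u_t$ with a bounded right-hand side (this is where the uniform bound on $u_t$ from Lemma~\ref{estimderivtemps} and the lower bound $u_y\ge\mu/2$ enter), applies the elliptic estimate of \cite[Theorem~V.5.2]{lady} on each time slice to get $\|\nabla u(\cdot,t)\|_{C^\theta(\overline{\omega''})}\le C$ uniformly for $t<T$, and then uses the interpolation result \cite[Lemma~II.3.1]{lady} together with the Lipschitz bound in time (again from the $u_t$ bound) to obtain H\"older continuity of $\nabla u$ in time; Schauder then finishes as in your argument. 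Your route replaces this slicing-plus-interpolation step by the interior gradient H\"older estimate for uniformly parabolic quasilinear equations, which is legitimate here because the divergence structure is available and $|\nabla u|$ is pinched in $[\mu/2,M]$ on $\omega\times(0,T)$, so interior estimates are indeed one-sided in time and do not degenerate as $t\to T^-$; an advantage is that you never need the $u_t$ bound. The one point you should make explicit is that the theorems in \cite{lady,lieb} you appeal to impose structure conditions for all gradient values, so you must first modify the nonlinearities outside the annulus $\{\mu/2\le|\xi|\le M\}$ (exactly the truncation device the paper uses with the functions $b$ and $F$ in the proof of Theorem~\ref{theorsou2}(i)) and note that $u$ still solves the modified, globally uniformly parabolic equation on $\omega\times(0,T)$; with that standard adjustment your argument is complete.
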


\begin{proof}
Introduce an intermediate domain $\omega''$ with 
$\omega'\subset\omega''\subset\omega$, such that $dist(\omega'',\Omega\setminus\omega)>0$
and $dist(\omega',\Omega\setminus\omega'')>0$.
Write the PDE in (\ref{eqprincipale})  as
$$-\nabla\cdot(|\nabla|^{p-2}\nabla u)=|\nabla u|^q-u_t.$$
Using $|\nabla u|\ge\partial_yu\ge \mu/2$ (cf.~(\ref{don1})) and the boundedness of $u_t$ in $Q_T$ (cf. Lemma~\ref{estimderivtemps}),
it follows from the elliptic estimate in \cite[Theorem~V.5.2]{lady} that there exists $\theta\in (0,1)$ such that
$$\|\nabla u(\cdot,t)\|_{C^\theta(\overline\omega'')}\le C,\quad t_0/2\le t<T.$$
The boundedness of $u_t$ in $Q_T$ and the interpolation result in \cite[Lemma~II.3.1]{lady} then guarantee the estimate
$$\|\nabla u(x,y,\cdot)\|_{C^\beta([t_0/2,T))}\le C,\quad (x,y)\in \overline\omega',$$
where $\beta=\theta/(1+\theta)$.
Therefore $\|\nabla u\|_{C^\beta(\overline\omega'\times [t_0/2,T))}\le C$.
The conclusion now follows by applying standard Schauder parabolic estimates to the PDE in (\ref{eqprincipale}),
rewritten under the form
$$ 
u_t-a_{ij}u_{ij}=f,
\quad\hbox{where }
a_{ij}=|\nabla u|^{p-2}\Bigl[\delta_{ij}+(p-2)\dfrac{u_iu_j}{|\nabla u|^2}\Bigr],\quad f=|\nabla u|^q.
$$ 
Indeed, the matrix $(a_{ij})=(a_{ij}(x,y,t))$ is uniformly elliptic due to (\ref{don1}) and
\begin{equation}\label{UnifEllipt}
\begin{array}{lll}
a_{ij}\xi_i\xi_j
&=&|\nabla u|^{p-2}|\xi|^2+(p-2)|\nabla u|^{p-4}|\nabla u\cdot\xi|^2\\
\noalign{\vskip 2mm}
&\ge& |\nabla u|^{p-2}|\xi|^2\geq (\mu/2)^{p-2}|\xi|^2,
\end{array}
\end{equation}
and there exists $\nu\in(0,1)$ such that $a_{ij},f\in C^\nu(\overline\omega'\times [t_0/2,T'])$,
for each $T'<T$, with norm independent of $T'$.
\end{proof}

\begin{lem}\label{lemBoundaryCondJ}
Assume (\ref{defrhox1})--(\ref{defsigma}), let $y_1\in (0,L_2)$ and let $\mu, u_0$ satisfy the assumption of Theorem~\ref{singlegbu}(ii).
Then
\begin{equation}\label{contJ}
J\in C(\overline{D}\times(0,T))
\end{equation}
and there exists $k_1>0$ (depending in particular on $y_1$) 
such that, for any $k\in (0,k_1]$, the function $J$ satisfies
\begin{equation}\label{boundaryJ}
J\le 0\quad\text{on $\partial D\times (T/2,T)$}.
\end{equation}
\end{lem}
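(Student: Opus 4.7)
I would first establish the continuity (\ref{contJ}) and then verify $J\le 0$ on each of the four sides of $\partial D$ separately. The main tools are: the $x$-symmetry (\ref{don3}) and sign condition (\ref{don4}); the nondegeneracy (\ref{don1}); continuity of $u_x$ up to $\overline\Omega\times(0,T)$ from Theorem~\ref{theorsou2}(iii); the singular bound $u\le C\delta^\kappa$ with $\kappa=(q-p)/(q-p+1)$ from (\ref{BernsteinBound}); and, since $GBUS(u_0)\subset[-\rho,\rho]\times\{0\}$ by Theorem~\ref{singlegbu}(i), the boundedness of $D^2u$ up to $t=T$ on neighborhoods of compact sets disjoint from the blow-up set, via Lemma~\ref{lemRegulBoundaryCondJ} and its boundary analogue. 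The continuity (\ref{contJ}) follows from that of $u_x$ together with $y^{-\gamma}u^\alpha\le Cy^{\alpha\kappa-\gamma}\to 0$ as $y\to 0$, which holds for $\alpha$ sufficiently close to $1$ so that $\alpha\kappa-\gamma>0$; thus $kxy^{-\gamma}u^\alpha$ extends continuously by $0$ at $\{y=0\}$.

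\textbf{The easy sides.} On $\{x=0\}\times(T/2,T)$, the prefactor $c(x)=kx$ vanishes and $u_x\equiv 0$ by (\ref{don3}), so $J\equiv 0$. On $\{y=0\}\times(T/2,T)$, the flat Dirichlet value $u=\mu y$ forces $u\equiv 0$ and hence $u_x\equiv 0$, while $y^{-\gamma}u^\alpha\to 0$ as above; $J$ thus extends continuously by $0$.

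\textbf{The side $\{y=y_1\}$.} Choose $y_1\in(0,2L_2)$ so that $[0,x_1]\times\{y_1\}$ is a compact subset of $\Omega$ disjoint from $\partial\Omega\cup GBUS(u_0)$; Lemma~\ref{lemRegulBoundaryCondJ} yields bounded $D^2u$, continuous up to $t=T$, on a neighborhood. Differentiating (\ref{eqprincipale}) in $x$ gives a linear equation for $v=u_x$ whose principal part $|\nabla u|^{p-2}\bigl[\Delta v+(p-2)|\nabla u|^{-2}u_iu_jv_{ij}\bigr]$ is uniformly parabolic by (\ref{don1}) and (\ref{UnifEllipt}), with coefficients continuous up to $t=T$. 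Since $v\le 0$ in $\Omega_+\times(0,T)$ and $v=0$ on $\{x=0\}$, the parabolic Hopf lemma at $(0,y_1)$, combined with continuity of $u_{xx}$ up to $t=T$, gives $u_{xx}(0,y_1,t)\le-\delta<0$ uniformly in $t\in[T/2,T]$. Together with the strict negativity of $u_x$ for $x\in(0,x_1]$ (strong maximum principle) and a compactness argument, this yields $-u_x(x,y_1,t)\ge cx$ on $[0,x_1]\times\{y_1\}\times[T/2,T]$. Since $y_1^{-\gamma}u^\alpha$ is bounded on this set, choosing $k$ small enough ensures $kxy_1^{-\gamma}u^\alpha\le-u_x$, i.e., $J\le 0$.

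\textbf{The side $\{x=x_1\}$ and the main obstacle.} Since $x_1>\rho$, the point $(x_1,0)\in\partial\Omega$ lies outside $GBUS(u_0)$, so a boundary variant of Lemma~\ref{lemRegulBoundaryCondJ} provides bounded $D^2u$ up to $t=T$ in a neighborhood of $\{x_1\}\times[0,y_1]$. Apply the parabolic Hopf lemma to $v=u_x$ at the boundary point $(x_1,0)$: $v(x_1,0,t)=0$ is the tangential derivative of $u\equiv 0$ on the flat part, the outward normal is $(0,-1)$, and $v\le 0$ in $\Omega_+$; we therefore get $u_{xy}(x_1,0,t)\le-\delta<0$ uniformly in $t\in[T/2,T]$, whence $-u_x(x_1,y,t)\ge cy$ on $\{x_1\}\times[0,y_1]\times[T/2,T]$. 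On the other hand, the boundedness of $|\nabla u|$ near $(x_1,0)$ combined with $u(x_1,0,t)=0$ gives $u(x_1,y,t)\le C_*y$, so $kx_1y^{-\gamma}u^\alpha\le kC'y^{\alpha-\gamma}$; since $\alpha-\gamma-1=2\sigma(\alpha-1)>0$, we conclude $-u_x-kx_1y^{-\gamma}u^\alpha\ge y\bigl(c-kC''y^{2\sigma(\alpha-1)}\bigr)\ge 0$ on $[0,y_1]$ for $k$ small enough. The principal difficulty is to secure these Hopf-type strict inequalities \emph{uniformly up to $t=T$}, and in particular to handle the boundary/corner configuration at $(x_1,0)$; this is where the parabolic version of Serrin's corner lemma (Proposition~\ref{CornerLemma} of Appendix~2) enters, in conjunction with the regularity of Lemma~\ref{lemRegulBoundaryCondJ}.
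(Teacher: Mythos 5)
Your proposal follows essentially the same route as the paper: continuity of $J$ from the continuity of $u_x$ (Theorem~\ref{theorsou2}) together with $y^{-\gamma}u^\alpha\to 0$ as $y\to 0$ (note $\gamma<\alpha-1<\alpha\kappa$ holds automatically, so no extra restriction on $\alpha$ is needed); $J=0$ on $\{x=0\}$ and $\{y=0\}$ by symmetry and the Dirichlet condition; and on the two remaining sides, boundedness of $D^2u$ away from $GBUS(u_0)$ up to $t=T$ (Lemma~\ref{lemRegulBoundaryCondJ}) plus the strong maximum principle and the Hopf boundary point lemma applied to $u_x$, giving $u_x\le -c_1x$ on $\{y=y_1\}$ and $u_x\le -c_1y$ on $\{x=x_1\}$, after which $\alpha>\gamma+1$, the bound $u\le c_2y$ near $(x_1,0)$ and smallness of $k$ close the estimate — exactly as in the paper.

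The one point where you deviate is the final remark that the parabolic Serrin corner lemma (Proposition~\ref{CornerLemma}) is needed to handle the configuration at $(x_1,0)$ and the uniformity up to $t=T$. In the paper that lemma is used only for the initial condition at $t=T/2$ in Lemma~\ref{lemInitCondJ}, where the genuine corner $(0,0)$ of $D$ forces a bound of the form $u_x\le -c_3xy$; its structural hypotheses \eqref{mikef} are verified there using $u_x=u_{xy}=0$ on $\{x=0\}$ and would not hold relative to $(x_1,0)$. At $(x_1,0)$ no corner estimate is required: $u_x$ vanishes on the flat boundary portion $\{y=0\}$, $(x_1,0)$ is a regular (non-corner) boundary point for the domain $\Omega_+$ where $u_x\le 0$, and only a bound linear in $y$ is needed, so the ordinary Hopf lemma suffices; the uniformity of the constants as $t\to T$ comes from the uniform ellipticity \eqref{UnifEllipt} and the coefficient bounds supplied by Lemma~\ref{lemRegulBoundaryCondJ} on $(T/4,T)$, not from the corner lemma. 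With that correction, your argument coincides with the paper's proof.
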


\begin{proof}
Since $u=0$ for $y=0$ and $|\nabla u|\leq C(\tau)$ in $\Omega\times [0,\tau]$ for each $\tau<T$, we have
$$u\leq C(\tau)y \qquad\text{in } D\times [0, \tau].$$
Due to $\gamma<\alpha$, we may therefore extend the function $c(x)d(y) F(u)$ continuously to be $0$ for $y=0$.
Property (\ref{contJ}) then follows from the regularity of $u$ 
 (see Theorem \ref{theorsou2})
and we have
\begin{equation}\label{atsou1}
J=0  \quad\text{on } (0,x_1)\times\left\{0\right\}\times(T/2,T).
\end{equation}

By (\ref{don4}), we have
$$u_x=0\quad\text{on  }  \{0\}\times (0,y_1)\times(0,T),$$
hence
\begin{equation}\label{atsou2}
J=0  \quad\text{on } \left\{0\right\}\times(0, y_1)\times(T/2,T).
\end{equation}

Next, the function $w=u_x$ is $\le 0$ in $\Omega^+\times(0, T)$ (cf.~(\ref{don4})) and satisfies there:
\begin{equation}\label{eqnw}
w_t= a_{ij}(x,y,t) w_{ij}+ B(x,y,t)\cdot\nabla w,  
\end{equation}
with
\begin{eqnarray*}
a_{ij}(x,y,t)&=&|\nabla u|^{p-2}\left[\delta_{ij}+(p-2)\dfrac{u_iu_j}{|\nabla u|^2}\right],\\
B(x,y,t)&=&(p-2)|\nabla u|^{p-4}\nabla u\Delta u+(p-2)(p-4)|\nabla u|^{p-6}\left\langle D^2 u\,\nabla u, \nabla u\right\rangle \nabla u\\
&+& q|\nabla u|^{q-2}\nabla u +2(p-2)|\nabla u|^{p-4}( D^2 u\,\nabla u).
\end{eqnarray*}
Fix $\rho<x_3<x_2<x_1$.
Since $GBUS\subset [-\rho, \rho]\times\left\{0\right\}$, we have 
$$|\nabla u|\leq C\quad\hbox{ in }\bigl(\Omega\backslash\left\{(-x_3, x_3)\times(0, y_1/3)\right\}\bigr)\times(0,T).$$
It follows from Lemma~\ref{lemRegulBoundaryCondJ} that
$$|D^2 u|\leq C\quad\hbox{ in }\bigl(\Omega\backslash\left\{(-x_2, x_2)\times(0, y_1/2)\right\}\bigr)\times(T/4,T),$$
hence 
$$|B|\leq C\quad\hbox{ in }\bigl(\Omega\backslash\left\{(-x_2, x_2)\times(0, y_1/2)\right\}\bigr)\times(T/4,T).$$
Moreover, the matrix $A(x,y,t)$ is uniformly elliptic (cf.~(\ref{UnifEllipt})).
We may thus apply the strong maximum principle and the Hopf boundary point Lemma \cite[Theorem~6~p.~174]{PW},
to get
\begin{eqnarray*}
u_x&\leq& -c_1y\quad\text{on } \left\{x_1\right\}\times(0,y_1)\times(T/2,T),\\
u_x&\leq&-c_1x \quad\text{on } (0,x_1)\times\left\{y_1\right\}\times(T/2,T).
\end{eqnarray*}
Also, since $x_1>\rho$ and $u(x,0,t)=0$, we get that, for some $c_2>0$,
$$
u\leq c_2y\quad\text{on } \{x_1\}\times(0,y_1)\times(T/2,T).
$$
Consequently, using $\alpha>\gamma+1$ and (\ref{minmaxu_0}), we have for $0<k\le k_1(y_1)$ sufficiently small
\begin{eqnarray*}
J(x, y_1,t)&\leq&-c_1x+kxy_1^{-\gamma} \left\|u_0\right\|_{\infty}^{\alpha}\le 0 \quad\text{on } (0,x_1)\times\left\{y_1\right\}\times(T/2,T),\label{atsou3}\\
J(x_1, y,t)&\leq&-c_1y+kx_1y^{\alpha-\gamma}c_2^{\alpha}\le 0 \quad\text{on } \{x_1\}\times(0,y_1)\times(T/2,T).\label{atsou4}
\end{eqnarray*}
This, along with (\ref{atsou1})-(\ref{atsou2}), proves (\ref{boundaryJ}).
\end{proof}

\subsection{Initial conditions for $J$}

\begin{lem}\label{lemInitCondJ}
Assume (\ref{defrhox1})--(\ref{defsigma}) and let $\mu, u_0$ satisfy the assumption of Theorem~\ref{singlegbu}(ii).
There exists $k_2>0$ such that, for any $k\in (0,k_2]$, the function $J$ satisfies
$$J(x,y,T/2)\le 0\quad\text{in $[0,x_1]\times[0,L_2]$}.$$
\end{lem}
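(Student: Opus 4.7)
At $t=T/2<T$, the solution $u$ is smooth in $\overline\Omega$ (Theorem~\ref{theorsou2}) and the PDE is uniformly parabolic thanks to~(\ref{don1}). Observe that $\overline D=[0,x_1]\times[0,L_2]\subset\overline\Omega$, with only the segment $[0,x_1]\times\{0\}$ lying on $\partial\Omega$. My plan is to establish a sharp one-sided bound
$$u_x(x,y,T/2)\le -C_1\,xy \qquad\text{on }\overline D$$
and then show that the other term of $J$ is of size $O(k\,xy)$, so that taking $k$ small forces $J\le 0$. Two parts of $\overline D$ give $J=0$ for free: on $\{0\}\times[0,L_2]$, since $x=0$ and $u_x(0,y,t)=0$ by~(\ref{don3}); on $[0,x_1]\times\{0\}$, since the Dirichlet condition forces $u=0$ on the flat boundary, whence both $u^\alpha=0$ and the tangential derivative $u_x=0$.

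For the quantitative bound, set $v:=u_x(\cdot,\cdot,T/2)$: by~(\ref{don4}), $v\le 0$ on $\overline D$, vanishing on the two axes. The function $w=u_x$ solves the linear parabolic equation~(\ref{eqnw}), uniformly parabolic with bounded coefficients on a neighborhood of $\overline D$ in view of Lemma~\ref{lemRegulBoundaryCondJ} together with~(\ref{don1}). The strong maximum principle yields $v<0$ in the open rectangle; Hopf's lemma at $x=0$ gives $u_{xx}(0,y,T/2)<0$ for $y\in(0,L_2]$, and at the flat boundary $y=0$ gives $u_{xy}(x,0,T/2)<0$ for $x\in(0,x_1]$. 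At the corner $(0,0)$, where both vanishing conditions meet, I invoke the parabolic corner lemma (Proposition~\ref{CornerLemma}) to obtain $u_{xxy}(0,0,T/2)<0$. Using $u_x\in C^{2+\beta,1+\beta/2}$ up to the flat boundary (Theorem~\ref{theorsou2}(iii)) and Taylor expansion, the ratio $-v(x,y)/(xy)$ extends continuously to $\overline D$; its boundary values are $-u_{xx}(0,y,T/2)/y$ and $-u_{xy}(x,0,T/2)/x$ on the axes (both positive by Hopf and, by L'H\^opital, equal to $-u_{xxy}(0,0,T/2)>0$at the corner), while on the interior sides $\{x_1\}\times[0,L_2]$ and $[0,x_1]\times\{L_2\}$ positivity follows from the strict negativity of $v$ there (strong maximum principle, plus Hopf at $y=0$). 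Compactness of $\overline D$ then delivers the required constant $C_1>0$.

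To finish, the continuity of $u(\cdot,T/2)\in C^1(\overline D)$ together with $u\equiv 0$ on $[0,x_1]\times\{0\}$ gives $u(x,y,T/2)\le C_2\,y$, hence
$$kxy^{-\gamma}u(x,y,T/2)^\alpha \le kC_2^\alpha\, xy^{\alpha-\gamma}.$$
The choice (\ref{defFalphagamma})--(\ref{defsigma}) yields $\alpha-\gamma=1+2\sigma(\alpha-1)>1$, so $y^{\alpha-\gamma}\le L_2^{2\sigma(\alpha-1)}\,y$ on $[0,L_2]$, and combining with the previous step,
$$J(x,y,T/2)\le (-C_1+C'k)\,xy \qquad\text{on }\overline D,$$
for some $C'>0$ depending on $C_2,L_2,\alpha,\gamma$ but not on $k$. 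Taking $k_2:=C_1/(2C')$ yields $J(\cdot,T/2)\le 0$ for all $k\in(0,k_2]$. The main obstacle is the strict corner sign $u_{xxy}(0,0,T/2)<0$: neither Hopf nor the strong maximum principle applies directly at $(0,0)$, which is precisely why the paper develops the parabolic version of Serrin's corner lemma in Appendix~2; once that is granted, the remaining Taylor analysis and the dominance $\alpha-\gamma>1$ do the rest.
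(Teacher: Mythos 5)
Your overall strategy coincides with the paper's: obtain $u_x(\cdot,\cdot,T/2)\le -c\,xy$ on $[0,x_1]\times[0,L_2]$ by applying the parabolic corner lemma to $w=u_x$, then absorb $kxy^{-\gamma}u^\alpha\le kC^\alpha xy^{\alpha-\gamma}$ using $u(\cdot,T/2)\le Cy$ and $\alpha-\gamma>1$, and take $k$ small; that last absorption step is exactly as in the paper and is correct. However, there is a genuine gap at the heart of the argument: you invoke Proposition~\ref{CornerLemma} (``once that is granted'') without verifying its structural hypotheses \eqref{mikef} for the equation (\ref{eqnw}) satisfied by $u_x$, namely $a_{12}+a_{21}\ge -C(x\wedge y)$, $B_1\ge -Cx$ and $B_2\ge -Cy$ near the two axes. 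These are not automatic, and checking them is the main content of the paper's proof: one uses $u_x=0$ on both axes together with the up-to-the-boundary regularity $u_x\in C^{2+\beta,1+\beta/2}_{loc}(\overline\Omega\times(0,T))$ of Theorem~\ref{theorsou2}(iii) to get $a_{12}+a_{21}=2(p-2)|\nabla u|^{p-4}u_xu_y\ge -C(x\wedge y)$ and $|u_x|+|u_{xy}|\le Cx$, whence $B_1\ge -Cx$; and, most delicately, one evaluates $B_2$ on $\{y=0\}$ via the PDE (there $u_t=u_x=u_{xx}=0$ and $(u_y)^q=-(p-1)(u_y)^{p-2}u_{yy}$), which gives $B_2(x,0,t)=(q+2-p)(u_y)^{q-1}\ge (q+2-p)(\mu/2)^{q-1}>0$ --- precisely where the nondegeneracy (\ref{don1}) enters --- and then $B_2\ge -Cy$ by continuity. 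Without these verifications the corner lemma cannot be applied, so the step you yourself identify as the main obstacle is acknowledged but not actually overcome.

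Two smaller remarks. First, your detour through the single corner value $u_{xxy}(0,0,T/2)<0$ followed by a Taylor/compactness reconstruction is redundant: the conclusion (\ref{conclCorner}) of Proposition~\ref{CornerLemma} is already the bound $z\le -c_0xy$ on the whole rectangle $(0,X_1)\times(0,Y_1)$ for $t\in[t_0,\tau_2)$, so once its hypotheses are checked you can quote it directly (the paper applies it on the larger rectangle $(0,L_1)\times(0,2L_2)$ over the time interval $(T/4,3T/4)$ and reads off the estimate at $t=T/2$). Second, Lemma~\ref{lemRegulBoundaryCondJ} is not the right justification for bounded coefficients near the flat boundary: it is an interior estimate and requires a gradient bound up to $t=T$, which fails near the possible blow-up segment $[-\rho,\rho]\times\{0\}$; what is actually needed is only that the relevant time window stays away from $T$, so that (\ref{uC21}) and (\ref{uxC21}), combined with (\ref{don1}) and (\ref{UnifEllipt}), give uniformly parabolic equations with coefficients bounded (indeed H\"older continuous) on $\overline\Omega\times[T/4,3T/4]$, which is what both your Hopf applications and the corner lemma require.
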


The proof relies on a parabolic version of the Serrin corner lemma applied to $u_x$.
This is provided by Proposition~\ref{CornerLemma},
which we state and prove in Appendix 2.

\begin{proof}
The function $z=u_x$ satisfies equation (\ref{eqnw}).
We shall apply Proposition~\ref{CornerLemma} to this equation,
with $\tau_1=T/4$, $\tau_2=3T/4$,  $X_1=x_1$, $Y_1=L_2$, $\hat X_1=L_1$, $\hat Y_1=2L_2$.
We thus need to check the assumption \eqref{mikef}. 
Let us denote $\hat D_T=(0, \hat {\cb X_1})\times (0, \hat {\cb Y}_1)\times (T/4,3T/4)$.

For $x=0$ or $y=0$, we have $u_x=0$, hence $a_{12}=a_{21}=(p-2)|\nabla u|^{p-2}u_xu_y=0$. 
Due to the regularity of $u$ (cf.~(\ref{uC21})), we deduce that
\begin{equation}\label{hypcorner1}
a_{12}+a_{21}\ge -C(x\wedge y)\quad\hbox{ in $\overline{\hat D_T}$}.
\end{equation}

On the other hand, for $x=0$ and $0<y<\hat Y_1$, we have $u_{xy}=u_x=0$. 
Also, by (\ref{uxC21}), we have $|(u_{xy})_x|=|(u_x)_{yx}|\le C$ in~$\overline{\hat D_T}$.
Consequently
$|u_x|+|u_{xy}|\le Cx$ in~$\overline{\hat D_T}$. Using (\ref{uC21}) and (\ref{don1}), we deduce
\begin{align}
B_1
&=(p-2)|\nabla u|^{p-4}(\Delta u)u_x+(p-2)(p-4)|\nabla u|^{p-6}\left\langle D^2u\, \nabla u, \nabla u\right\rangle u_x \notag \\
&\qquad +q|\nabla u|^{q-2}u_x +2(p-2)|\nabla u|^{p-4}\bigl(u_{xx}u_x+u_{xy}u_y\bigr) \notag \\
&\ge -Cx
\quad\hbox{ in $\overline{\hat D_T}$}. \label{hypcorner2}
\end{align}

Next, for $y=0$ and $0<x<\hat X_1$, we have $u_t=0$ and $u_x=u_{xx}=0$. Recalling (\ref{uC21}), we thus have
\begin{align*}
(u_y)^q&=|\nabla u|^q=u_t-\Delta_p u=-|\nabla u|^{p-2}\left[\Delta u+(p-2)\dfrac{\left\langle D^2u\, \nabla u, \nabla u\right\rangle}{|\nabla u|^2}\right]\\
&=-|\nabla u|^{p-2}\left[u_{xx}+u_{yy}
+(p-2)\dfrac{u_{xx}u_x^2+2u_{xy}u_xu_y+u_{yy}u_y^2}{u_x^2+u_y^2}\right]
=-(p-1)(u_y)^{p-2}u_{yy}.
\end{align*}
It follows that, for $0<x<\hat X_1$ and $t\in [T/4,3T/4]$,
\begin{align*}
B_2(x,0,t)
&=(p-2)(p-4)|\nabla u|^{p-6}\bigl(u_{xx}u_x^2+2u_{xy}u_xu_y+u_{yy}u_y^2\bigr)u_y\\
&\quad +(p-2)|\nabla u|^{p-4}(u_{xx}+u_{yy})u_y+q|\nabla u|^{q-2}u_y \\
&\quad +2(p-2)|\nabla u|^{p-4}\bigl(u_{yx}u_x+u_{yy}u_y\bigr)\\
&=(p-2)(p-4)(u_y)^{p-3}u_{yy}+(p-2)(u_y)^{p-3}u_{yy}\\
&\quad +q(u_y)^{q-1} +2(p-2)(u_y)^{p-3}u_{yy}\\
&=(p-2)(p-1)(u_y)^{p-3}u_{yy}+q(u_y)^{q-1}=(q+2-p)(u_y)^{q-1}\\
&\ge (q+2-p)(\mu/2)^{q-1}>0.
\end{align*}
Therefore, owing to (\ref{uC21}), there exists $\eta>0$ such that
$$B_2(x,y,t)\ge 0 \quad\hbox{ on $(0,\hat X_1)\times[0,\eta]\times [T/4,3T/4]$},$$
which implies
\begin{equation}\label{hypcorner3}
B_2\ge -Cy\quad\hbox{ in $\overline{\hat D_T}$}.
\end{equation}

In view of (\ref{hypcorner1})-(\ref{hypcorner3}),
we may thus apply Proposition~\ref{CornerLemma} to deduce
$$
u_x(x,y,T/2)\leq -c_3 xy \quad\text{in $[0,x_1]\times[0,L_2]$.}
$$
Let $C:=\|\nabla u(\cdot,T/2)\|_\infty$. Since $\alpha>\gamma+1$, we get that, for $k\in(0,k_2]$ with $k_2>0$ small enough,
$$
J(x,y,T/2)\leq -c_3 xy+kx C^{\alpha}y^{\alpha-\gamma}\leq 
 [k C^{\alpha}L_2^{\alpha-\gamma-1}-c_3]xy\leq 0 \quad\text{in $[0,x_1]\times[0,L_2]$.}
$$
\end{proof}

\subsection{Proof of Theorem \ref{singlegbu}(ii)}

Let $\alpha,\gamma, y_1, k_0$ be given by Proposition~\ref{PJneg} and let $k_1, k_2$
be given by Lemmas~\ref{lemBoundaryCondJ}-\ref{lemInitCondJ}. We take $k=\min(k_0,k_1,k_2)$.
By these results and the maximum principle,
 we have
	\begin{equation}\label{jestneg}
	J\leq 0 \qquad\text{in } D\times (T/2, T).
	\end{equation}
Integrating inequality (\ref{jestneg}) over $(0,x)$ for $0<x<x_1$, with fixed $y$, we get that
$$u\leq Cx^{-2/(\alpha-1)}y^{1-2\sigma}\qquad\text{in $D\times(T/2,T)$},$$
where $C=C(\alpha, k, \sigma)>0$. Using that $1-2\sigma>\dfrac{q-p}{q-p+1}$,  it follows from 
the nondegeneracy property in Lemma~\ref{milouar1} that no point $(x_0,0)$ with $0<|x_0|\le \rho$ 
can be a gradient blow-up point. In view of (\ref{don0}), we conclude that $GBUS(u_0)=\{(0,0)\}$.
\hfill $\square$

	\section{Proof of the main parabolic inequality (Proposition~\ref{PJneg})}
	The proof is quite technical.
For sake of clarity, 
	some of the intermediate calculations will be summarized in Lemma~\ref{lem21} and ~\ref{lem22} below.
	
	We first compute
	\begin{eqnarray*}
J_t&=&u_{xt}+cd F'(u) u_t\\
   &=& \left(\Delta_p u\right)_x+\left(|\nabla u|^q\right)_x \underbrace{+cd F'\Delta_p u}_{(0_p)}+\hypertarget{m1}{\underbrace{cd F'|\nabla u|^q}_{ (0_q)}}.
	\end{eqnarray*}
	and
	\begin{eqnarray*}
	\left(\Delta_p u\right)_x&=&|\nabla u|^{p-2} \Delta(u_x)\\
	&+& (p-2) \Delta u |\nabla u|^{p-4} \nabla u\cdot\nabla u_x\\
	&+& (p-2) |\nabla u|^{p-4}\left\langle D^2 u_x \, \nabla u, \nabla u\right\rangle\\
	&+& (p-2)(p-4) |\nabla u|^{p-6} \nabla u\cdot\nabla u_x\left\langle D^2 u\, \nabla u, \nabla u\right\rangle\\
	&+&2(p-2) |\nabla u|^{p-4}\left\langle D^2 u\, \nabla u, \nabla u_x\right\rangle.
	\end{eqnarray*}
Using that $u_x=J- c dF(u)$, we write
$$\nabla u_x=\nabla J-cdF'\nabla u-F\begin{pmatrix} c'd\\d'c\end{pmatrix},$$
\begin{eqnarray*}
D^2 u_x&=&D^2J-cdF'D^2u-cdF''\begin{pmatrix}u_x^2&u_x u_y\\u_x u_y&u_y^2\end{pmatrix}\\
&&-F(u)\begin{pmatrix} c''d&c'd'\\c'd'&d''c\end{pmatrix}\\
&&-F'(u)\begin{pmatrix}2c'du_x&cd'u_x+c'du_y\\cd'u_x+c'du_y&2cd'u_y\end{pmatrix},\\
\end{eqnarray*}
\begin{eqnarray*}
\Delta u_x&=&Trace(D^2 u_x)= \Delta J-cdF'\Delta u-cdF''|\nabla u|^2-F[c''d+d''c] \\
&&-2F'c'dJ+2F'Fc'cd^2-2F'(u) d'c u_y,
\end{eqnarray*}
\begin{eqnarray*}
\left\langle D^2 u_x, \nabla u, \nabla u\right\rangle&=&\left\langle D^2 J, \nabla u, \nabla u\right\rangle-cdF''|\nabla u|^4-cdF'\left\langle D^2u, \nabla u, \nabla u\right\rangle-Fc''d u_x^2\\
&&-2Fc'd' u_x u_y-Fcd''u_y^2\\
&&- 2F'|\nabla u|^2(cd'u_y+dc'u_x)
\end{eqnarray*}
and
\begin{eqnarray*}
\left\langle D^2 u, \nabla u, \nabla u_x\right\rangle&=&\left\langle D^2u, \nabla u, \nabla J\right\rangle-cdF' \left\langle D^2u, \nabla u, \nabla u\right\rangle\\
&&-Fc'd \nabla u \cdot \nabla J+cc'd^2 F'F|\nabla u|^2 +d^2F^2(c')^2J\\
&&-d^2F^2(c')^2cdF+c'c d' d F^2u_y -Fd'c (u_x u_{xy}+u_y u_{yy}).
\end{eqnarray*}
 Therefore,
\begin{eqnarray*}
	\left(\Delta_p u\right)_x&=&|\nabla u|^{p-2} \left[\Delta J-cdF'\Delta u-cdF''|\nabla u|^2-2c'dF'J\right.\\
	&&\left.+2cc'd^2 F F'-2d'cF'u_y-F\left(c''d+d''c\right)\right] \\
	&&\\
	&+& (p-2) \Delta u |\nabla u|^{p-4} \left[\nabla u\cdot\nabla J-cdF'|\nabla u|^2-u_y cd'F-c'dFJ+c'cd^2F^2\right]\\
	&&\\
	&+& (p-2) |\nabla u|^{p-4}\left[\left\langle D^2 J \nabla u, \nabla u\right\rangle-cdF'\left\langle D^2 u \nabla u, \nabla u\right\rangle-cdF'' |\nabla u|^4\right.\\
	&&-\left.2c'd F' |\nabla u|^2J + 2c'cd^2 F'F |\nabla u|^2-2cd'F'u_y|\nabla u|^2\right.\\
&&-\left.c''d F (u_x)^2-2c'd'F Ju_y+2c'cd'dF^2  u_y-d''cF (u_y)^2\right]\\	
&&\\
	&+& (p-2)(p-4) |\nabla u|^{p-6}  \left\langle D^2\, u \nabla u, \nabla u\right\rangle\left[\nabla u\cdot\nabla J\right.\\
	&&-\left.cdF'|\nabla u|^2-u_y cd'F-c'dFJ+c'cd^2F^2\right]\\
	&&\\
	&+&2(p-2) |\nabla u|^{p-4}\left[\left\langle D^2 u\, \nabla u, \nabla J\right\rangle-cd F'\left\langle D^2u \, \nabla u, \nabla u\right\rangle-c'dF\nabla u\cdot\nabla J \right. \\
	&&+\left.c'cd^2F'F|\nabla u|^2+(c'd)^2F^2J-(c'd)^2F^3cd+c'd'cdF^2 u_y -d'cF u_{yy}u_y \right.\\
&&-\left.d'cF \nabla J\cdot L+d'dc^2F'Fu_yJ-d'd^2c^3F'F^2u_y+(d'cF)^2J-(d'cF)^2cdF\right],
	\end{eqnarray*}
	where $L=\begin{pmatrix}0\\u_x\end{pmatrix}$.
This can be rewritten as
	\begin{eqnarray*}
	\left(\Delta_p u\right)_x&=&|\nabla u|^{p-2}\Delta J+(p-2)|\nabla u|^{p-4}\left\langle D^2J\,\nabla u,\nabla u\right\rangle+\mathcal{H}_1\cdot\nabla J+\mathcal{A}_1(x,y,t)J	\\
&&\hypertarget{p1}{}\\
	&&\left.
\begin{array}{lll}-F|\nabla u|^{p-2}\left[c''d+d''c\right]-(p-2)F|\nabla u|^{p-4}\left[c''du_x^2+d''cu_y^2\right]\\   
	-2(p-2)cdF|\nabla u|^{p-4}\left[(c'dF)^2+(d'cF)^2\right]-(p-1)cdF''|\nabla u|^p\\
	+4(p-2)F^2c'cd'd|\nabla u|^{p-4}u_y
\end{array}\right\}\quad(1)\leq 0\\
	&&\hypertarget{p2}{}\\
	&&\left.\begin{array}{ll}+(4p-6)c'cd^2F'F|\nabla u|^{p-2}-2(p-1)cd'F'|\nabla u|^{p-2}u_y\\
	-2(p-2)d'd^2F'F^2c^3|\nabla u|^{p-4}u_y\end{array}\right\}\quad(2)\geq 0\\
&&\underbrace{- (p-1)cdF'\Delta_p u}_{(3)}\\
	&&\underbrace{-2(p-2)d'cF|\nabla u|^{p-4}u_y u_{yy}}_{(4)}\\
   	&&\underbrace{-(p-2)cd'Fu_y\left[|\nabla u|^{p-4}\Delta u+ (p-4) |\nabla u|^{p-6}  \left\langle D^2\, u \nabla u, \nabla u\right\rangle\right]}_{(5)}\\
	&&\underbrace{+(p-2)c'cd^2F^2\left[|\nabla u|^{p-4}\Delta u+ (p-4) |\nabla u|^{p-6}  \left\langle D^2\, u \nabla u, \nabla u\right\rangle\right]}_{(6)},
	\end{eqnarray*}
	\noindent where 
	\begin{eqnarray}
	\mathcal{H}_1&:=&(p-2)\left[|\nabla u|^{p-4}\Delta u+ (p-4) |\nabla u|^{p-6}  \left\langle D^2\, u \nabla u, \nabla u\right\rangle\right]\nabla u
	\nonumber \\
	&&-2(p-2)cd'F|\nabla u|^{p-4}L\nonumber \\
	&&-2(p-2)c'dF|\nabla u|^{p-4}\nabla u+2(p-2)|\nabla u|^{p-4}\left(D^2 u,\nabla u\right) \label{defH1}
	\end{eqnarray}	
	and
	\begin{eqnarray}
	\mathcal{A}_1
	&:=&-2(p-1)F'c'd|\nabla u|^{p-2} \nonumber \\
	&&-(p-2)Fc'd\left[|\nabla u|^{p-4}\Delta u+ (p-4) |\nabla u|^{p-6}  \left\langle D^2\, u \nabla u, \nabla u\right\rangle\right]\nonumber \\
	&&+2(p-2)|\nabla u|^{p-4}F^2\left[(c'd)^2+(d'c)^2\right]+2(p-2)d'dF'Fc^2|\nabla u|^{p-4}u_y\nonumber \\
	&&-2(p-2)c'd'F|\nabla u|^{p-4}u_y. \label{defA1}
	\end{eqnarray}
On the other hand, we have 
\begin{eqnarray*}
	\left(|\nabla u|^q\right)_x&=& q|\nabla u|^{q-2} \nabla u\cdot\nabla u_x\\\hypertarget{m71}{}
	&=& q|\nabla u|^{q-2} \nabla u\cdot\nabla J - qc'd F|\nabla u|^{q-2} J+(7),
	\end{eqnarray*}
	where
$$	(7):=\underbrace{-q cd F'|\nabla u|^q}_{(7_-)\leq 0}\\\hypertarget{m72}{}
	+ \underbrace{qc c' d^2 F^2 |\nabla u|^{q-2}-qcd' F |\nabla u|^{q-2} u_y}_{(7_+)\geq 0}.
	$$
Setting
 \begin{equation}
 \label{eqdefA2}
 \mathcal{A}_2:=\mathcal{A}_1- qc'd F|\nabla u|^{q-2},
\qquad
\mathcal{H}_2:=\mathcal{H}_1+q|\nabla u|^{q-2} \nabla u,
 \end{equation}
we have thus proved the following lemma.

\begin{lem}\label{lem21}
 Define the parabolic operator:
\begin{equation*}
 \mathcal{L}J:=J_t-|\nabla u|^{p-2} \Delta J-(p-2)|\nabla u|^{p-4}\left\langle D^2J\,\nabla u,\nabla u\right\rangle-\mathcal{H}_2\cdot\nabla J- \mathcal{A}_2 J.
\end{equation*}
Then
 \begin{equation}
 \label{eqLJ}
  \mathcal{L}J= (0_p)+(0_q)+(1)+(2)+(3)+(4)+(5)+(6)+(7).
 \end{equation}
\end{lem}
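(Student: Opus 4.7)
The goal is a bookkeeping identity: expand $\mathcal{L}J$ completely in terms of $u$, $c$, $d$, $F(u)$ and their derivatives, and verify that after regrouping one recovers precisely the sum $(0_p)+(0_q)+(1)+\cdots+(7)$ with the stated definitions of $\mathcal{H}_2$ and $\mathcal{A}_2$. Since the lemma is a purely computational identity (no inequalities, no sign restrictions are asserted), the proposal is to organize the expansion carefully and track every term.

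First, I would compute $J_t$ using $J=u_x+c(x)d(y)F(u)$ and the PDE $u_t=\Delta_p u+|\nabla u|^q$, obtaining
\[
J_t=(\Delta_p u)_x+(|\nabla u|^q)_x+cdF'(u)\Delta_p u+cdF'(u)|\nabla u|^q,
\]
which identifies the last two summands as $(0_p)$ and $(0_q)$. Next I would expand $(\Delta_p u)_x$ by differentiating the formula
\[
\Delta_p u=|\nabla u|^{p-2}\Delta u+(p-2)|\nabla u|^{p-4}\langle D^2u\,\nabla u,\nabla u\rangle
\]
in $x$, producing five contributions corresponding to the derivatives of $|\nabla u|^{p-2}$, $\Delta u$, $|\nabla u|^{p-4}$ and $\langle D^2u\,\nabla u,\nabla u\rangle$. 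At each occurrence of $\nabla u_x$, $\Delta u_x$ or $\langle D^2u_x\,\nabla u,\nabla u\rangle$ I would substitute
\[
\nabla u_x=\nabla J-cdF'\nabla u-F\bigl(c'd,\,cd'\bigr)^{T},
\]
and the analogous formulas for $D^2u_x$, $\Delta u_x$ and $\langle D^2u_x\,\nabla u,\nabla u\rangle$ given in the excerpt.

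The core of the argument is then a regrouping by the type of object each term contains:
\begin{itemize}
\item terms containing $D^2J$ assemble into $|\nabla u|^{p-2}\Delta J+(p-2)|\nabla u|^{p-4}\langle D^2J\,\nabla u,\nabla u\rangle$, which is precisely the principal second-order part of $\mathcal{L}J$;
\item terms containing $\nabla J$ are gathered into a single vector coefficient, which is exactly $\mathcal{H}_1$ as defined in \eqref{defH1};
\item terms linear in $J$ without derivatives of $J$ are collected into the coefficient $\mathcal{A}_1$ as defined in \eqref{defA1};
\item the remaining terms, depending only on $u$ and the profile functions, are partitioned into the labelled groups (1)--(6), whose memberships are dictated by the ``$(1)\le 0$'' and ``$(2)\ge 0$'' sign groupings used later.
\end{itemize}
Then I would treat $(|\nabla u|^q)_x$ in the same manner: writing $\nabla u_x=\nabla J-cdF'\nabla u-F(c'd,cd')^T$, the $q$-power gives $q|\nabla u|^{q-2}\nabla u\cdot\nabla J-qc'dF|\nabla u|^{q-2}J+(7)$, with remainder $(7)$ already split into its subgroups $(7_-)$ and $(7_+)$.

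At this point I would set $\mathcal{H}_2:=\mathcal{H}_1+q|\nabla u|^{q-2}\nabla u$ and $\mathcal{A}_2:=\mathcal{A}_1-qc'dF|\nabla u|^{q-2}$, so that the terms $-\mathcal{H}_2\cdot\nabla J$ and $-\mathcal{A}_2 J$ appearing in $\mathcal{L}J$ absorb exactly the $\nabla J$- and $J$-contributions of both $(\Delta_p u)_x$ and $(|\nabla u|^q)_x$. Transferring these to the left-hand side yields
\[
\mathcal{L}J=(0_p)+(0_q)+(1)+(2)+(3)+(4)+(5)+(6)+(7),
\]
which is the claimed identity. The only real obstacle is the bookkeeping: ensuring that every occurrence of $u_x$ has been eliminated via $u_x=J-cdF$ (so that the $J$-terms are correctly collected into $\mathcal{A}_1$), that the cross-terms coming from $\Delta u_x$ and from $\langle D^2u_x\,\nabla u,\nabla u\rangle$ combine with their correct prefactors $|\nabla u|^{p-2}$ or $(p-2)|\nabla u|^{p-4}$, and that no term is double-counted when splitting between $\mathcal{H}_1$, $\mathcal{A}_1$ and the residuals (1)--(6). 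No estimate, inequality, or use of the hypotheses from Section~7 is needed here; the lemma is a pure algebraic identity following from the chain rule applied to the PDE.
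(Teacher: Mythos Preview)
Your proposal is correct and follows essentially the same approach as the paper: the paper performs exactly this expansion of $J_t=(\Delta_p u)_x+(|\nabla u|^q)_x+(0_p)+(0_q)$, substitutes $\nabla u_x$, $\Delta u_x$, $D^2u_x$ in terms of $J$ via $u_x=J-cdF$, and regroups the result into the principal part, the $\mathcal{H}_1$- and $\mathcal{A}_1$-coefficients, and the residual blocks $(1)$--$(7)$, with the $q$-power contribution absorbed into $\mathcal{H}_2$, $\mathcal{A}_2$. The lemma is indeed a pure bookkeeping identity, and your outline matches the paper's computation step for step.
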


As a significant difficulty as compared with the semilinear case $p=2$, many additional terms appear in the 
contributions (1), (2), (4)--(6), and especially nonlinear, second order terms in (4)--(6). 
To proceed further, we need to observe that, among the second derivatives of $u$,  $u_{yy}$ needs a special treatment,
 since it is not immediately expressed in terms of $\nabla J$ unlike $u_{xx}$ and $u_{xy}$. Namely we shall  eliminate $u_{yy}$
by expressing it in terms of $u_t$, $\nabla u$, $u_{xx}$ and $u_{xy}$ by using the equation.
Although this will make the computation even more involved, by producing a lot of additional terms,
this seems to be the only way to control the effects of $u_{yy}$.
The  bound on $u_t$ given by Lemma~\ref{estimderivtemps} will be helpful in this process.\\

First  we have 
\begin{equation}
\label{eq3tq}
 (3)=- (p-1)cdF'\Delta_p u=\underbrace{-cdF'\Delta_p u}_{- (0_p)}\hypertarget{m2}{\underbrace{-(p-2) cdF'u_t}_{(3_t)}}\hypertarget{m3}{\underbrace{+(p-2) cdF' |\nabla u|^q.}_{(3_q)}}
\end{equation}
\newline
 To deal with $(4)$, we set
\begin{eqnarray*}
 u_{yy}&=&\dfrac{u_t-|\nabla u|^q-\nabla u_x\cdot M}{w},
\end{eqnarray*} where
 $$M:=\begin{pmatrix}|\nabla u|^ {p-2}+(p-2)|\nabla u|^{p-4}u_x^2\\
2(p-2)|\nabla u|^{p-4}u_x u_y\end{pmatrix}\qquad\text{and}\qquad w=|\nabla u|^{p-2}+(p-2)|\nabla u|^{p-4}u_y^2.$$
Since $u_x=J-cdF$, we get
\begin{eqnarray*}
 \nabla u_x\cdot M&=&-cdF'J\left[|(p-1)\nabla u|^{p-2}+(p-2)|\nabla u|^{p-4}u_y^2\right]-2(p-2)cd'Fu_y|\nabla u|^{p-4}J\\
&&+2(p-2)c^2d'dF^2u_y|\nabla u|^{p-4}+c^2d^2F'F\left[|(p-1)\nabla u|^{p-2}+(p-2)|\nabla u|^{p-4}u_y^2\right]\\
&&+\nabla J\cdot M-c'dF\left[|\nabla u|^{p-2}+(p-2)|\nabla u|^{p-4}u_x^2\right].
\end{eqnarray*}
It follows that 
\begin{eqnarray*}
 u_{yy}&=&\dfrac{u_t-|\nabla u|^q}{w}-\dfrac{\nabla J\cdot M}{w}+\dfrac{cdF'J\left[(p-1)|\nabla u|^{p-2}+(p-2)|\nabla u|^{p-4}u_y^2\right]}{w}\\
&&+\dfrac{2(p-2)cd'Fu_y|\nabla u|^{p-4}J}{w}
+\dfrac{c'dF\left[|\nabla u|^{p-2}+(p-2)|\nabla u|^{p-4}u_x^2\right]}{w} \\
&&-\dfrac{c^2d^2FF'\left[(p-1)|\nabla u|^{p-2}+(p-2)|\nabla u|^{p-4}u_y^2\right]}{w}\\
&&-\dfrac{2(p-2)c^2d'dF^2u_y|\nabla u|^{p-4}}{w}.
\end{eqnarray*}
Now, to treat  the contribution of $u_{yy}$ in $(5)$ and $(6)$, we set $N=\begin{pmatrix} u_x^2\\2u_xu_y\end{pmatrix}$ and rewrite
\begin{eqnarray*} 
|\nabla u|^{p-4}\Delta u+(p-4)|\nabla u|^{p-6}\left\langle D^2 u\,  \nabla u, \nabla u\right\rangle&=&\dfrac{\Delta_p u}{|\nabla u|^2}
-2 |\nabla u|^{p-6}\left\langle D^2 u\, \nabla u, \nabla u\right\rangle\\
&=&\dfrac{u_t-|\nabla u|^q}{|\nabla u|^2}-2|\nabla u|^{p-6}\left[ \nabla u_x\cdot N+u_y^2 u_{yy}\right].
\end{eqnarray*}
We have
\begin{eqnarray*}
 \nabla u_x \cdot N&=&\nabla J\cdot N-cdF'J\left[u_x^2+2u_y^2\right]
+ c^2d^2 F F'\left[u_x^2+2u_y^2\right]
-c'd Fu_x^2\\
&&-2cd'F Ju_y+2d'dc^2F^2u_y.
\end{eqnarray*}
The expression in $(4)$ then becomes
\begin{eqnarray*}
(4)&=&\left.\dfrac{2(p-2)d'cF|\nabla u|^{p-4}u_y  M\cdot \nabla J}{w}\right\} (4_\nabla)\\\\ 
&&\left.\begin{array}{ll}-\dfrac{2(p-2)c^2d'dF'F|\nabla u|^{p-4}u_y\left[(p-1)|\nabla u|^{p-2}+(p-2)|\nabla u|^{p-4}u_y^2\right]J}{w}\\
-\dfrac{4(p-2)^2(d'cF)^2|\nabla u|^{p-4}u_y^2|\nabla u|^{p-4}J}{w}
\end{array}\right\}(4_J)\\
&&\hypertarget{m41}{}\\
&&\left.\begin{array}{ll}+\dfrac{2d'cF(p-2)|\nabla u|^{p-4}|\nabla u|^qu_y}{w}\\
+\dfrac{2(p-2)c^3d'd^2F'F^2|\nabla u|^{p-4}u_y\left[(p-1)|\nabla u|^{p-2}+(p-2)|\nabla u|^{p-4}u_y^2\right]}{w}\end{array}\right\}(4_-)\leq 0\\
&&\hypertarget{m42}{}\\
&&\left.\begin{array}{ll}+\dfrac{4(p-2)^2(d'cF)^2cdF|\nabla u|^{p-4}u_y^2|\nabla u|^{p-4}}{w}\\
-\dfrac{2(p-2)d'dc'cF^2u_y|\nabla u|^{p-4}\left[|\nabla u|^{p-2}+(p-2)|\nabla u|^{p-4}u_x^2\right]}{w}\end{array}\right\}(4_+)\geq 0\\
&&\left.-\dfrac{2d'cF(p-2)|\nabla u|^{p-4}u_tu_y}{w}\right\}(4_t)\hypertarget{m43}{}.
\end{eqnarray*}
The other two terms can be rewritten as
\begin{eqnarray*}
(5)&=&\left.2cd'F(p-2)u_y|\nabla u|^{p-6}  N\cdot \nabla J-\dfrac{2(p-2)cd'Fu_y^3|\nabla u|^{p-6} M\cdot \nabla J}{w}\right\}(5_\nabla)\\\\
&&\left.\begin{array}{ll}-2(p-2)c^2d'dF'Fu_y|\nabla u|^{p-6}\left[u_x^2+2u_y^2\right]J-4(p-2)(cd'F)^2u_y^2|\nabla u|^{p-6}J\\
\noalign{\vskip 1mm}
+\dfrac{2(p-2)c^2d'dF'Fu_y^2|\nabla u|^{p-6}u_y\left[(p-1)|\nabla u|^{p-2}+(p-2)|\nabla u|^{p-4}u_y^2\right]J}{w}\\
+\dfrac{4(p-2)^2(d'cF)^2u_y^4|\nabla u|^{ 2p-10}J}{w}\\ 
\end{array}\right\}(5_J)\\
\noalign{\vskip 1mm}
&&\hypertarget{m51}{}\\
&&\left.
\begin{array}{lll}+(p-2)cd'Fu_y|\nabla u|^{q-2}+2(p-2)c^3d'd^2F'F^2u_y|\nabla u|^{p-6}\left[u_x^2+2u_y^2\right]\\
\noalign{\vskip 1mm}
+\dfrac{2(p-2)c'cd'dF^2u_y|\nabla u|^{p-6}u_y^2\left[|\nabla u|^{p-2}+(p-2)|\nabla u|^{p-4}u_x^2\right]}{w}\\
\noalign{\vskip 1mm}
-\dfrac{4(p-2)^2(cd'F)^2cdFu_y^4|\nabla u|^{2p-10}}{w}\end{array}\right\}(5_-)\leq 0 \\
\noalign{\vskip 1mm}
&&\hypertarget{m52}{}\\
&&\left.\begin{array}{lll}-2(p-2)c'cd'dF^2u_y|\nabla u|^{p-6}u_x^2+4(p-2)(cd'F)^2cdFu_y^2|\nabla u|^{p-6}\\
\noalign{\vskip 1mm}
-\dfrac{2(p-2)cd'Fu_y|\nabla u|^{p-4}|u_y^2|\nabla u|^{q-2}}{w}\\
\noalign{\vskip 1mm}
-\dfrac{2(p-2)c^3d'd^2F'F^2u_y|\nabla u|^{p-6}u_y^2\left[(p-1)|\nabla u|^{p-2}+(p-2)|\nabla u|^{p-4}u_y^2\right]}{w}\end{array}\right\}(5_+)\geq 0\\
&&\hypertarget{m53}{}\\
&&\left.-\dfrac{(p-2)cd'Fu_yu_t}{|\nabla u|^2}+\dfrac{2(p-2)cd'Fu_y^2|\nabla u|^{p-6}u_t u_y}{w}\right\}(5_t)
\end{eqnarray*}
 and (noticing that (6) can be obtained from (5) by formally multiplying with $\frac{-c'd^2F}{d'u_y}$)
\begin{eqnarray*}
(6)&=&\left.-2(p-2)c'cd^2F^2
|\nabla u|^{p-6} { N\cdot \nabla J}+ \dfrac{2(p-2)c'cd^2F^2u_y^2|\nabla u|^{p-6}  M\cdot \nabla J}{w}
\right\}(6_\nabla)\\\\
&&\left.\begin{array}{ll}+2(p-2)c'c^2d^3F'F^2|\nabla u|^{p-6}\left[u_x^2+2u_y^2\right]J+4(p-2)c'c^2d'd^2F^3u_y|\nabla u|^{p-6}J\\
-\dfrac{2(p-2)c'c^2d^3F'F^2u_y^2|\nabla u|^{p-6}\left[(p-1)|\nabla u|^{p-2}+(p-2)|\nabla u|^{p-4}u_y^2\right]J}{w}\\
-\dfrac{4(p-2)^2c'c^2d'd^2F^3|\nabla u|^{p-6}u_y^3|\nabla u|^{p-4}J}{w}
\end{array}\right\} (6_J)\\
&&\hypertarget{m61}{}\\
&&\left.\begin{array}{lll}-(p-2)c'cd^2F^2|\nabla u|^{q-2}-2(p-2)c'c^3d^4F'F^3|\nabla u|^{p-6}\left[u_x^2+2u_y^2\right]\\
-\dfrac{2(p-2)(c')^2cd^3F^3|\nabla u|^{p-6}u_y^2\left[|\nabla u|^{p-2}+(p-2)|\nabla u|^{p-4}u_x^2\right]}{w}\\
+\dfrac{4(p-2)^2c'c^3d'd^3F^4u_y^3|\nabla u|^{2p-10}}{w}\end{array}\right\}(6_-)\leq 0\\
&&\hypertarget{m62}{}\\
&&\left.\begin{array}{lll}+2(p-2)(c')^2cd^3F^3|\nabla u|^{p-6}u_x^2-4(p-2)c'c^3d'd^3F^4u_y|\nabla u|^{p-6}\\
+\dfrac{2(p-2)c'cd^2F^2|\nabla u|^{p-4}u_y^2|\nabla u|^{q-2}}{w}\\
+\dfrac{2(p-2)c'c^3d^4F'F^3|\nabla u|^{p-6}u_y^2\left[(p-1)|\nabla u|^{p-2}+(p-2)|\nabla u|^{p-4}u_y^2\right]}{w}\end{array}\right\}(6_+)\geq 0\\
&&\hypertarget{m63}{}\\
&&\left.+\dfrac{(p-2)c'cd^2F^2u_t}{|\nabla u|^2}-\dfrac{2(p-2)c'cd^2F^2u_y^2|\nabla u|^{p-6}u_t}{w}.\right\}(6_t)
\end{eqnarray*}
 We shall now collect and relabel the numerous positive and negative terms 
that we just obtained, when expanding (1)--(7) in the process of eliminating $u_{yy}$.
A number of positive and negative terms will then be paired together according to certain cancellations.
Then, the remaining positive terms, as well as the terms involving $u_t$, will be 
eventually controlled by using the negative terms.

Using that $d'\leq 0$ and $ F', F'', u_y\geq 0$, 
we  first have positive terms:
\begin{eqnarray*}
&&(a):=-2(p-2)c^3d'd^2F'F^2|\nabla u|^{p-4}u_y\\
&&(b):=-\dfrac{2(p-2)cd'Fu_y^2|\nabla u|^{p-6}|\nabla u|^q u_y}{w}\\
&&(c):=-\dfrac{2(p-2)c^3d'd^2F'F^2u_y|\nabla u|^{p-6}u_y^2\left[(p-1)|\nabla u|^{p-2}+(p-2)|\nabla u|^{p-4}u_y^2\right]}{w}\\
&&(d):=+2(p-2)(c')^2cd^3F^3|\nabla u|^{p-6}u_x^2\\
&&(e):=+\dfrac{2(p-2)c'c^3d^4F'F^3|\nabla u|^{p-6}u_y^2\left[(p-1)|\nabla u|^{p-2}+(p-2)|\nabla u|^{p-4}u_y^2\right]}{w}\\
&&(f):=\left.-\dfrac{2(p-2)d'dc'cF^2u_y|\nabla u|^{p-4}\left[|\nabla u|^{p-2}+(p-2)|\nabla u|^{p-4}u_x^2\right]}{w}\right\}(f_1)\\
&&\qquad\left.-2(p-2)c'cd'dF^2u_y|\nabla u|^{p-6}u_x^2\right\}(f_2)
\end{eqnarray*}
\begin{eqnarray*}
&&(g):=-4(p-2)c'c^3d'd^3F^4u_y|\nabla u|^{p-6}\\
&&(h):=\underbrace{+\dfrac{4(p-2)^2(d'cF)^2cdF|\nabla u|^{p-4}u_y^2|\nabla u|^{p-4}}{w}}_{(h_1)}+\underbrace{4(p-2)(cd'F)^2cdFu_y^2|\nabla u|^{p-6}}_{(h_2)}\\
&&(i):=-qd'cF|\nabla u|^{q-2}u_y\\
&&(j):=\underbrace{+qc'cd^2F^2|\nabla u|^{q-2}}_{(j_1)}\underbrace{+\dfrac{2(p-2)c'cd^2F^2|\nabla u|^{p-4}u_y^2|\nabla u|^{q-2}}{w}}_{(j_2)}\\
&&(l):=-2(p-1)cd'F'|\nabla u|^{p-2}u_y\\
&&(m):=+(4p-6)cc'd^2F'F|\nabla u|^{p-2}.
\end{eqnarray*}
 They give rise to the following decompositions:
 \begin{equation}
 \label{eqDecompPlus}
\left\{\begin{array}{lll}
\hyperlink{p2}{(2)}&=&(m)+(l)+(a)\\ \noalign{\vskip 2mm}
\hyperlink{m42}{(4_+)}&=&(f_1)+(h_1) \\ \noalign{\vskip 2mm}
\hyperlink{m52}{(5_+)}&=&(b)+(c) +(h_2)+(f_2)\\ \noalign{\vskip 2mm}
\hyperlink{m62}{(6_+)}&=& (d)+(e)+(g)+(j_2)\\ \noalign{\vskip 2mm}
\hyperlink{m72}{(7_+)}&=&(i)+(j_1).\\
\end{array}\right.
 \end{equation}
 We next have terms with a negative sign:
\begin{eqnarray*}
&&(\tilde{a}):=+2(p-2)c^3d'd^2F'F^2u_y|\nabla u|^{p-6}\left[u_x^2+2u_y^2\right]\\
&&(\tilde{b}):=+\dfrac{2d'cF(p-2)|\nabla u|^{p-4}|\nabla u|^qu_y}{w}\\
&&(\tilde{c}):=+\dfrac{2(p-2)c^3d'd^2F'F^2|\nabla u|^{p-4}u_y\left[(p-1)|\nabla u|^{p-2}+(p-2)|\nabla u|^{p-4}u_y^2\right]}{w}\\
&&(\tilde{d}):=-2(p-2)cdF|\nabla u|^{p-4}(c'dF)^2\\
&&(\tilde{e}):=-2(p-2)c'c^3d^4F'F^3|\nabla u|^{p-6}\left[u_x^2+2u_y^2\right]\\
&&(\tilde{f}):=\left.\dfrac{2(p-2)c'cd'dF^2u_y|\nabla u|^{p-6}u_y^2\left[|\nabla u|^{p-2}+(p-2)|\nabla u|^{p-4}u_x^2\right]}{w}\right\}(\tilde{f}_1)\\
&&\qquad\left. +4(p-2)c'cd'dF^2|\nabla u|^{p-4}u_y\right\}(\tilde{f}_2)
\end{eqnarray*}
\begin{eqnarray*}
&&(\tilde{g}):=+(p-2)d'cF|\nabla u|^{q-2}u_y\\
&&(\tilde{h}):=\underbrace{-(p-1)cdF''|\nabla u|^p}_{(\tilde{h}_1)}\underbrace{+(p-1-q)|\nabla u|^qcdF'}_{(\tilde{h}_2)}\underbrace{-(p-1)cd''F|\nabla u|^{p-4}u_y^2}_{(\tilde{h}_3)}\\
&&\\
&&
\begin{array}{llll}
(\tilde{i}):=\underbrace{-cd''F|\nabla u|^{p-4}u_x^2}_{(\tilde{i}_1)}
 \underbrace{-(p-2)c'cd^2F^2|\nabla u|^{q-2}}_{(\tilde{i}_1)}
\underbrace{+\dfrac{4(p-2)^2c'c^3d'd^3F^4u_y^3|\nabla u|^{2p-10}}{w}}_{(\tilde{i}_3)}\\
(\tilde{j}):=\underbrace{-2(p-2)(cd'F)^2cdF|\nabla u|^{p-4}}_{(\tilde{j}_1)}
\underbrace{-\dfrac{4(p-2)^2(cd'F)^2cdFu_y^4|\nabla u|^{2p-10}}{w}}_{(\tilde{j}_2)}\\
(\tilde{l}):=-\dfrac{2(p-2)(c')^2cd^3F^3|\nabla u|^{p-6}u_y^2\left[|\nabla u|^{p-2}+(p-2)|\nabla u|^{p-4}u_x^2\right]}{w}.
\end{array}
\end{eqnarray*}
With these terms, we have the following decompositions (using $c''=0$):
 \begin{equation}
 \label{eqDecompMinus}
 \left\{\begin{array}{lll}
\hyperlink{p1}{(1)}=(\tilde{d})+(\tilde{f}_2)+(\tilde{h}_1)+(\tilde{h}_3)+(\tilde{i}_1)+(\tilde{j}_1)\\ \noalign{\vskip 2mm}
\hyperlink{m41}{(4_-)}=(\tilde{b})+(\tilde{c})\\ \noalign{\vskip 2mm}
\hyperlink{m51}{(5_-)}=(\tilde{a})+(\tilde{g})+(\tilde{j}_2) +(\tilde{f}_1)\\ \noalign{\vskip 2mm}
\hyperlink{m61}{(6_-)}= (\tilde{e})+(\tilde{i}_2)+(\tilde{i}_3)+(\tilde{l})\\ \noalign{\vskip 2mm}
 \hyperlink{m71}{(7_-)}=(\tilde{h}_2)-\hyperlink{m3}{(0_q)}- \hyperlink{m1}{(3_q)}.\\
\end{array}\right. 
\end{equation}
It follows from (\ref{eqLJ}) in Lemma~\ref{lem21} and (\ref{eq3tq})--(\ref{eqDecompMinus}) that
$$
\aligned 
\mathcal{L}J= 
& \phantom{ +}\bigl[(0_q)+(0_p)\bigr]  \\
& +\bigl[(\tilde{d})+(\tilde{f}_2)+(\tilde{h}_1)+(\tilde{h}_3)+(\tilde{i}_1)+(\tilde{j}_1)\bigr] \\
& + \bigl[(m)+(l)+(a)\bigr] \\
& +\bigl[- (0_p)+(3_t)+(3_q)\bigr] \\
& + \bigl[(\tilde{b})+(\tilde{c})\bigr] +\bigl[(f_1)+(h_1) \bigr] + \bigl[(4_\nabla) + (4_J) + (4_t)\bigr] \\
& +\bigl[(\tilde{a})+(\tilde{g})+(\tilde{j}_2)+(\tilde{f}_1)\bigr] + \bigl[(b) +(c) +(h_2)+(f_2)\bigr] + \bigl[(5_\nabla) + (5_J) + (5_t)\bigr]  \\
& +\bigl[(\tilde{e})+(\tilde{i}_2)+(\tilde{i}_3)+(\tilde{l})\bigr] + \bigl[(d)+(e)+(g)+(j_2)\bigr]  + \bigl[(6_\nabla) + (6_J) + (6_t) \bigr] \\
&+\bigl[ (\tilde{h}_2)-\hyperlink{m3}{(0_q)}- \hyperlink{m1}{(3_q)}\bigr] + \bigl[    (i)+(j_1)\bigr]. \\
\endaligned
$$
Reordering the terms, we obtain
\begin{equation}
\label{eqLJ2}
\aligned 
\mathcal{L}J
=& \phantom{+}(a)+(b)+(c)+(d)+(e)+(f)+(g)+(h)+(i)+(j)+(l)+(m) \\
&+(\tilde a)+(\tilde b)+(\tilde c)+(\tilde d)+(\tilde e)+(\tilde f)+(\tilde g)+(\tilde h)+(\tilde i)+(\tilde j)+(\tilde l) \\
&+\bigl[(3_t) +(4_t) + (5_t) + (6_t)\bigr] \\
&+\bigl[(4_\nabla) +(5_\nabla) +(6_\nabla) \bigr]
+\bigl[ (4_J) + (5_J) +(6_J) \bigr].
\endaligned
\end{equation}
Collecting the terms with $J$ (reps., $\nabla J$) in (\ref{eqLJ2}), together with those in $\mathcal{A}_2$
(resp., $\mathcal{H}_2$) and using (\ref{eqdefA2}), we define
\begin{eqnarray}
 \mathcal{H}&:=&\mathcal{H}_1+q|\nabla u|^{q-2} \nabla u+\dfrac{2(p-2)d'cF|\nabla u|^{p-4}u_y M}{w} \nonumber\\
&&\phantom{\mathcal{H}_2}+2cd'F(p-2)u_y|\nabla u|^{p-6} N
-\dfrac{2(p-2)cd'Fu_y^3|\nabla u|^{p-6} M}{w} \label{defH}\\
&&\phantom{\mathcal{H}_2} -2(p-2)c'cd^2F^2|\nabla u|^{p-6} N + \dfrac{2(p-2)c'cd^2F^2u_y^2|\nabla u|^{p-6} M}{w} \nonumber
\end{eqnarray}
and
\begin{eqnarray}
\mathcal{A}
&:= &\mathcal{A}_1  - qc'd F|\nabla u|^{q-2}\nonumber\\
\noalign{\vskip 1mm}
&&\phantom{\mathcal{A}_1}-\dfrac{2(p-2)c^2d'dF'F|\nabla u|^{p-4}u_y\left[(p-1)|\nabla u|^{p-2}+(p-2)|\nabla u|^{p-4}u_y^2\right]}{w}\nonumber\\
&&\phantom{\mathcal{A}_1}  -\dfrac{4(p-2)^2(d'cF)^2|\nabla u|^{p-4}u_y^2|\nabla u|^{p-4}}{w}\nonumber\\
&&\phantom{\mathcal{A}_1}  -2(p-2)c^2d'dF'Fu_y|\nabla u|^{p-6}\left[u_x^2+2u_y^2\right]-4(p-2)(cd'F)^2u_y^2|\nabla u|^{p-6}\nonumber\\
&&\phantom{\mathcal{A}_1}  +\dfrac{2(p-2)c^2d'dF'Fu_y^2|\nabla u|^{p-6}u_y\left[(p-1)|\nabla u|^{p-2}+(p-2)|\nabla u|^{p-4}u_y^2\right]}{w}
\nonumber\\
&&\phantom{\mathcal{A}_1}  +\dfrac{4(p-2)^2(d'cF)^2u_y^4|\nabla u|^{2p-10}}{w} \label{defA}\\
&&\phantom{\mathcal{A}_1}  +2(p-2)c'c^2d^3F'F^2|\nabla u|^{p-6}\left[u_x^2+2u_y^2\right]+4(p-2)c'c^2d'd^2F^3u_y|\nabla u|^{p-6}
\nonumber\\
&&\phantom{\mathcal{A}_1}  -\dfrac{2(p-2)c'c^2d^3F'F^2u_y^2|\nabla u|^{p-6}
\left[(p-1)|\nabla u|^{p-2}+(p-2)|\nabla u|^{p-4}u_y^2\right]}{w}\nonumber\\
&&\phantom{\mathcal{A}_2}  -\dfrac{4(p-2)^2c'c^2d'd^2F^3|\nabla u|^{p-6}u_y^3|\nabla u|^{p-4}}{w},\nonumber
\end{eqnarray}
where we recall that
\begin{eqnarray}
	\mathcal{H}_1&:=&(p-2)\left[|\nabla u|^{p-4}\Delta u+ (p-4) |\nabla u|^{p-6}  \left\langle D^2\, u \nabla u, \nabla u\right\rangle\right]\nabla u
	\nonumber \\
	&&-2(p-2)cd'F|\nabla u|^{p-4}L\nonumber \\
	&&-2(p-2)c'dF|\nabla u|^{p-4}\nabla u+2(p-2)|\nabla u|^{p-4}\left(D^2 u,\nabla u\right), \label{defH1b}
	\end{eqnarray}	
	with $L=\begin{pmatrix}0\\u_x\end{pmatrix}$, and
	\begin{eqnarray}
	\mathcal{A}_1
	&:=&-2(p-1)F'c'd|\nabla u|^{p-2} \nonumber \\
	&&-(p-2)Fc'd\left[|\nabla u|^{p-4}\Delta u+ (p-4) |\nabla u|^{p-6}  \left\langle D^2\, u \nabla u, \nabla u\right\rangle\right]\nonumber \\
	&&+2(p-2)|\nabla u|^{p-4}F^2\left[(c'd)^2+(d'c)^2\right]+2(p-2)d'dF'Fc^2|\nabla u|^{p-4}u_y\nonumber \\
	&&-2(p-2)c'd'F|\nabla u|^{p-4}u_y. \label{defA1b}
	\end{eqnarray}
Finally observing that
$$(a)+(\tilde{a})\leq0, \quad
(b)+(\tilde{b})\leq0, \quad
(c)+(\tilde{c})\leq0, \quad
(d)+(\tilde{d})\leq 0, \quad
(e)+(\tilde{e})\leq 0$$
and using $(\tilde f), (\tilde i), (\tilde j), (\tilde l) \le 0$,  we obtain the following lemma.

\begin{lem}\label{lem22}
Recalling the definition (\ref{DefOperP}) of the parabolic operator $\mathcal {P}$:
$$\mathcal {P} J:=J_t- |\nabla u|^{p-2} \Delta J- (p-2)|\nabla u|^{p-4}\left\langle D^2J\,\nabla u,\nabla u\right\rangle
-\mathcal{H }\cdot\nabla J-\mathcal{A}(x,y,t)J,$$
we have
\begin{eqnarray*}
\mathcal{P}J&\le &(f)+(g)+(h)+(i)+(j)+(l)+(m)+(\tilde g)+(\tilde h) \\
&&+(3_t) +(4_t) + (5_t) + (6_t).\\
\end{eqnarray*}
\end{lem}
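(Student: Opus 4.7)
The plan is to start from the identity of Lemma~\ref{lem21} and reduce the right-hand side of
\[
\mathcal L J=(0_p)+(0_q)+(1)+(2)+(3)+(4)+(5)+(6)+(7)
\]
to the form claimed in the lemma. First, I would use the expansion \eqref{eq3tq} for~$(3)$ together with the decomposition $(7)=(7_-)+(7_+)$ in which $(7_-)=(\tilde h_2)-(0_q)-(3_q)$: the three pairs $\pm(0_p),\pm(0_q),\pm(3_q)$ cancel, leaving the time contribution $(3_t)$ and the surviving piece $(\tilde h_2)+(7_+)$. Next, applying the $u_{yy}$-substitution derived from the PDE, together with $u_x=J-cdF$, inside each of $(4),(5),(6)$ produces the five sub-contributions $(k_\nabla)+(k_J)+(k_+)+(k_-)+(k_t)$ exactly as displayed in the text for $k\in\{4,5,6\}$.

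The decisive step is the reorganisation of the identity into an expression for~$\mathcal P J$. Direct comparison against \eqref{defH} and \eqref{defA} shows that the combined coefficient of $\nabla J$ in $(4_\nabla)+(5_\nabla)+(6_\nabla)$ coincides with $\mathcal H-\mathcal H_2$, and likewise the combined coefficient of $J$ in $(4_J)+(5_J)+(6_J)$ coincides with $\mathcal A-\mathcal A_2$. Since $\mathcal L$ and $\mathcal P$ differ precisely through these drift and zero-order coefficients, transferring $(4_\nabla)+(5_\nabla)+(6_\nabla)$ and $(4_J)+(5_J)+(6_J)$ across the identity for $\mathcal L J$ converts the left-hand side into $\mathcal P J$, yielding
\[
\mathcal P J=(1)+(2)+(3_t)+(4_+)+(4_-)+(4_t)+(5_+)+(5_-)+(5_t)+(6_+)+(6_-)+(6_t)+(\tilde h_2)+(7_+).
\]

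Inserting the algebraic decompositions \eqref{eqDecompPlus}--\eqref{eqDecompMinus} regroups the right-hand side as
\[
\mathcal P J=\bigl[(a)+(b)+\cdots+(m)\bigr]+\bigl[(\tilde a)+(\tilde b)+\cdots+(\tilde l)\bigr]+(3_t)+(4_t)+(5_t)+(6_t),
\]
where $(\tilde h_2)$ is absorbed into $(\tilde h)=(\tilde h_1)+(\tilde h_2)+(\tilde h_3)$. The stated inequality then follows by discarding nonpositive contributions. The five pairs $(a)+(\tilde a),\ldots,(e)+(\tilde e)$ are each $\le 0$ by termwise comparison of the explicit formulas: in every pair, corresponding monomials cancel up to a nonnegative factor built from $w$, $|\nabla u|$ and powers of $u_y$. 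Moreover, $(\tilde f),(\tilde i),(\tilde j),(\tilde l)$ are individually $\le 0$ thanks to the sign conventions $d'\le 0$, $d''\ge 0$, $c''=0$, $F,F',F''\ge 0$ and $u_y\ge 0$. Dropping these nine nonpositive quantities leaves exactly $(f)+(g)+(h)+(i)+(j)+(l)+(m)+(\tilde g)+(\tilde h)$ together with the four time terms.

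The main obstacle is purely combinatorial bookkeeping: verifying that the assembled $\nabla J$ and $J$ coefficients in $(4_\nabla)+(5_\nabla)+(6_\nabla)$ and $(4_J)+(5_J)+(6_J)$ exactly match $\mathcal H-\mathcal H_2$ and $\mathcal A-\mathcal A_2$ monomial by monomial, and checking the five pairwise sign inequalities $(a)+(\tilde a)\le 0,\ldots,(e)+(\tilde e)\le 0$. These are elementary algebraic manipulations, but require careful tracking of the vectors $M,N,L$, of the weights $w,\,|\nabla u|^{p-4},\,|\nabla u|^{p-6}$, and of the signs of $c',d',d'',F',F''$.
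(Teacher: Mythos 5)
Your proposal is correct and follows essentially the same route as the paper: Lemma~\ref{lem21} plus the expansion \eqref{eq3tq}, the $u_{yy}$-elimination, the transfer of $(4_\nabla)+(5_\nabla)+(6_\nabla)$ and $(4_J)+(5_J)+(6_J)$ into $\mathcal H$ and $\mathcal A$ (which is how \eqref{defH}--\eqref{defA} are defined), the regrouping via \eqref{eqDecompPlus}--\eqref{eqDecompMinus}, and finally discarding the five pairs $(a)+(\tilde a),\dots,(e)+(\tilde e)$ and the individually nonpositive $(\tilde f),(\tilde i),(\tilde j),(\tilde l)$. No gaps.
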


\begin{proof}[Completion of proof of Proposition~\ref{PJneg}]
 Starting from Lemma~\ref{lem22}, we shall estimate the remaining positive and $u_t$ terms by the 
key negative terms $(\tilde g)$ and $(\tilde h)$, 
after appropriate choice of the parameters.
An essential tool in this step will the Bernstein-type estimates (see \cite[Theorem~1.2]{AmalJDE})
\begin{equation}
\label{BernsteinEst}
|\nabla u|\leq C_0 y^{-1/(q-p+1)}\quad\text{and}\quad u \leq  C_0 y^{(q-p)/(q-p+1)}\quad \text{in}\  [0,x_1]\times (0,L_2]\times (0, T),
\end{equation}
where $C_0=C_0(p,q,\Omega,\mu,\|\nabla u_0\|_\infty)>0$,
and we will use also the lower bound from (\ref{don1}):
\begin{equation}
\label{lowerNabla}
|\nabla u|\geq u_y\geq \delta_0=\mu/2>0.
\end{equation}

First, using $w\ge |\nabla u|^{p-2}$, we get
$$(f)\leq -2(p-2)pc'cd'dF^2u_y|\nabla u|^{p-4}.$$
Assume $y_1\le 1$. Due to (\ref{BernsteinEst}),
we have 
\begin{equation}\label{permis}
 dF\leq C_0^{\alpha}y^{-\gamma+\alpha(q-p)/(q-p+1)} \leq   C_0^{\alpha}.
\end{equation}
Here we used that $\gamma\leq \alpha-1$ and hence $\alpha(q-p)/(q-p+1)-\gamma\geq 1-\alpha/(q-p+1)\geq 0$.
Assume $k_0=k_0(p,q,\Omega,\mu,\|\nabla u_0\|_\infty)>0$ sufficiently small so that
\begin{eqnarray}
0<k_0\leq \dfrac{|\nabla u|^{q-p+2}}{4 p dF}\qquad\text{and}\qquad
0<k_0^3\leq \dfrac{|\nabla u|^{q-p+4}}{8x^2F^3d^3},
\end{eqnarray}
which is possible due to (\ref{lowerNabla}),  
$x\leq L_1$ and \eqref{permis}).
We then have
\begin{eqnarray*}
&& (f)+(g)+(\tilde{g})\\
&\leq&\underbrace{\dfrac{p-2}{2}cd'Fu_y}_{\leq 0}\left[\underbrace{|\nabla u|^{q-2}-8k^3x^2F^3d^3 |\nabla u|^{p-6}+|\nabla u|^{q-2}-4 p k|\nabla u|^{p-4}dF}_{\geq 0}\right]\le 0.
\end{eqnarray*}

Next, we have 
\begin{eqnarray*}
(h)&\leq&4(p-2)(p-1)(cd'F)^2cdFu_y^2|\nabla u|^{p-6}\\
(j)&\leq&(q+2(p-2))c'cd^2F^2|\nabla u|^{q-2}
\end{eqnarray*}
and, owing to Lemma~\ref{estimderivtemps}, 
\begin{eqnarray*}
(3_t)&\leq& (p-2)cdF'K \\
(4_t)+(5_t)&\leq&5(p-2)c|d'|F\dfrac{|u_t|u_y}{|\nabla u|^2}\leq 5(p-2)c|d'|F\dfrac{Ku_y}{|\nabla u|^2}\\
(6_t)&\leq& \dfrac{3(p-2) c'cd^2F^2 K}{|\nabla u|^2}.
\end{eqnarray*}
Here and in the rest of the proof, $K$ denotes a constant depending on $\|u_0\|_{C^2}$, $p$ and $q$.
Consequently
\begin{eqnarray*}
\mathcal{P} J&\leq&(h)+(i)+(j)+(l)+(m)+(3_t) +(4_t) + (5_t) + (6_t)+(\tilde{h})\\
&\leq&4(p-2)(p-1)(cd'F)^2cdFu_y^2|\nabla u|^{p-6}+(q+2(p-2))c'cd^2F^2|\nabla u|^{q-2}\\
&&+5(p-2)c|d'|F\dfrac{Ku_y}{|\nabla u|^2}+2(p-1)c|d'|F'|\nabla u|^{p-2}u_y\\ 
&&+(4p-6)cc'd^2F'F|\nabla u|^{p-2}+q|d'|cF|\nabla u|^{q-2}u_y\\
&&+\dfrac{3(p-2) c'cd^2F^2 K}{|\nabla u|^2}+(p-2)cdF'K\\ 
&&-(p-1)cdF''|\nabla u|^p+(p-1-q)|\nabla u|^qcdF'-(p-1)cd''F|\nabla u|^{p-4}u_y^2
\end{eqnarray*} 
hence
\begin{eqnarray}\label{eq1a}
\dfrac{\mathcal{P}J}{cdF}&\leq&
-(q-p+1)\alpha\dfrac{|\nabla u|^q}{u}-(p-1)\alpha(\alpha-1)\dfrac{|\nabla u|^p}{u^2}-(p-1)\dfrac{\gamma (\gamma +1)|\nabla u|^{p-4}u_y^2}{y^2}
\nonumber\\
&+&\dfrac{2\gamma\alpha(p-1)}{y}\dfrac{|\nabla u|^{p-2}u_y}{u}+4(p-2)(p-1)k^2\gamma^2 x^2u^{2\alpha}y^{-2\gamma}\dfrac{|\nabla u|^{p-6}u_y^2}{y^2}\nonumber\\
&+&5(p-2)\gamma  K|\nabla u|^{-2}\dfrac{u_y}{y} 
+(4p-6)k\alpha\dfrac{u^{\alpha-1}|\nabla u|^{p-2}}{y^{\gamma}}+
(q+2(p-2))k\dfrac{u^{\alpha}|\nabla u|^{q-2}}{y^{\gamma}}\nonumber\\
&+&q\gamma\dfrac{|\nabla u|^{q-2}u_y}{y}+(p-2)\dfrac{\alpha K}{u}+\dfrac{3(p-2)k y^{-\gamma}u^{\alpha} 
K} {|\nabla u|^2}.
\end{eqnarray}

Using Young's inequality, we obtain that
$$\dfrac{2\gamma\alpha}{y}\dfrac{|\nabla u|^{p-2}u_y}{u}\leq \alpha(\alpha-1)\dfrac{|\nabla u|^{p}}{u^2}+\dfrac{\alpha\gamma^2}{\alpha -1}\dfrac{|\nabla u|^{p-4}u_y^2}{y^2},$$
hence
\begin{eqnarray}\label{eq1}
&&\dfrac{2\gamma\alpha(p-1)}{y}\dfrac{|\nabla u|^{p-2}u_y}{u}
-(p-1)\alpha(\alpha-1)\dfrac{|\nabla u|^p}{u^2}-(p-1)\dfrac{\gamma (\gamma +1)|\nabla u|^{p-4}u_y^2}{y^2}\nonumber\\
&&\leq\left(\dfrac{\alpha\gamma^2}{\alpha-1}-\gamma(\gamma+1)   \right) \dfrac{(p-1)|\nabla u|^{p-4}u_y^2}{y^2}
=-\dfrac{2\gamma\sigma(p-1)|\nabla u|^{p-4}u_y^2}{y^2}.
\end{eqnarray}
 By (\ref{BernsteinEst}), we have also
\begin{equation}\label{eq7}
u|\nabla u|^{q-p}\leq C_0^{q-p+1}.
\end{equation}
Using again Young's inequality,  and (\ref{eq7}), we have
\begin{align}
&q\gamma\dfrac{|\nabla u|^{q-2}u_y}{y}\leq\dfrac{\sigma\gamma |\nabla u|^{p-4}u_y^2}{2y^2}+\dfrac{q^2\gamma}{2\sigma}|\nabla u|^{2q-p}
 \leq\dfrac{\sigma\gamma |\nabla u|^{p-4}u_y^2}{2y^2}+\dfrac{q^2\gamma C_0^{q-p+1}}{2\sigma}\dfrac{|\nabla u|^q}{u},\label{eq2}\\
&5(p-2)\gamma K|\nabla u|^{-2}\dfrac{u_y}{y}\leq \dfrac{\sigma\gamma|\nabla u|^{p-4}u_y^2}{2y^2}+\left[\dfrac{25(p-2)^2\gamma  K^2|\nabla u|^{-p-q} u}{2\sigma}\right]\dfrac{|\nabla u|^q}{u}.\label{eq3}
\end{align}
Next, using (\ref{BernsteinEst}) and  (\ref{lowerNabla}),
it follows that 
\begin{align}
k\alpha(4p-6)\dfrac{u^{\alpha-1}|\nabla u|^{p-2}}{y^{\gamma}}&\leq k\alpha(4p-6)C_0^{\alpha -1} y^{(\alpha-1)(2\sigma-\frac{1}{q-p+1})+2}\dfrac{|\nabla u|^2}{u_y^2}\dfrac{|\nabla u|^{p-4}u_y^2}{y^2}\nonumber\\
&\leq k\alpha(4p-6)\delta_0^{-2} C_0^{\alpha +1} y^{(\alpha-1)(2\sigma-\frac{1}{q-p+1})+\frac{2(q-p)}{q-p+1}}\dfrac{|\nabla u|^{p-4}u_y^2}{y^2},\label{eq4}
\end{align}
\begin{eqnarray}\label{eq5}
&(q+2(p-2))k\dfrac{u^{\alpha}|\nabla u|^{q-2}}{y^{\gamma}}\leq (q+2(p-2))k C_0^{\alpha +q-p}y^{(\alpha-1)(2\sigma-\frac{1}{q-p+1})+2}\dfrac{|\nabla u|^2}{u_y^2}\dfrac{|\nabla u|^{p-4}u_y^2}{y^2}\nonumber\\ 
&\leq(q+2(p-2))k \dfrac{C_0^{\alpha+q-p+2}}{ \delta_0^2}y^{(\alpha-1)(2 \sigma-\frac{1}{q-p+1})+\frac{2(q-p)}{q-p+1}}\dfrac{|\nabla u|^{p-4}u_y^2}{y^2}
\end{eqnarray}
and
\begin{eqnarray}\label{eq6}
k^2\gamma^2\dfrac{ x^2u^{2\alpha}}{y^{2\gamma}}\dfrac{|\nabla u|^{p-6}u_y^2}{y^2}\leq k^2\gamma^2\dfrac{ x^2 C_0^{2\alpha}y^{-2\gamma+2\alpha(q-p)/(q-p+1)}}{\delta_0^{2}}\dfrac{|\nabla u|^{p-4}u_y^2}{y^2}.
\end{eqnarray}
Finally, using that $u\geq \mu y$, we have
\begin{equation}\label{eq8}
 \dfrac{\alpha K}{u}=\dfrac{\alpha K y^2}{u |\nabla u|^{p-4} u_y^2}\dfrac{|\nabla u|^{p-4} u_y^2}{y^2}\leq \dfrac{\alpha Ky}{\mu \delta_0^{p-2}}\dfrac{|\nabla u|^{p-4} u_y^2}{y^2}.
\end{equation}
Using the  bounds $u\le \|u_0\|_\infty$ and (\ref{lowerNabla}) we have
\begin{equation}\label{eq9}
 \dfrac{k y^{-\gamma}u^{\alpha} K}{ |\nabla u|^2}\leq \dfrac{k \|u_0\|_{\infty}^{\alpha}Ky^{2-\gamma}}{ \delta_0^p}\dfrac{|\nabla u|^{p-4} u_y^2}{y^2}.
\end{equation}
Combining  \eqref{eq1a}-\eqref{eq9},  we get that 
\begin{equation}\label{finalcalc}
\begin{array}{lllll}
\dfrac{\mathcal{P}J}{cdF}&\leq& 
\phantom{+}\dfrac{|\nabla u|^q}{u}\left(\displaystyle\frac{q^2\gamma C_0^{q-p+1}}{2\sigma}+25(p-2)^2\gamma \dfrac {K^2
\|u_0\|_{\infty}}{ 2\sigma\delta_0^{q+p}}-\alpha(q-p+1)\right)\\
&&+\dfrac{|\nabla u|^{p-4}u_y^2}{y^2} 
\biggl\{k\left((q+2(p-2)) C_0^{\alpha+q-p+2} \delta_0^{-2}\right) y^{(\alpha-1)(2\sigma-\frac{1}{q-p+1})+\frac{2(q-p)}{q-p+1}}\\
&&+k\left(\alpha(4p-6)\delta_0^{-2} C_0^{\alpha +1}\right) y^{(\alpha-1)(2\sigma-\frac{1}{q-p+1})+\frac{2(q-p)}{q-p+1}}\\
\noalign{\vskip 2mm}
&&+4(p-2)(p-1)k^2\gamma^2 x^2 C_0^{2\alpha} y^{-2\gamma+\frac{2\alpha(q-p)}{q-p+1}}\delta_0^{-2} \\
\noalign{\vskip 2mm}
&&+ (p-2)\dfrac{\alpha Ky}{\mu \delta_0^{p-2}} 
+3(p-2)\dfrac{k \|u_0\|_{\infty}^{\alpha}Ky^{2-\gamma}}{ \delta_0^{p-2}} 
-(2p-3)\sigma \gamma  \biggr\}. \\
\end{array}
\end{equation}
Now, we may choose $\alpha =\alpha(p,q,\Omega,\mu,\|u_0\|_{C^2},\sigma)>1$ close enough to $1$ in such a way that  $\gamma=(\alpha-1)(1-2\sigma)$ is  small enough to satisfy
\begin{equation}
\gamma\left[\frac{q^2}{2\sigma}C_0^{q-p+1}+25(p-2)^2 \dfrac{ K^2\|u_0\|_{\infty}}{2\sigma\delta_0^{q-p}}\right]\leq
(q-p+1){ \alpha}
\end{equation}
and 
\begin{eqnarray}
&&(\alpha -1)\left(2\sigma -\dfrac{1}{q-p+1}\right)+\dfrac{2(q-p)}{q-p+1}\geq 0,\\
&&\alpha \dfrac{q-p}{q-p+1}-(\alpha-1)(1-2\sigma)\geq 0.
\end{eqnarray}
Finally, once $\alpha$ is fixed (hence $\gamma$ is also fixed small), recalling that $y\leq y_1\leq1$,  $x\leq  L_1$ and $\gamma\leq 2$,  
we take $k_0=k_0(p,q,\Omega,\mu,\|u_0\|_{C^2},\sigma)>0$ possibly smaller, in such a way that
\begin{equation}
\begin{array}{ll}
k_0 \left((q+2(p-2)) C_0^{\alpha+q-p+2} \delta_0^{-2} +\alpha(4p-6)\delta_0^{-2} C_0^{\alpha +1}\right)\\
+k_0^2 \big(4(p-2)(p-1)\gamma^2 L_1^2 C_0^{2\alpha} \delta_0^{-2}\big) 
+k_0 \dfrac{3(p-2) \|u_0\|_{\infty}^{\alpha}K}{ \delta_0^{p-2}} 
\leq\dfrac{2p-3}{2}\sigma \gamma,
\end{array}
\end{equation}
and next we take $y_1=y_1(p,q,\Omega,\mu,\|u_0\|_{C^2},\sigma)>0$ small enough such that
\begin{equation}
 \dfrac{ (p-2)\alpha Ky_1}{\mu \delta_0^{p-2}}\leq\dfrac{2p-3}{2}\sigma \gamma.
\end{equation}
Then it follows from \eqref{finalcalc} that
\begin{equation}
\mathcal{P}J\leq 0\qquad\text{in } D\times (T/2, T).
\end{equation}

Finally, we need to check (\ref{propAH}).
The continuity statement is clear from the definition of  $\mathcal{A}, \mathcal{H}$.
Let us show that $\mathcal{A}$ is bounded in $D\times (T/2, \tau)$ for each $\tau<T$. 
For this purpose, let us observe that due to $|\nabla u|\leq C(\tau), u\leq C(\tau)y$  and $\alpha-1\geq \gamma$, 
we have for $y\leq 1$ and $\tau\in(T/2,T)$
\begin{eqnarray}
|F'd|=\alpha u^{\alpha-1}y^{-\gamma}\leq C^{\alpha-1}(\tau)y^{\alpha-1-\gamma}\leq \alpha C^{\alpha-1}(\tau)\\
|Fd'|=\gamma u^{\alpha}y^{-\gamma-1}\leq \gamma C^{\alpha}(\tau)\\
|Fd|=u^{\alpha}y^{-\gamma}\leq C^{\alpha}(\tau).
\end{eqnarray}
We also have by (\ref{BernsteinEst}) and  (\ref{lowerNabla}):
\begin{equation}
|\nabla u|^r\leq\left\{
\begin{array}{ll}
C^r(\tau), &\qquad\text{if } r>0,\\
\delta_0^r,&\qquad\text{if } r<0.
\end{array}
\right.
\end{equation}
 The assertion then follows easily from (\ref{defA}), (\ref{defA1b}) and (\ref{uC21}). \end{proof}

	\section{APPENDIX 1. Proof of regularity results (Theorem \ref{theorsou2})}
 
\begin{proof}[Proof of Theorem \ref{theorsou2}(i)]
We assume $\delta_0:=\inf_{Q_T}|\nabla u|>0$.
Fix $0<\tau<T$ and let $M_\tau=\|\nabla u\|_{L^\infty(Q_\tau)}<\infty$.
We pick smooth functions $b=b_\tau$ and $F=F_\tau$ with the following properties:
$$\hbox{$b(s)=s^{(p-2)/2}$ \ and \ $F(s)=s^{q/2}$ \ \ for $\delta_0^2\le s\le M_\tau^2$,}$$
$$\inf_{[0,\infty)}b>0,\ \quad b'\ge 0,\ \quad b'(s)=0\ \hbox{ for $s$ large enough,}
\ \quad F\ge 0,\ \quad \sup_{[0,\infty)}F<\infty.
$$
By the results in \cite[Chapter V]{lady} (see Remark~\ref{remsou2} below for details),
there exists a (unique) classical solution $v=v_\tau\in C^{2+\alpha,1+\alpha/2}(\overline \Omega\times (0,\tau))\cap C(\overline Q_\tau)$,
for some $\alpha\in (0,1)$, of the problem
\begin{eqnarray*}
v_t-\nabla\cdot(b(|\nabla v|^2)\nabla v)&=&F(|\nabla v|^2) \quad\hbox{in $Q_\tau$}\\
v&=&g\quad\hbox{on $S_\tau$}\\
v(\cdot,0)&=&u_0\quad\hbox{in $\Omega$}.
\end{eqnarray*}
Since $v$ is also a weak solution of \eqref{1a}--\eqref{1c} in $Q_\tau$,
by uniqueness of weak solutions (cf.~Theorem \ref{theorsou1}(ii)),
it follows that $u=v_\tau$ on $Q_\tau$, hence (\ref{uC21}).
\end{proof}

\begin{rem}\label{remsou2} 
More precisely, in the special case when $u_0\in C^{2+\alpha}(\overline \Omega)$ and $u_0$ satisfies
the second order compatibility conditions, the existence of $v$ claimed in the above proof
follows from \cite[Theorem~V.6.1]{lady}.
In the general case $u_0 \in W^{1,\infty}(\Omega)$, with $u_0=g$ on $\partial\Omega$, 
this follows by a standard approximation procedure of $u_0$ by such smooth $u_{0,n}$.
Namely, if $v_n$ denotes the solution originating from $u_{0,n}$, then, by \cite[Theorems~V.4.1, V.1.1 and V.5.4]{lady} respectively,
we get uniform a priori estimates for the sequence $v_n$
in the spaces $L^\infty(0,\tau;W^{1,\infty}(\Omega))$, $C^\alpha(\overline Q_\tau)$ and 
$C^{2+\alpha,1+\alpha/2}_{loc}(\overline\Omega\times (0,\tau])$ for some $\alpha\in (0,1)$.
We may then pass to the limit along a subsequence and obtain a solution with the announced properties.
\end{rem}

In the proof of Theorem \ref{theorsou2}(ii)(iii), we shall use the following local regularity lemma.
We note that only statement (ii) will be used here.
The global version of statement (i) was already proved in Theorem \ref{theorsou2}(i).
However, we give and prove its local version for completeness, since
it was mentioned without proof in \cite[p.~2487]{AmalJDE}.

\begin{lem}\label{lemRegulLoc}
Under the assumptions of Theorem \ref{theorsou1}, let $u$ be the (maximal) weak solution of (\ref{eqprincipale})
and let $P_0=(x_0,y_0,t_0)\in Q_T$.
Assume $|\nabla u(P_0)|>0$. Then:
\smallskip

(i) for some $\alpha\in (0,1)$, $u$ is a classical $C^{2+\alpha,1+\alpha/2}$-solution on a space-time neighborhood of~$P_0$;

\smallskip
(ii) for some $\beta\in (0,1)$, $\nabla u$ is $C^{2+\beta,1+\beta/2}$ on a space-time neighborhood of~$P_0$.

\end{lem}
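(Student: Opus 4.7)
The plan is to exploit the local H\"older continuity of $\nabla u$ from Theorem~\ref{theorsou1}(iii), together with the pointwise positivity $|\nabla u(P_0)|>0$, to reduce both statements to classical interior regularity theory for uniformly parabolic equations with H\"older coefficients. By continuity of $\nabla u$, there exist $\delta_0>0$ and a cylindrical neighborhood
\[
U=B_r(x_0,y_0)\times(t_0-\tau,t_0+\tau)\Subset Q_T
\]
on which $|\nabla u|\ge \delta_0$. On $U$, the map $\xi\mapsto|\xi|^{p-2}\xi$ is smooth, so the equation is uniformly parabolic and quasilinear with smooth structure.

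For part (i), rewrite the PDE on $U$ in non-divergence form
\[
u_t-a_{ij}(\nabla u)\,u_{ij}=|\nabla u|^q,\qquad a_{ij}(\xi)=|\xi|^{p-2}\Bigl(\delta_{ij}+(p-2)\frac{\xi_i\xi_j}{|\xi|^2}\Bigr).
\]
Since $\nabla u$ is H\"older continuous on $\overline U$, the coefficients $a_{ij}(\nabla u(x,y,t))$ are H\"older continuous and uniformly elliptic, and the right-hand side $|\nabla u|^q$ is H\"older continuous. I would then apply interior Schauder theory for linear parabolic equations (e.g.~\cite[Thm.~IV.10.1]{lady}) to obtain $u\in C^{2+\alpha,1+\alpha/2}$ on a slightly smaller cylinder around $P_0$. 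Alternatively, following verbatim the truncation argument from the proof of Theorem~\ref{theorsou2}(i), one may solve on $U$ a modified uniformly parabolic problem with $b,F$ truncated outside the range $[\delta_0^2,\|\nabla u\|_{L^\infty(U)}^2]$, apply \cite[Thm.~V.6.1]{lady} (after approximation of the boundary data on $\partial U$ by traces of the known $C^{1,\alpha}$ function $u$), and identify the resulting classical solution with $u$ via the local weak comparison principle (Remark~\ref{remsou1}).

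For part (ii), once $u\in C^{2+\alpha,1+\alpha/2}$ classically near $P_0$ with $|\nabla u|\ge\delta_0$, differentiation of the PDE with respect to $x_k$ is legitimate in the interior, and $v=u_{x_k}$ satisfies the linear, uniformly parabolic equation
\[
v_t-a_{ij}(\nabla u)\,v_{ij}-B_\ell(\nabla u,D^2u)\,v_\ell=0
\]
on a neighborhood of $P_0$, where $B_\ell$ is a smooth function of its arguments arising from $\partial_{x_k}[a_{ij}(\nabla u)u_{ij}]+\partial_{x_k}|\nabla u|^q$. Since $\nabla u\in C^{1+\alpha,(1+\alpha)/2}$ and $D^2u\in C^{\alpha,\alpha/2}$, all coefficients are at least $C^{\alpha,\alpha/2}$, so interior Schauder estimates yield $v\in C^{2+\alpha,1+\alpha/2}$, hence $\nabla u\in C^{2+\beta,1+\beta/2}$ locally with $\beta=\alpha$.

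The only subtle point is the justification of classical differentiation of the PDE; this becomes routine the moment part (i) is established, since then $a_{ij}(\nabla u)$ is $C^{1+\alpha}$ and the equation $u_t=a_{ij}u_{ij}+f$ holds pointwise. The remainder of the argument is a standard application of the Ladyzhenskaya--Solonnikov--Ural'tseva / Schauder pipeline, which is precisely why the result was stated without proof in \cite{AmalJDE} and is recorded here only for the reader's convenience.
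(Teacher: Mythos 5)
Your localization step and the overall strategy (restrict to a cylinder where $\delta_0\le|\nabla u|\le M$, so the operator is uniformly parabolic with H\"older data, then invoke linear theory) coincide with the paper's, and your second route for (i) -- truncating $b,F$ and identifying $u$ with a classical solution of the truncated problem -- is essentially the paper's proof of Theorem~\ref{theorsou2}(i) transplanted to a local cylinder (note that Remark~\ref{remsou1} as stated concerns the original problem on $\Omega$, so you would need the analogous comparison/uniqueness statement for the truncated, uniformly parabolic problem on $U$ with its own parabolic boundary data; this is standard but should be said). The genuine gap lies elsewhere: both your first route for (i) and your argument for (ii) apply interior Schauder estimates to an object that is not yet known to be a strong (let alone classical) solution. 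In (i), $u$ is a priori only a weak solution of the divergence-form equation with $\nabla u$ H\"older continuous; one cannot ``apply interior Schauder theory'' to the nondivergence form until one knows that $D^2u$ exists, say in $L^2_{loc}$. This is exactly what the paper's first difference-quotient argument supplies: testing the equation satisfied by $D_hu$ with $\varphi^2D_hu$ gives a uniform bound on $\nabla D_hu$ in $L^2$, hence $u\in W^{2,1}_{2,loc}$ is a strong solution of $u_t-a_{ij}u_{ij}=f$, and only then does Schauder regularity yield the claimed $C^{2+\alpha,1+\alpha/2}$ smoothness.

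In (ii) the same issue reappears one order higher, and your dismissal of it is circular: the facts that $a_{ij}(\nabla u)\in C^{1+\alpha}$ and that $u_t=a_{ij}u_{ij}+f$ holds pointwise do \emph{not} allow you to differentiate the equation, because you do not yet know that $u_{ij}$ and $u_t$ are differentiable in $x$ -- that is precisely the regularity being proved, so ``$v=u_{x_k}$ satisfies the differentiated equation'' is asserted rather than established. The paper fills this gap by a second difference-quotient step: $D_hu$ solves a linear nondivergence equation with right-hand side $F_h=D_hf+(D_ha_{ij})\,\tau_hu_{ij}$, which is bounded in $L^r$ uniformly in $h$ thanks to part (i); interior parabolic $L^r$ estimates then give $D^3u,\,Du_t\in L^r_{loc}$, so that $z=\partial_\ell u_x$ is a strong solution of the differentiated equation with H\"older coefficients and right-hand side, and only at that point do Schauder estimates produce $\nabla u\in C^{2+\beta,1+\beta/2}$ locally. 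Your proof needs this intermediate step (or an equivalent approximation/mollification argument); without it, the key assertion of part (ii) is unsupported.
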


\begin{proof}
(i) Since, by Theorem~\ref{theorsou1}(iii), $\nabla u$ is continuous in $Q_T$, 
there exist $\lambda, \rho, M_2>0$ such that 
\begin{equation}\label{BoundLambda}
\lambda\le |\nabla u|\le M_2\quad\hbox{ in $Q^\rho:=B_\rho(x_0,y_0)\times [t_0-\rho,t_0+\rho]\subset Q_T$.}
\end{equation}
For any unit vector $\vec e$ and $0<h<\rho/2$, let us introduce the differential quotients 
$$D_hu=h^{-1}(\tau_hu-u),\quad\hbox{ where } \tau_hu=u\bigl((x,y)+h\vec e,t\bigr).$$
We have
$$|\nabla \tau_hu|^q-|\nabla u|^q=d^h(x,y,t)\cdot\nabla (\tau_hu-u)
 \ \ \hbox{ in $Q^{\rho/2}$},$$
where $|d^h(x,y,t)|\le C$ independent of $h$.
Next denote $b(s)=s^{(p-2)/2}$ and $a_i(p)=b(|p|^2)p_i$ where $p=(p_1,p_2)$, so that $\Delta_pu=\partial_i(a_i(\nabla u))$.
Following \cite[p.445]{lady}, we write
$$a_i(\nabla \tau_hu)-a_i(\nabla u)=\int_0^1\frac{d}{ds}\,a_i(s\nabla \tau_hu+(1-s)\nabla u))\, ds=
\tilde a_{ij}^h\partial_j(\tau_hu-u),$$
where
$$\tilde a_{ij}^h(x,y,t)=\int_0^1 \frac{\partial a_i}{\partial p_j}(s\nabla \tau_hu+(1-s)\nabla u))\, ds.$$
Subtracting the PDE in (\ref{eqprincipale}) for $u$ and for $\tau_hu$ and dividing by $h$, 
we see that $D_hu$ is a local weak solution of 
\begin{equation}\label{EqnTranslation}
\partial_t(D_hu)-\partial_i\bigl[\tilde a_{ij}^h\partial_j(D_hu)\bigr]=d^h(x,y,t)\cdot\nabla (D_hu)
 \ \ \hbox{ in $Q^{\rho/2}$}.
\end{equation}
Moreover, since $\frac{\partial a_i}{\partial p_j}\xi_i\xi_j=b(|p|^2)|\xi|^2+2b'(|p|^2)p_ip_j\xi_i\xi_j
\ge b(|p|^2)|\xi|^2\ge \lambda^{p-2}|\xi|^2$ in $Q^{\rho/2}$ by (\ref{BoundLambda}), we have
$$\tilde a_{ij}^h\xi_i\xi_j\ge \lambda^{p-2}|\xi|^2
 \ \ \hbox{ in $Q^{\rho/2}$}.$$
We then  test (\ref{EqnTranslation}) with $\varphi^2D_hu$, where $\varphi\in C^\infty_0(Q^{\rho/2})$ is a cut-off function such that $\varphi=1$ on $Q^{\rho/3}$.
By integration by parts and some simple manipulations, it is easy to see that
$$ \lambda^{p-2} \int_{Q^{\rho/3}} |\nabla D_hu|^2\, dxdydt \le
\int_{Q^{\rho/2}}
\tilde a_{ij}^h\partial_i(D_hu)\partial_j(D_hu)\varphi^2\, dxdydt\le C.$$
It follows that $D^2u\in L^2(Q^{\rho/3})$. Consequently, we obtain that $u\in W^{2,1}_2(Q^{\rho/3})$ and is a local strong solution of
equation (\ref{eqprincipale}) written in nondivergence form, i.e.:
\begin{equation}\label{nondiveq}
u_t-a_{ij}u_{ij}=f \ \hbox{ in $Q^{\rho/3}$},
\quad\hbox{where }
a_{ij}=|\nabla u|^{p-2}\Bigl[\delta_{ij}+(p-2)\dfrac{u_iu_j}{|\nabla u|^2}\Bigr],\quad f=|\nabla u|^q.
\end{equation}
Since, by Theorem~\ref{theorsou1}(iii), $a_{ij}, f$ are H\"older continuous in $\overline Q^{\rho/3}$,
it follows from interior Schauder parabolic regularity \cite[Theorem~III.12.2]{lady} that, for some $\alpha\in(0,1)$,
\begin{equation}\label{HolderReg}
u\in C^{2+\alpha,1+\alpha/2}(\overline Q^{\rho/4}).
\end{equation}

(ii) Thanks to (\ref{HolderReg}), we know that $u$ is a classical solution of (\ref{nondiveq}) in $Q^{\rho/4}$.
Keeping the above notation, for $0<h<\rho/8$, we then have
$$(D_hu)_t-a_{ij}(D_hu)_{ij}=F_h:=D_hf+(D_ha_{ij})(\tau_hu_{ij}) \quad\hbox{ in $Q^{\rho/8}$}.$$
Moreover, as a consequence of (\ref{HolderReg}), we have, for $1<r<\infty$, 
$$\|F_h\|_{L^r(Q^{\rho/8})}\le C\|\nabla f\|_{L^r(Q^{\rho/4})}+
\|\nabla A\|_{L^r(Q^{\rho/4})}\|D^2u\|_{L^\infty(Q^{\rho/4})}\le C, \quad 0<h<\rho/8.$$
It thus follows from interior parabolic $L^r$ estimates (see \cite[Theorem~III.12.2]{lady}) that, for $0<h<\rho/8$,
$$\|D^2 D_hu\|_{L^r(Q^{\rho/16})}+\|\partial_tD_h\|_{L^r(Q^{\rho/16})}\le C(\rho)
\bigl(\|F_h\|_{L^r(Q^{\rho/8})}+\|D_hu\|_{L^r(Q^{\rho/8})}\bigr)\le C.
$$
We deduce that $Du_t, D^3u\in L^r_{loc}(Q_T)$. Then differentiating (\ref{nondiveq}) in space, 
we see that the function $z=\partial_\ell u_x$ ($\ell=1,2$) is a local strong solution of
\begin{equation}\label{nondiveq2}
z_t-a_{ij}z_{ij}=\tilde f \quad\hbox{in $Q^{\rho/16}$,}
\end{equation}
where $\tilde f=\partial_\ell f-u_{ij}\partial_\ell a_{ij}$.
Since $a_{ij}, \tilde f$ are H\"older continuous in $\overline Q^{\rho/16}$ due to (\ref{HolderReg}),
it follows from interior Schauder parabolic regularity \cite[Theorem~III.12.2]{lady} that $z\in C^{2+\alpha,1+\alpha/2}(\overline Q^{\rho/20})$
for some $\alpha\in(0,1)$.
\end{proof}

\medskip

\begin{proof}[Proof of Theorem \ref{theorsou2} (continued)]
(ii) This is a direct consequence of Lemma~\ref{lemRegulLoc}.

\smallskip
(iii) It follows from (i)(ii) that $v=u_x\in C^{2,1}(Q_T)\cap C(\overline\Omega\times (0,T))$ is a classical solution of (\ref{nondiveq2}) in $Q_T$,
where $a_{ij}$ are defined in (\ref{nondiveq}).
Moreover, $v=g_x=0$ on $S_T$.
Taking $\theta(t)$ a cut-off in time and setting $w=\theta v$, we see that $w$ solves
\begin{equation}\label{nondiveq3}
w_t-a_{ij}w_{ij}=\bar f:=\theta \tilde f+\theta_t v \quad\hbox{in $Q_T$,}
\end{equation}
with $0$ initial-boundary conditions.
By \cite[Theorem~4.28]{lieb}, since $\bar f$ is locally H\"older continuous in $\overline\Omega\times [0,T)$ 
due to (i), there exists 
 a solution to this problem in $C^{2+\beta,1+\beta/2}(\overline\Omega\times [0,T))$ for some $\beta\in (0,1)$.
Since we have uniqueness in the class $C^{2,1}(Q_T)\cap C(\overline\Omega\times [0,T))$
by the maximum principle,
the conclusion (\ref{uxC21}) follows.
\end{proof}

\section{APPENDIX 2. A parabolic version of Serrin's corner lemma}

In \cite[p.~512]{single}, a Serrin corner property in a rectangle was shown for a parabolic equation involving the Laplacian.
This was proved by comparison with a suitable product of functions of $x,t$ and $y,t$.
This result and method are no longer sufficient here and we shall establish a result for general nondivergence operators 
by modifying the original proof of \cite{serrin} for the elliptic case.

\begin{prop}\label{CornerLemma}
Let $0< X_1<\hat  X_1, \ 0< Y_1<\hat  Y_1$,   $\hat D=(0,\hat  X_1)\times (0,\hat  Y_1)\subset \R^2$, $0<\tau_1<\tau_2$, $\hat D_{\tau}=\hat D\times (\tau_1,\tau_2)$.
Let the coefficients $a_{ij}=a_{ij}(x,y,t)$ satisfy 
\begin{equation}\label{mike1} 
a_{ij}\xi_i\xi_j\ge \lambda|\xi|^2\ \ \hbox{\, in $\overline{\hat D_{\tau}}$} \end{equation}
for some $\lambda>0$ and assume that
\begin{equation}\label{mikef}
a_{ij}, B_i \in C(\overline{\hat D_{\tau}}),
\quad a_{12}+a_{21}\ge - C(x\wedge y),  \quad B_1\ge -C x, \quad B_2\ge -C y \quad\text{in}\, \,\overline{\hat D_{\tau}}.
\end{equation}
Let $z\in C^{2,1}(\hat D_\tau)\cap C(\overline{\hat D_{\tau}})$ satisfy
\begin{equation}\label{LzNegative}
\mathcal{L}z:=z_t-a_{ij}z_{ij}-B_iz_i\leq 0 \ \ \text{in $ \hat D_{\tau}$},\quad z(x,y,t)\le 0\ \text{in $\overline{\hat D_{\tau}}$}, \quad  z(0,0,t)=0.
\end{equation}
Then, for each $t_0\in (\tau_1,\tau_2)$, there exists $c_0>0$ such that 
\begin{equation}\label{conclCorner}
z\leq -c_0 xy \qquad\text{in $(0, X_1)\times (0, Y_1)\times [t_0,\tau_2)$}.
\end{equation}

\end{prop}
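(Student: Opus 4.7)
The proof adapts Serrin's corner lemma from the elliptic to the parabolic setting. A first reduction: we may assume $z\not\equiv 0$, so that by the strong parabolic maximum principle $z<0$ throughout $\hat D\times(\tau_1,\tau_2]$. Applying the parabolic Hopf boundary point lemma at interior points of the sides $\{0\}\times(0,\hat Y_1)$ and $(0,\hat X_1)\times\{0\}$ that are bounded away from the corner yields, uniformly in $t\in[t_0,\tau_2]$, linear lower bounds of the form $|z(x,y,t)|\ge c_1 x$ on the strip $\{y\ge Y_1/2\}$ and $|z(x,y,t)|\ge c_1 y$ on $\{x\ge X_1/2\}$. These immediately give $z\le -c_1' xy$ outside a small corner sub-rectangle, so it suffices to establish $z\le -c_0 xy$ in a subrectangle $(0,x_1)\times(0,y_1)$ with $x_1<X_1/2$, $y_1<Y_1/2$.

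The heart of the argument is the construction, on $R=(0,x_1)\times(0,y_1)\times(t_1,\tau_2)$ with $\tau_1<t_1<t_0$, of a stationary barrier
\[
v(x,y) := -\epsilon\bigl(e^{\mu x}-1\bigr)\bigl(e^{\mu y}-1\bigr),
\]
with $\mu>0$ large and $\epsilon>0$ small. A direct computation using the ellipticity \eqref{mike1} and the hypothesis $a_{12}+a_{21}\ge -C(x\wedge y)$ yields $\mathcal{L} v\ge 0$ in $R$: the strongly positive diagonal terms $-a_{ii}v_{ii}$ are of order $\mu^3\lambda(x+y)$ and dominate the cross term $-(a_{12}+a_{21})v_{xy}$, whose negative part is at most of order $\mu^2 C(x+y)/2$ thanks to the hypothesis, as well as the drift terms, controlled by the bounds on $B_i$. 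Fixing $\mu$ large enough (depending on $\lambda$, $C$, $\|a_{ij}\|_\infty$, $\|B_i\|_\infty$) and $x_1, y_1$ small enough that $e^{\mu\max(x_1,y_1)}$ remains bounded gives $\mathcal{L} v\ge 0$ throughout $R$. Moreover $v(x,y)\le -\epsilon\mu^2 xy$ by the elementary inequality $e^s-1\ge s$, which will ultimately yield the desired constant $c_0$.

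It remains to check the parabolic boundary comparison $v\ge z$ on $\partial_P R$, after which the comparison principle applied to $\phi:=z-v$ (satisfying $\mathcal{L}\phi\le 0$) yields $z\le v\le -\epsilon\mu^2 xy$ in $R$. On $\{x=0\}\cup\{y=0\}$, $v=0\ge z$ trivially. On $\{x=x_1\}$ and $\{y=y_1\}$, the linear lower bounds from the first paragraph give $|z|\ge c_2 y$ and $|z|\ge c_2 x$ respectively, dominating $|v|=O(\epsilon y)$ and $O(\epsilon x)$ for $\epsilon$ small. The main technical obstacle is the initial slice $\{t=t_1\}$: here both $v$ and $z(\cdot,\cdot,t_1)$ vanish at the corner, and verifying $v\ge z$ pointwise on that slice requires combining the strict interior negativity of $z$ (from the strong maximum principle, which is uniform on compacts bounded away from the corner) with Hopf-type linear estimates at each edge of the sub-rectangle at time $t_1$, possibly supplemented by a further shrinkage of $x_1, y_1$ or a weak time-dependence in the barrier to absorb the initial mismatch near the corner. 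Once $z\le v$ in $R$ is established, combining with the estimates on the complementary strips from the first paragraph completes the proof on the full $(0,X_1)\times(0,Y_1)\times[t_0,\tau_2)$.
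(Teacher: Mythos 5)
Your reduction to a small corner box and your interior computation are fine: with the barrier $v=-\epsilon(e^{\mu x}-1)(e^{\mu y}-1)$ the diagonal terms give $-a_{11}v_{xx}-a_{22}v_{yy}\ge \lambda\epsilon\mu^3(x+y)$, the hypothesis $a_{12}+a_{21}\ge -C(x\wedge y)$ keeps the cross term of size $O(\epsilon\mu^2(x+y))$, and the drift terms are lower order, so $\mathcal{L}v\ge 0$ for $\mu$ large with $\mu\max(x_1,y_1)$ bounded. (A minor point: the estimates $z\le -c_2y$ on $\{x=x_1\}$ and $z\le -c_2x$ on $\{y=y_1\}$ do not follow from your first paragraph, whose bounds are stated on $\{y\ge Y_1/2\}$ and $\{x\ge X_1/2\}$; they need a separate Hopf argument at the boundary points $(x_1,0)$ and $(0,y_1)$, though that argument is of the same kind.)

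The genuine gap is the initial slice $\{t=t_1\}$, and it cannot be patched in the ways you suggest. What you need there is $z(x,y,t_1)\le v(x,y)\approx -\epsilon\mu^2xy$ near the corner, i.e. precisely the bilinear corner estimate the proposition is supposed to produce, only at an earlier time. The hypotheses give nothing stronger than $z\le 0$ near $(0,0)$ at time $t_1$: for instance $z(\cdot,\cdot,t_1)$ could vanish like $-x^2y^2$ at the corner, in which case $z>v$ near $(0,0)$ on the initial slice no matter how small $\epsilon$ and the box are — shrinking $x_1,y_1$ does not help because the corner belongs to every box. A stationary barrier therefore cannot close the argument; the comparison function must itself vanish on the portion of the parabolic boundary near the corner and near $t=t_1$ and then "gain" the $xy$ behaviour at later times, and making such a time-dependent supersolution (your "weak time-dependence") is exactly the nontrivial content of the lemma: a naive factor $\eta(t)$ with $\eta(t_1)=0$ destroys the supersolution property since $v_t$ then enters with the wrong sign and $\eta'/\eta$ blows up at $t_1$. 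This is what the paper's proof is built around: following Serrin, it compares $w=z+\varepsilon h$ with $h=x\bigl(e^{-\alpha(x^2+(y-a)^2+(t-t_1)^2)}-e^{-\alpha a^2}\bigr)$ on the region $K$ cut from a space-time ball, so that $h=0$ on the spherical part $\Gamma_1$ of the parabolic boundary (where only $z\le0$ is known, including everything near the corner and the initial time), while on the remaining part $\Gamma_2$, which stays away from the corner, the Hopf estimate $z\le -cx$ absorbs $\varepsilon h$; evaluating at the final time $t_1$ then yields $z\le -c\,xy$. Until you produce a comparison function with these vanishing properties and verify $\mathcal{L}$-supersolution for it, your proof is incomplete at its decisive step.
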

\begin{proof}
Let $a=\min(X_1, Y_1,\frac{t_0-\tau_1}{2})$ and $\tau_3=\frac{\tau_1+t_0}{2}$, so that $\tau_1<\tau_3<t_0<\tau_2$.
 Fix $t_1\in [t_0, \tau_2)$ and let 
$$K_1:=\left\{(x,y,t);\, x^2+(a-y)^2+(t_1-t)^2<a^2,\ x>0, \ t\le t_1\right\}.$$
Observe that $K_1\subset  \hat D\times [\tau_3,t_1]$  
and set $K_2=B\bigl((0,0),a/2\bigr)\times [\tau_3,t_1]$ and $K=K_1\cap K_2$.

Now set 
$$\bar{v}(x,y,t):=e^{-\alpha(x^2+(y-a)^2+(t-t_1)^2)}-e^{-\alpha a^2},\quad
v(x,y,t)=e^{-\alpha(x^2+(y-a)^2+(t-t_1)^2)},$$
with $\alpha>0$ to be chosen later on, and define the auxiliary function $h=x \bar{v}$.
It is clear that $h>0$ in $K$. We compute 
$$h_t=-2\alpha x(t-t_1)v,\qquad
\nabla h=\begin{pmatrix}
\bar v-2\alpha x^2v\\ 
-2\alpha x(y-a)v
\end{pmatrix},
$$
$$D^2 h=v\begin{pmatrix}
-6\alpha x+4\alpha^2x^3&-2\alpha(y-a)+4\alpha^2x^2(y-a)\\
-2\alpha(y-a)+4\alpha^2x^2(y-a)&-2\alpha x+4\alpha^2x(y-a)^2
\end{pmatrix},
$$
\begin{equation*}
B(x,y,t)\cdot\nabla h
=-2\alpha xv[x B_1+(y-a)B_2]+ B_1\bar v.
\end{equation*}
Using that $a_{ij} \xi_i\xi_j\geq \lambda |\xi|^2$, we have 
\begin{align*}
a_{ij}h_{ij} 
&= v a_{11}(-6\alpha x+4\alpha^2x^3) +v a_{22}\bigl(-2\alpha x+4\alpha^2x(y-a)^2\bigr)\\
&\qquad+ v\, (a_{12}+a_{21})\bigl(-2\alpha(y-a)+4x^2\alpha^2(y-a)\bigr)\\
&=\alpha xv\biggl[4\alpha\bigl( a_{11} x^2+(y-a) x(a_{12}+a_{21})+(y-a)^2 a_{22}\bigr)\\
&\qquad -6a_{11}-2 a_{22}- \frac{2(y-a)(a_{12}+a_{21})}{x}\biggr]\\
&\ge \alpha xv\left[4\alpha  \lambda(x^2+(y-a)^2)
-6a_{11}-2 a_{22}+ \frac{2(a-y)(a_{12}+a_{21})}{x}\right],
\end{align*}
hence
\begin{align*}
\mathcal{L}h
&\le \alpha xv\left[-4\alpha  \lambda(x^2+(y-a)^2) +2(t_1-t)+6a_{11} +2 a_{22}- \frac{2(a-y)(a_{12}+a_{21})}{x}\right. \\
&\qquad \left. +2x B_1+2(y-a)B_2- \frac{B_1}{\alpha x}\bigl(1-e^{\alpha (x^2+(y-a)^2+(t-t_1)^2-a^2)}\bigr)\right].
\end{align*}
On the one hand, on $K$, we have $y< a/2$, hence $x^2+(y-a)^2 > a^2/4$. On the other hand,
using part of assumptions \eqref{mikef} along with $0\le a-y\le a$ and $0\le 1-e^{\alpha (x^2+(y-a)^2+(t-t_1)^2-a^2)}\le 1$ on $K$,
it follows that for $\alpha>1$ large enough,
\begin{align}
\mathcal{L}h
&\le  \alpha xv\left[-\alpha  \lambda a^2 +2(t_1-t)+6a_{11} +2 a_{22}+2Ca+2x B_1+2(y-a)B_2+C \right] &\notag \\
&\le  -\frac{\lambda\alpha^2a^2 xv}{2}<0\quad\hbox{ in $K$}. \label{LhNegative}
\end{align}

We now set $w=z+\eps h$ where $\eps$ is a positive constant to be chosen. By (\ref{LzNegative}) and (\ref{LhNegative}), we have
\begin{equation}\label{LwNegative}
\mathcal{L}w<0\quad\text{in $K$}.
\end{equation}
Denote $M=\max_{\overline K}w\ge 0$. Since $\mathcal{L}$ is (uniformly) parabolic, 
by the usual proof of the maximum principle, it follows from (\ref{LwNegative}) that $w$ cannot attain the value $M$ 
in $K$ (observe that  for each $s\in [\tau_3,t_1]$, the section $K\cap\{t=s\}$ is an open, possibly empty, subset of $\R^2$). To show $M=0$ (for sufficiently small $\eps>0$), it thus suffices to verify that 
$w\le 0$ on  $\partial_PK=\partial K\setminus (K\cap\{t=t_1\})$. We have $\partial_PK=\Gamma_1\cup\Gamma_2$, where
$\Gamma_1=\partial K_1\cap \overline K_2$  
and $\Gamma_2=\partial K_2\cap K_1$.  

On $\Gamma_1$ we have either 
$$x^2+(y-a)^2+(t_1-t)^2=a^2\quad\text{or}\quad x=0,$$
so that $h=0$ and $z\le 0$, 
hence $w\le 0$.
Next observe that on $\Gamma_2$ we have 
$$x^2+(y-a)^2+(t_1-t)^2<a^2 \quad\text{and}\quad x^2+y^2=a^2/4,$$
hence $\tau_1<\tau_3\le t\le t_1$ and $a/8<y<a/2$ (in other words, $(x,y)$ is ``far'' from the corners of $\hat D$).
Therefore, by the Hopf boundary point lemma \cite[Theorem~6~p.~174]{PW} and the strong maximum principle, 
there exists $c>0$ (independent of $t_1$), such that $z\le -cx$ on $\Gamma_2$.
Choosing $\eps\in (0,c)$, we then have
$w\leq -c x+\eps x<0$ on $\Gamma_2$.

We have thus proved that $M=0$ that is, $w\le 0$ in $K$. 
Letting $\tilde a:=a/(2\sqrt{2})$ and noting that $\{0<x\le y<\tilde a\}\times\{t_1\}\subset K$, we get
\begin{align*}
z(x,y,t_1)
&\le -\eps h(x,y,t_1)=-\eps x e^{-\alpha a^2}\bigl(e^{\alpha(a^2-x^2-(y-a)^2)}-1\bigr) \\
&\le -\eps \alpha e^{-\alpha a^2}x \bigl(a^2-x^2-(y-a)^2\bigr)
=-\eps \alpha e^{-\alpha a^2} x \bigl(2ay-x^2-y^2\bigr)\\
&\le -a\eps \alpha e^{-\alpha a^2} xy\quad\hbox{ for $0<x\le y<\tilde a$.}
\end{align*}

Now exchanging the roles of $x,y$ and noticing that the assumptions \eqref{mikef} are symmetric in $x,y$,
the conclusion already obtained guarantees that also
$z(x,y,t_1)\le -a\eps \alpha e^{-\alpha a^2} xy$ for $0<y\le x<\tilde a$,
hence \eqref{conclCorner} in $(0,\tilde a)^2\times [t_0,\tau_2)$.
The extension to the remaining part of the rectangle $(0, X_1)\times (0, Y_1)$
(away from the corner $(0,0)$) follows from the Hopf boundary lemma and the strong maximum principle.
\end{proof}

\noindent
\vspace{0.2cm} \\
Universit\'e Paris 13, Sorbonne Paris Cit\'e, \\
Laboratoire Analyse, G\'eom\'etrie et Applications, CNRS (UMR 7539), \\
93430 Villetaneuse, France. \\
E-mail: attouchi@math.univ-paris13.fr, souplet@math.univ-paris13.fr

\end{document}